
\documentclass[12pt,oneside]{amsart}
\usepackage{amscd,amsfonts,amsmath,amsthm,amssymb,verbatim,mathrsfs}
\usepackage[dvips]{graphicx}
\usepackage{tikz}
\usetikzlibrary{shapes,backgrounds}
\usetikzlibrary{calc}
\usetikzlibrary{matrix}
\usepackage{graphics,hyperref}
\usepackage{enumitem}
\usepackage{psfrag}
\usepackage{pstcol,pst-plot,pst-3d}
\usepackage{latexsym}
\usepackage{pstcol,pst-plot,pst-3d}
\usepackage{multicol}
\usepackage[normalem]{ulem} 

\psset{unit=0.7cm,linewidth=0.8pt,arrowsize=2.5pt 4}

\newpsstyle{fatline}{linewidth=1.5pt}
\newpsstyle{fyp}{fillstyle=solid,fillcolor=verylight}
\definecolor{verylight}{gray}{0.97}
\definecolor{light}{gray}{0.9}
\definecolor{medium}{gray}{0.85}

%
%
%
\def\NZQ{\Bbb}               
\def\NN{{\NZQ N}}

\def\ZZ{{\NZQ Z}}

%
%
\def\frk{\frak}               

\def\Phi{{\frk n}}
\def\Phi{{\frk N}}
%
\def\MC{{\mathcal C}}

\def\ME{{\mathcal E}}

\def\MH{{\mathcal H}}
\def\MS{{\mathcal S}}
\def\MU{{\mathcal U}}

\def\Gr{{\mathcal Gr}}
%
\def\ub{{\bold u}}
\def\vb{{\bold v}}
\def\ab{{\bold a}}

\def\cb{{\bold c}}

\def\opn#1#2{\def#1{\operatorname{#2}}} 
%
\opn\chara{char} \opn\length{\ell} \opn\pd{pd} \opn\rk{rk}
\opn\projdim{proj\,dim} \opn\injdim{inj\,dim} \opn\rank{rank}
\opn\depth{depth} \opn\grade{grade} \opn\height{height}
\opn\embdim{emb\,dim} \opn\codim{codim} \opn\sgn{sgn}

\opn\Tr{Tr} \opn\bigrank{big\,rank}
\opn\superheight{superheight}\opn\lcm{lcm}
\opn\trdeg{tr\,deg}
\opn\reg{reg} \opn\lreg{lreg} \opn\ini{in} \opn\lpd{lpd}
\opn\size{size}\opn\bigsize{bigsize}
\opn\cosize{cosize}\opn\bigcosize{bigcosize}
\opn\sdepth{sdepth}\opn\sreg{sreg}
\opn\link{link}\opn\fdepth{fdepth} \opn\generic{generic}
%
\opn\div{div} \opn\Div{Div} \opn\cl{cl} \opn\Cl{Cl} \opn\Cor{Cor}
%
%
\opn\Spec{Spec} \opn\Supp{Supp} \opn\supp{supp} \opn\Sing{Sing}
\opn\Ass{Ass} \opn\Min{Min}\opn\Mon{Mon} \opn\dstab{dstab} \opn\astab{astab}
%
%
\opn\Ann{Ann} \opn\Rad{Rad} \opn\Soc{Soc} 
%
%
\opn\Im{Im} \opn\Ker{Ker} \opn\Coker{Coker} \opn\Am{Am}
\opn\Hom{Hom} \opn\Tor{Tor} \opn\Ext{Ext} \opn\End{End}
\opn\Aut{Aut} \opn\id{id} \opn\span{span}

\opn\nat{nat}
\opn\pff{pf}
\opn\Pf{Pf} \opn\GL{GL} \opn\SL{SL} \opn\mod{mod} \opn\ord{ord}
\opn\Gin{Gin} \opn\Hilb{Hilb}\opn\sort{sort}
%
%
\opn\aff{aff} \opn\con{conv} \opn\relint{relint} \opn\st{st}
\opn\lk{lk} \opn\cn{cn} \opn\core{core} \opn\vol{vol}
\opn\link{link} \opn\star{star}\opn\lex{lex} 
\opn\gr{gr}

%
%

\def\pot#1#2{#1[\kern-0.28ex[#2]\kern-0.28ex]}

%
%
\opn\dirlim{\underrightarrow{\lim}}
\opn\inivlim{\underleftarrow{\lim}}
%
%
%

%
%

\def\Implies{\ifmmode\Longrightarrow \else
        \unskip${}\Longrightarrow{}$\ignorespaces\fi}
\def\implies{\ifmmode\Rightarrow \else
        \unskip${}\Rightarrow{}$\ignorespaces\fi}
\def\iff{\ifmmode\Longleftrightarrow \else
        \unskip${}\Longleftrightarrow{}$\ignorespaces\fi}

\let\:=\colon
\newtheorem{Theorem}{Theorem}[section]

\newtheorem{Corollary}[Theorem]{Corollary}
\newtheorem{Proposition}[Theorem]{Proposition}
\newtheorem{Remark}[Theorem]{Remark}

\newtheorem{Example}[Theorem]{Example}

\newtheorem{Definition}[Theorem]{Definition}

%
%
\let\epsilon\varepsilon
\let\kappa=\varkappa
%
%
\textwidth=15cm \textheight=22cm \topmargin=0.5cm
\oddsidemargin=0.5cm \evensidemargin=0.5cm \pagestyle{plain}
\setlength\marginparwidth{70pt}
\makeatletter


%
%
\def\qed{\ifhmode\textqed\fi
      \ifmmode\ifinner\quad\qedsymbol\else\dispqed\fi\fi}
\def\textqed{\unskip\nobreak\penalty50
       \hskip2em\hbox{}\nobreak\hfil\qedsymbol
       \parfillskip=0pt \finalhyphendemerits=0}
\def\dispqed{\rlap{\qquad\qedsymbol}}

%
\opn\dis{dis}
\def\pnt{{\raise0.5mm\hbox{\large\bf.}}}

\opn\Lex{Lex}



\begin{document}

\title{ Hypergraph encodings of arbitrary toric ideals }

\author{Sonja Petrovi\'c, Apostolos Thoma, Marius Vladoiu}

\address{Sonja Petrovi\'c, Department of Applied Mathematics,  Illinois Institute of Technology, Chicago 60616, USA}
\email{sonja.petrovic@iit.edu}

\address{Apostolos Thoma, Department of Mathematics, University of Ioannina, Ioannina 45110, Greece} 
\email{athoma@uoi.gr}

\address{Marius Vladoiu, Faculty of Mathematics and Computer Science, University of Bucharest, Str. Academiei 14, Bucharest, RO-010014, Romania, and}
\address{Simion Stoilow Institute of Mathematics of Romanian Academy, Research group of the project PN-II-RU-TE-2012-3-0161, P.O.Box 1--764, Bucharest
014700, Romania}
\email{vladoiu@fmi.unibuc.ro}

\subjclass[2010]{14M25, 13P10, 05C65, 13D02} 
\keywords{Toric ideals, Graver basis, universal Gr\"obner basis, hypergraphs, minimal generating sets, resolutions}

\begin{abstract}
Relying on the combinatorial classification of toric ideals using their bouquet structure, we focus on toric ideals of hypergraphs 
and study how they relate to general toric ideals. 
We show that hypergraphs exhibit a surprisingly general behavior: the toric ideal associated to any general matrix  can be encoded by that of a $0/1$ matrix, while preserving the essential combinatorics of the original ideal. We provide two universality results about the unboundedness of degrees of various generating sets: minimal, Graver, universal Gr\"obner bases, and indispensable binomials. Finally, we provide a polarization-type operation for arbitrary positively graded toric ideals, which preserves all the combinatorial signatures and the homological properties of the original toric ideal. 
\end{abstract}
\maketitle

\section*{Introduction} 
Toric ideals associated to $0/1$ matrices  occupy a special place in the world of toric ideals due to their applications to biology \cite{SS}, algebraic statistics \cite{SaSt}, integer programming \cite{DHOW}, matroid theory \cite{LM}, combinatorics \cite{PeSt}. 
They are, by definition, incidence matrices of hypergraphs, and in the special case when there are only two nonzero entries in each column they are the incidence matrices  of graphs. There is an abundant literature for toric ideals of graphs, including \cite{OH1,RTT,TT,Vi}, and the book \cite{Vill}, while toric ideals of hypergraphs were recently studied in \cite{GP} and \cite{PeSt}. 

Over the last decades, the rich combinatorial structure of toric ideals has been an anchor for many investigations of their properties.
 With this literature in mind in \cite{PTV} we defined and studied the \emph{bouquet ideal} of an integer matrix $A$ and its relationship to the combinatorial and algebraic structure of the toric ideal 
 encoded by $A$. Bouquets of a matrix naturally arise from an oriented matroid that records the dependencies from the Gale transform of $A$.  
Specifically, in \cite{PTV} we showed that the bouquets of a matrix capture the essential combinatorics of the corresponding toric ideal, leading to a classification of toric ideals with respect  to the bouquet structure. In addition, the bouquet construction represented the key step for a combinatorial classification of strongly robust toric ideals and provides a technique for constructing infinitely many examples of the prominent classes of toric ideals with rich combinatorics, due to equality of various bases, that include unimodular (\cite{BPS}), generic (\cite{PS}) and robust ideals (\cite{BR}). 

In light of these results, here we focus on toric ideals of $0/1$ matrices and study their relationship to general toric ideals. Similarly to how the usual reference to  (Markov, Gr\"obner, Graver) bases of matrices refers to bases of corresponding toric ideals, throughout the paper we refer to bases of hypergraphs when we think of bases of toric ideals of incidence matrices of those hypergraphs. We refer the reader to \cite{St,PTV} for general definitions of the above-mentioned bases together with the circuits and indispensable binomials, to \cite{St,CTV} for some equivalent algebraic descriptions, and to \cite{St,CTV2} for the general hierarchy among them. As in \cite{PTV}, we will denote the Graver basis of the matrix $A$ by $\Gr(A)$, its set of circuits by $\MC(A)$, and we recall that since any element of $\Ker_{\ZZ}(A)$ can be written as a (conformal) sum of elements from $\Gr(A)$ then one can say that the essential combinatorial information of the toric ideal of $A$ is provided by $\Gr(A)$. For the remainder of the introduction, we summarize and put into perspective our two main results. 

The first main result says, roughly speaking, that  the essential combinatorics of \emph{arbitrary} toric ideals is encoded by  almost $3$-uniform hypergraphs, that is,  hypergraphs with edges of cardinality at most $3$ and having at least one such edge:

\smallskip
\noindent\textbf{Theorem 1} [Theorem \ref{worse}: Hypergraph encodings of arbitrary toric ideals].
\textit{Given any integer matrix $A$, there exists an almost $3$-uniform hypergraph $\MH=(V,\ME)$ such that the toric ideal of the hypergraph $I_{\MH}$ is strongly robust and there is a bijective correspondence between the elements of $\Ker_{\ZZ}(A)$, $\Gr(A)$, and $\mathcal C(A)$ and  $\Ker_{\ZZ}(\MH)$, $\Gr(\MH)$, and $\mathcal C(\MH)$, respectively.}
\smallskip

Thus toric ideals of almost $3$-uniform hypergraphs, that is toric ideals of $0/1$-matrices with at most three 1's on each column are ``as complicated'' as \emph{any arbitrary} toric ideals with respect to its essential combinatorics, the Graver basis. Recall that if $H$ is a $0/1$ matrix, the toric ideal $I_{H}$ is positively graded, that is, $\Ker_{\ZZ}(H)\cap\NN^n=\{\bf 0\}$. In interpreting this result, one has to take into account that by `arbitrary toric ideals' we do not mean arbitrarily \emph{positively graded} toric ideals, but arbitrary in the complete sense. In general, a non-positively graded toric ideal has infinitely many minimal generating sets of different cardinalities and no indispensable binomials, see \cite[Theorems 4.18, 4.19]{CTV1}, and its Graver basis represents the best combinatorial information one could have on such a toric ideal. However, the fact that Graver basis of any toric ideal is in a (precise defined way) one-to-one correspondence to the Graver basis of a hypergraph shouldn't mislead the reader that the latter one is easy to describe. Here, it is important to note that while the Graver bases of toric ideals of graphs have very special form and support structure and are completely understood, see \cite{RTT, TT}, those of hypergraphs can be quite complicated (\cite{PeSt}) and not so many things are known. Under these circumstances, ``controlling" the essential combinatorics of an arbitrary toric ideal by toric ideals of hypergraphs, Theorem 1 is the best one could hope for. 

Theorem 1 has some interesting consequences, two of which we single out as universality results for almost $3$-uniform hypergraphs. These are also presented in Section~\ref{sec:complexity} (Corollaries~\ref{universality_markov} and~\ref{universality}) along with comments  relating them to \cite{DO}.  

\smallskip

The second main result provides  a  polarization-type operation, saying that if $I_A$ is positively graded then there exists a \emph{stable} toric ideal of a hypergraph $\MH$ of same combinatorial complexity as $I_A$. Stable toric ideals, also introduced and extensively studied in \cite{PTV}, are those whose all of the bouquets are non-mixed, or in other words for which passing to the bouquet ideal preserves all combinatorial information and, in the positively graded case, even the homological information. 

\smallskip
\noindent\textbf{Theorem 2} [Theorem \ref{general_complexity}: Stable hypergraphs and positively graded toric ideals]. 
\textit{Let $I_A$ be an arbitrary positively graded nonzero toric ideal. Then there exists a hypergraph $\MH$ such that there is a bijective correspondence between the Graver bases, all minimal Markov bases, all reduced Gr\"obner bases, circuits, and indispensable binomials of $I_A$ and $I_{\MH}$. Moreover the minimal graded free resolution of $I_A$ can be obtained from the corresponding minimal graded free resolution of $I_{\MH}$ and viceversa, and thus all homological data of $I_A$ is inherited from $I_{\MH}$.}
\smallskip

Note by comparison to Theorem 1 that since the toric ideal is positively graded all of the combinatorics, that is all the distinguished special sets, of the toric ideal is encoded by the toric ideal of hypergraph. The great bonus offered by this construction is that one can pass from an arbitrary matrix $A$ with $\Ker_{\ZZ}(A)\cap\NN^n=\{\bf 0\}$ to a $0/1$ matrix $A_{\MH}$ by preserving all the homological data, thus this theorem might be also regarded as a polarization-type operation, see Remark~\ref{homological_data}. Theorem 2 also has some interesting consequences: in particular, it implies that any classification problem about arbitrary positively graded toric ideals involving equality of bases can be reduced to a problem about a toric ideal of a hypergraph defined by a $0/1$-matrix. For example, see the conjecture of Boocher et al. \cite[Question 6.1]{BBDLMNS} discussed at the end of Section~\ref{sec:complexity2} in Remark~\ref{robust_conj}.

\smallskip

The technical backbone of our results are  special types of bouquets for $0/1$ matrices, called bouquets with bases, that encode the basic building blocks of Graver bases elements for hypergraphs. Section~\ref{sec:HypergraphsAndBouquets} offers a more technical motivation for this construction and studies bouquets of $0/1$ matrices in detail. Two main definitions are those of a bouquet with basis and a monomial walk (Definitions~\ref{defi_bouquet} and~\ref{walk}). We classify such bouquets in Theorem~\ref{bouq_basis_classif}. There are two technical applications, which solve two problems for hypergraphs:  Theorem~\ref{connected_sunflower} offers a structural result generalizing \cite{PeSt}, while Proposition~\ref{graver_not_ugb} generalizes a result from \cite{DST} and \cite{TT} from graphs to uniform hypergraphs. In particular, Theorem~\ref{connected_sunflower} and Corollary~\ref{no_free} represent the key ingredients in the construction of the 
hypergraph $\MH$ from Theorem 1. For the convenience of the reader, we begin by revisiting the bouquet constructions and the basic terminology from \cite{PTV} next.


 \section{ Revisiting the bouquet construction } 
\label{sec:BouqDec}

  Let $K$ be a field and $A=[\ab_1,\dots,\ab_n]\in\ZZ^{m\times n}$ be an integer matrix of rank $r$ with the set of column vectors $\{\ab_1,\ldots,\ab_n\}$. We recall that the toric ideal of $A$ is the ideal $I_A\subset K[x_1,\ldots,x_n]$ given by $$I_A=({\bf x}^{{\bf u}^+}-{\bf x}^{{\bf u}^-}: {\bf u}\in\Ker_{\ZZ}(A)),$$
where ${\bf u}={\bf u}^+-{\bf u}^-$ is the unique expression of an integral vector ${\bf u}$ as a difference of two non-negative integral vectors with disjoint support, see \cite[Section 4]{St}. To every integer matrix $A$ one can associate its Gale transform $G(A)$, which is   the $n\times (n-r)$ matrix whose columns span the lattice $\Ker_{\ZZ}(A)$. We will denote the set of ordered row vectors of the matrix $G(A)$ by $\{G({\bf a}_1), \dots, G({\bf a}_n)\}$. The vector ${\bf a}_i$ is called {\em free} if its Gale transform $G({\bf a_i})$ is equal to the zero vector, which means that $i$ is not contained in the support of any minimal  generator of the toric ideal $I_A$, or any element in the Graver basis. A vector which is not free will be called non-free.

\begin{Definition}[{\cite[Definition 1.1]{PTV}}]  
\label{bouq_def}
\rm  The {\em bouquet graph} $G_A$ of $I_A$ is the graph on vertices $\{{\bf a}_1,\dots, {\bf a}_n\}$ whose edge set $E_A$ consists of those $\{\ab_i,\ab_j\}$ for which $G(\ab_i)$ is a rational multiple of $G(\ab_j)$ and vice versa. The connected components of the graph $G_A$ are called {\em bouquets}.
\end{Definition}

It follows from the definition that the free vectors of $A$ form one bouquet, which we call the {\em free bouquet} of $G_A$. A bouquet $B$ which is not free is called non-free. The non-free bouquets are of two types: {\em mixed} and {\em non-mixed}. A non-free bouquet is mixed if contains an edge $\{\ab_i,\ab_j\}$ such that $G(\ab_i)=\lambda G(\ab_j)$ for some $\lambda<0$, and is non-mixed if it is either an isolated vertex or for all of its edges $\{\ab_i,\ab_j\}$ we have $G(\ab_i)=\lambda G(\ab_j)$ with $\lambda>0$, see \cite[Lemma 1.2]{PTV}. By slight abuse of notation, we identify vertices of $G_A$ with their labels; that is, $\ab_i$ will be used to denote vectors in the context of $A$ and $M_A$, and vertices in the context of $G_A$.    

\begin{Example}\label{working_example}
{\em Let $A$ be the matrix 
\[
\left( \begin{array}{ccccccc}
3 & 0 & 0 & 4 & 5 & 0 & 1\\
1 & 1 & 0 & 4 & 5 & 0 & 2\\
3 & 0 & 1 & 0 & 0 & 0 & 0\\
7 & 1 & 2 & 4 & 3 & 1 & 1\\
6 & 0 & 2 & 0 & 0 & 0 & 1
\end{array} \right).
\]
A basis for $\Ker_{\ZZ}(A)$ is given by $(1,2,-3,3,-3,-6,0)$ and $(1,2,-3,-2,1,2,0)$. Thus $G(\ab_1)=(1,1)$, $G(\ab_2)=(2,2)$, $G(\ab_3)=(-3,-3)$, $G(\ab_4)=(3,-2)$, $G(\ab_5)=(-3,1)$, $G(\ab_6)=(-6,2)$ and $G(\ab_7)=(0,0)$. Therefore the bouquet graph $G_A$ with the vertex set $\{\ab_1,\ldots,\ab_7\}$ consists of the  four  bouquets $B_1,\ldots,B_4$ depicted below. 

\begin{figure}[hbt]
\begin{center}
\psset{unit=1cm}
\begin{pspicture}(3.75,1.5)(8,4)
\rput(1,3){$\bullet$}
\rput(2,2.25){$\bullet$}
\rput(2,3.75){$\bullet$}
\rput(1,2.6){$\ab_1$}
\rput(2.2,2){$\ab_2$}
\rput(2.2,4){$\ab_3$}
\rput(2,1.5){$\bf{B_1}$}
\rput(1.5,2.25){$+$}
\rput(1.35,3.55){$-$}
\rput(2.3,3){$-$}
\psline[linewidth=0.6pt](2,2.25)(2,3.75)
\psline[linewidth=0.6pt](1,3)(2,2.25)
\psline[linewidth=0.6pt](1,3)(2,3.75)
\rput(4.5,3){$\bullet$}
\rput(7,2.25){$\bullet$}
\rput(7,3.75){$\bullet$}
\rput(4.5,2.6){$\ab_4$}
\rput(7.2,2){$\ab_5$}
\rput(7.2,4){$\ab_6$}
\rput(4.5,1.5){$\bf{B_2}$}
\rput(7.2,1.5){$\bf{B_3}$}
\rput(7.3,3){$+$}
\psline[linewidth=0.6pt](7,2.25)(7,3.75)
\rput(9.5,3){$\bullet$}
\rput(9.5,2.6){$\ab_7$}
\rput(9.5,1.5){$\bf{B_4}$}
\end{pspicture}
\end{center}
\label{Fig1}
\end{figure}

 Here, $B_1$ is mixed, $B_3$ is non-mixed, $B_2$ is non-mixed (since $G(\ab_4)\neq (0,0)$) and $B_4$ is the free bouquet (since $G(\ab_7)=(0,0)$).}
\end{Example}

For each bouquet $B$ there exist some bouquet-index-encoding vectors $\cb_B$ and $\ab_B$ which record the bouquet's  types and linear dependencies. For the complete technical definition of $\cb_B$ we refer the reader to page 9 of \cite{PTV}; here we simply summarize how to compute these vectors and offer an example below. If the bouquet $B$ is free then we set $\cb_B\in\ZZ^n$ to be any nonzero vector such that $\supp(\cb_B)=\{i: \ab_i\in B\}$ and with the property that the first nonzero coordinate is positive. For a non-free bouquet $B$ of $A$, consider the Gale transforms of the elements in $B$. All the elements are nonzero and pairwise linearly dependent, therefore there exists a nonzero coordinate $j$ in all of them. Let $g_j=\gcd(G({\bf a}_i)_j| \ {\bf a}_i\in B)$ and fix the smallest integer $i_0$ such that ${\bf a}_{i_0}\in B$. Then ${\bf c}_B$  is the vector in $\ZZ^n$ whose $i$-th coordinate  is $0$ if ${\bf a}_i\notin B$, and is $\varepsilon_{i_0j}G({\bf a}_i)_j/g_j$ if ${\bf a}_i \in B$, where $\varepsilon_{i_0j}$ represents the sign of the integer $G({\bf a}_{i_0})_j$. With $\cb_B$ introduced the vector $\ab_B$ (see \cite[Definition 1.7]{PTV}), is defined as $\ab_B=\sum_{i=1}^n (c_B)_i\ab_i\in\ZZ^m$, where $(c_B)_i$ is the $i$-th component of the vector $\cb_B$. 

Detecting the type of a non-free bouquet $B$ is important for our further considerations and we can read this information from the corresponding $\cb_B$. Indeed, if $B$ is a  non-free bouquet of $A$, then $B$ is a mixed bouquet if and only if the vector ${\bf c}_B$ has a negative and a positive coordinate, see \cite[Lemma 1.6]{PTV}. Hence the non-free bouquet is non-mixed if and only if the vector $\cb_B$ has all nonzero coordinates positive, taking into account that by definition the first nonzero coordinate of $\cb_B$ is positive. 

The matrix $A_B$ whose column vectors are the vectors $\ab_B$ corresponding to the bouquets of $A$ is called the {\em bouquet matrix} of $A$.  There is a surprising general connection between the matrix $A$ and its bouquet matrix $A_B$ summarized in the following theorem.    

\begin{Theorem}[{\cite[Theorem A]{PTV}}]
\label{all_is_well} 
Let $A=[{\bf a}_1,\dots,{\bf a}_n]\in\ZZ^{ m\times n}$ and its bouquet matrix $A_B=[\ab_{B_1},\dots,\ab_{B_s}]$. There is a bijective correspondence between the elements  
of $\Ker_{\ZZ}(A)$ in general,  and $\Gr(A)$ and $\mathcal C(A)$ in particular,  and the elements of $\Ker_{\ZZ}(A_B)$, and $\Gr(A_B)$ and $\mathcal C(A_B)$, respectively. More precisely, this correspondence is defined as follows: for ${\bf u}=(u_1,\ldots,u_{s})\in\Ker_{\ZZ}(A_B)$ then $B({\bf u})={\bf c}_{B_1}u_1+\cdots+{\bf c}_{B_s}u_s\in\Ker_{\ZZ}(A)$.
\end{Theorem}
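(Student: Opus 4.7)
\medskip
\noindent\textit{Proof plan.}
The plan is to show that the explicit map $B\colon \ZZ^s\to\ZZ^n$, $B(\ub)=u_1\cb_{B_1}+\cdots+u_s\cb_{B_s}$, is the desired bijection, and then verify separately that it sends $\Gr(A_B)$ to $\Gr(A)$ and $\MC(A_B)$ to $\MC(A)$.

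First, I would check that $B$ does land in $\Ker_{\ZZ}(A)$ whenever $\ub\in\Ker_{\ZZ}(A_B)$. By the defining identity $\ab_{B_i}=\sum_{j=1}^n (c_{B_i})_j\,\ab_j = A\,\cb_{B_i}$, one computes
$$A\cdot B(\ub)=\sum_{i=1}^{s} u_i\,A\cb_{B_i}=\sum_{i=1}^{s}u_i\,\ab_{B_i}=A_B\,\ub=0.$$
Injectivity of $B$ is immediate from the fact that the supports $\supp(\cb_{B_i})$ are pairwise disjoint (they are precisely the index sets of the bouquets): the equation $\sum u_i\cb_{B_i}=0$ forces $u_i\cb_{B_i}=0$ on each block, and since $\cb_{B_i}\ne 0$, each $u_i=0$.

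The subtler part is surjectivity. Here I would exploit the Gale transform: since the columns of $G(A)$ form a $\ZZ$-basis of $\Ker_{\ZZ}(A)$, every $\vb\in\Ker_{\ZZ}(A)$ equals $G(A)\wb$ for some $\wb\in\ZZ^{n-r}$, so $v_j=G(\ab_j)\cdot\wb$. Fix a non-free bouquet $B_i$; by definition of a bouquet, for all $\ab_j,\ab_{j'}\in B_i$ the row vectors $G(\ab_j),G(\ab_{j'})$ are parallel, hence the coordinates $v_j$ restricted to $B_i$ are proportional with the same ratios as $(\cb_{B_i})_j$. The gcd normalization built into the definition of $\cb_{B_i}$ (i.e.\ dividing by $g_j=\gcd\{G(\ab_i)_j:\ab_i\in B\}$) is exactly what guarantees the proportionality constant $u_i$ is an \emph{integer}, not merely rational; on the free bouquet any integer $u$ works. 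Collecting the $u_i$ gives $\vb=B(\ub)$, and $A\vb=0$ then translates into $A_B\ub=0$, finishing the bijection on kernels.

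Finally, I would transfer the bijection to Graver bases and circuits. The key observation is that the assignment $\ub\mapsto B(\ub)$ preserves conformality: since the $\cb_{B_i}$ have pairwise disjoint supports, $B(\ub)$ and $B(\vb)$ are conformal on each bouquet block iff $u_i\cb_{B_i}$ and $v_i\cb_{B_i}$ are, and the latter reduces to the sign condition $u_iv_i\ge 0$ and $|u_i|\le|v_i|$ (the internal sign pattern of $\cb_{B_i}$ is common to both sides). Consequently, the $\sqsubseteq$-minimal nonzero elements of $\Ker_{\ZZ}(A_B)$ correspond precisely to the $\sqsubseteq$-minimal nonzero elements of $\Ker_{\ZZ}(A)$, giving the Graver bijection. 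For circuits, one checks that $\supp(B(\ub))$ is the union $\bigcup_{u_i\ne 0}\supp(\cb_{B_i})$, so minimal-support elements of $\Ker_{\ZZ}(A_B)$ correspond to minimal-support elements of $\Ker_{\ZZ}(A)$, and a final gcd normalization handles the coprimality condition. The main delicate point I anticipate is the surjectivity step, specifically the integrality of the proportionality constant $u_i$ on each bouquet: this hinges entirely on matching the gcd convention in the definition of $\cb_B$ with the integral structure of $\Ker_{\ZZ}(A)$, and is where the precise normalization on page~9 of \cite{PTV} is essential.
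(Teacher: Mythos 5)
The paper does not actually prove this statement: it is imported verbatim as Theorem~A of \cite{PTV}, so there is no in-paper argument to compare yours against. That said, your proposal is correct and is the natural proof of the quoted result: well-definedness follows from $\ab_{B_i}=A\cb_{B_i}$, injectivity from the pairwise disjointness of the supports $\supp(\cb_{B_i})$, surjectivity from the Gale-transform proportionality within each bouquet together with the primitivity of $\cb_{B_i}$ (which is exactly what the gcd normalization buys you), and the fact that $B$ is an isomorphism both for the conformal order $\sqsubseteq$ and for support containment, which transfers Graver bases and circuits. Two small corrections. First, in the surjectivity step the admissible coefficient on the free bouquet is \emph{only} $u=0$, not ``any integer'': every element of $\Ker_{\ZZ}(A)$ vanishes on the free coordinates (since $G(\ab_j)={\bf 0}$ there) while $\cb_{B_{\mathrm{free}}}\neq{\bf 0}$, so a nonzero coefficient would break the identity $\vb=B(\ub)$. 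Second, no ``final gcd normalization'' is needed for circuits: each non-free $\cb_{B_i}$ is already primitive by construction, so the gcd of the entries of $B(\ub)$ equals $\gcd(u_1,\dots,u_s)$, and primitive vectors map to primitive vectors. Neither point is a genuine gap.
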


\begin{Example}[Example~\ref{working_example}, continued]
{\em
Let $A$ be the matrix from Example~\ref{working_example}. Let us compute the bouquet-index-encoding vectors $\ab_{B_i}$s and $\cb_{B_i}$s. For $\cb_{B_1}$ we can choose $j$ to be either $1$ or $2$, while $i_0=1$. Set $j=1$, then $g_1=\gcd(G(\ab_1)_1,G(\ab_2)_1,G(\ab_3)_1)$ and the nonzero coordinates of $\cb_{B_1}$ are
\[
(c_{B_1})_1=\varepsilon_{11}\frac{G(\ab_1)_1}{g_1}=1, (c_{B_1})_2=\varepsilon_{11}\frac{G(\ab_2)_1}{g_1}=2, (c_{B_1})_3=\varepsilon_{11}\frac{G(\ab_3)_1}{g_1}=-3.
\]
Thus the corresponding bouquet vector is $\cb_{B_1}=(1, 2, -3,0,0,0,0)$ and consequently $\ab_{B_1}=1\ab_1+2\ab_2-3\ab_3=(3,3,0,3,0)$. Similarly one obtains $\cb_{B_2}=(0, 0, 0,1,0,0,0)$, $\ab_{B_2}=\ab_4=(4,4,0,4,0)$, $\cb_{B_3}=(0, 0, 0,0,1,2,0)$, $\ab_{B_3}=1\ab_5+2\ab_6=(5,5,0,5,0)$, $\cb_{B_4}=(0, 0, 0,0,0,0,1)$ and $\ab_{B_4}=\ab_7$. Therefore we obtain the bouquet matrix $A_B=[\ab_{B_1},\ab_{B_2},\ab_{B_3},\ab_{B_4}]\in \ZZ^{5\times 4}$. Now, the correspondence from Theorem~\ref{all_is_well} works as follows. For example, to the vector $(2,1,-2,0)\in\Ker_{\ZZ}(A_B)$ corresponds the vector
\[
B((2,1,-2,0))=2\cb_{B_1}+1\cb_{B_2}-2\cb_{B_3}+0\cb_{B_4}=(2,4,-6,1,-2,-4,0)\in\Ker_{\ZZ}(A).
\]

Note that the bouquet ideal $I_{A_B}$ is also defined by the matrix 
\[
\left( \begin{array}{cccc}
3 & 4 & 5 & 0\\
0 & 0 & 0 & 1
\end{array} \right),
\]
and taking into account that $\ab_{B_4}$ is a free vector, $I_A$ is just the extension of the defining ideal of the monomial curve parametrized by $(t^3,t^4,t^5)$ in the polynomial ring in four indeterminates. 
}
\end{Example}

A {\em subbouquet} of $G_A$ is an induced subgraph of $G_A$. One easily sees that subbouqets are cliques and maximal ones are  the bouquets defined above. $A$ is said to have a \emph{subbouquet decomposition} if there exists a family of subbouquets $B_1,\ldots,B_t$ that is pairwise vertex-disjoint, the union of whose vertices  equals $\{\ab_1,\ldots,\ab_n\}$. A subbouquet decomposition always exists if we consider, for example, the subbouquet decomposition induced by all of the bouquets. We define vectors $\cb_B$ and $\ab_B$ for each subbouquet $B$ as we did for the bouquets. The matrix associated to such a subbouquet decomposition is called a subbouquet matrix. Crucially, Theorem \ref{all_is_well} is true even if we replace bouquets with proper subbouquets which form a subbouquet decomposition of $A$. 

\begin{Example}[Example~\ref{working_example}, continued]
\label{subbouquet_dec}
{\em The family of subbouquets $B'_1,\ldots,B'_5$ depicted below represents a subbouquet decomposition of $A$. Note that all of the non-free bouquets $B'_1,\ldots,B'_4$ are non-mixed. 
\begin{figure}[hbt]
\begin{center}
\psset{unit=1cm}
\begin{pspicture}(3.75,1.5)(8,4)
\rput(1,3){$\bullet$}
\rput(2,2.25){$\bullet$}
\rput(3.5,3){$\bullet$}
\rput(1,2.6){$\ab_1$}
\rput(2.2,2){$\ab_2$}
\rput(3.5,2.6){$\ab_3$}
\rput(2,1.5){$\bf{B'_1}$}
\rput(3.5,1.5){$\bf{B'_2}$}
\rput(1.5,2.25){$+$}
\psline[linewidth=0.6pt](1,3)(2,2.25)
\rput(5.2,3){$\bullet$}
\rput(7,2.25){$\bullet$}
\rput(7,3.75){$\bullet$}
\rput(5.2,2.6){$\ab_4$}
\rput(7.2,2){$\ab_5$}
\rput(7.2,4){$\ab_6$}
\rput(5.2,1.5){$\bf{B'_3}$}
\rput(7.2,1.5){$\bf{B'_4}$}
\rput(7.3,3){$+$}
\psline[linewidth=0.6pt](7,2.25)(7,3.75)
\rput(9.5,3){$\bullet$}
\rput(9.5,2.6){$\ab_7$}
\rput(9.5,1.5){$\bf{B'_5}$}
\end{pspicture}
\end{center}
\label{Fig2}
\end{figure}
}
\end{Example}
There is a natural question of whether there is an inverse construction to the one described in Theorem~\ref{all_is_well}: given a set of vectors $\ab_1,\ldots,\ab_s$ and $\cb_1,\ldots,\cb_s$ that can act as bouquet-index-encoding vectors can we construct a toric ideal $I_A$ whose $s$ (sub)bouquets are encoded by the given vectors? The answer is yes, comprised in the next theorem, and will be often exploited in the next sections.  

\begin{Theorem}[{\cite[Theorem B]{PTV}}]
\label{inverse_construction}
Let $\{\ab_1,\ldots,\ab_s\}\subset\ZZ^m$ be an arbitrary set of vectors. Let $\cb_1,\ldots,\cb_s$ be any set of primitive vectors, with $\cb_i\in\ZZ^{m_i}$ for some $m_i\geq 1$, each having full support and a positive first coordinate.  Then, there exists a matrix $A$ with the subbouquet decomposition $B_1,\ldots,B_s$, such that the $i^{th}$ subbouquet	is encoded by the following vectors: $\ab_{B_i}=(\ab_i,{\bf 0},\ldots,{\bf 0})$ and $\cb_{B_i}=({\bf 0},\ldots,\cb_i,\ldots,{\bf 0})$, where the support of $\cb_{B_i}$ is precisely in the $i^{th}$ block, of size $m_i$.
\end{Theorem}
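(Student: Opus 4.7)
The plan is to construct $A$ explicitly in block form. Let $n=m_1+\cdots+m_s$ and index the columns of $A$ by pairs $(i,j)$ with $1\le i\le s$ and $1\le j\le m_i$. Split the rows of $A$ into a \emph{primary} block of $m$ rows together with $s$ \emph{auxiliary} blocks of sizes $m_1-1,\ldots,m_s-1$ (blocks of size zero are omitted). Since $\cb_i$ is primitive, Bezout gives integers $d_{i,1},\ldots,d_{i,m_i}$ with $\sum_j c_{i,j}d_{i,j}=1$; place the vector $d_{i,j}\ab_i$ in the primary rows of column $(i,j)$. Still using primitivity, the sublattice $\cb_i^{\perp}\cap\ZZ^{m_i}$ is saturated of rank $m_i-1$, so one can pick $M_i\in\ZZ^{(m_i-1)\times m_i}$ whose rows form a $\ZZ$-basis of it; then $M_i\cb_i=\mathbf{0}$ and $\Ker_{\ZZ}(M_i)=\ZZ\cb_i$. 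Place the $j$-th column of $M_i$ into the auxiliary rows of column $(i,j)$ and put zeros in all other auxiliary blocks. When $m_i=1$ no auxiliary rows are needed and $\cb_i=(1)$.

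To verify the claim, first compute $\Ker_{\ZZ}(A)$. Decomposing $\ub\in\Ker_{\ZZ}(A)$ along the column blocks as $\ub=(\ub_1,\ldots,\ub_s)$ with $\ub_i\in\ZZ^{m_i}$, the auxiliary rows of block $i$ force $M_i\ub_i=\mathbf{0}$, hence $\ub_i=\mu_i\cb_i$ for some $\mu_i\in\ZZ$; substitution into the primary rows collapses via Bezout to $\sum_i\mu_i\ab_i=\mathbf{0}$. Thus
\[
\Ker_{\ZZ}(A)=\{(\mu_1\cb_1,\ldots,\mu_s\cb_s):\mu\in\ZZ^s,\ \textstyle\sum_i\mu_i\ab_i=\mathbf{0}\}.
\]
It follows that the rows of any Gale transform $G(A)$ indexed by the columns of block $i$ are proportional with ratios $c_{i,1}:\cdots:c_{i,m_i}$, so $B_1,\ldots,B_s$ defined by the column partition is a valid subbouquet decomposition of $A$. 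The bouquet sum $\sum_j c_{i,j}\ab_{(i,j)}$ equals $\ab_i$ in the primary rows by Bezout, equals $M_i\cb_i=\mathbf{0}$ in the block-$i$ auxiliary rows, and is zero in the remaining auxiliary blocks, giving $\ab_{B_i}=(\ab_i,\mathbf{0},\ldots,\mathbf{0})$.

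The main obstacle is then matching $\cb_{B_i}$ exactly to the gcd/sign convention recalled above. When $B_i$ is non-free, after selecting a coordinate $j_0$ where the block-$i$ Gale entries are all nonzero, the gcd $g_{j_0}$ equals the absolute value of the single common Gale scalar (since the $c_{i,j}$'s are coprime), and since $c_{i,1}>0$ the sign $\varepsilon_{i_0 j_0}$ cancels precisely to reproduce $c_{i,j}$ at position $(i,j)$; hence $\cb_{B_i}$ equals $\cb_i$ in the $i$-th block and zero elsewhere. When $B_i$ is free, the convention permits any vector with the correct support and positive first coordinate, and we simply choose $\cb_i$. Edge cases are straightforward: $\ab_i=\mathbf{0}$ forces $\cb_i$ extended by zeros to lie in $\Ker_{\ZZ}(A)$ so block $i$ is automatically non-free; $\ZZ$-linear dependencies among the $\ab_i$'s only enlarge $\Ker_{\ZZ}(A)$ without affecting the per-block proportionality of Gale transforms; and $m_i=1$ is handled trivially.
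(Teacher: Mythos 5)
The paper does not prove this statement itself --- it is imported verbatim from \cite{PTV} (Theorem B) --- so there is no in-paper argument to compare against; your construction is, however, correct and is essentially the generalized Lawrence matrix construction used in \cite{PTV} (and referred to by that name in the proof of Corollary~\ref{universality_markov}): Bezout coefficients against $\cb_i$ in the top block so that $\sum_j c_{ij}\ab_{(i,j)}=\ab_i$, and an integer matrix with $\Ker_{\ZZ}(M_i)=\ZZ\cb_i$ in the $i$-th auxiliary block, which pins down $\Ker_{\ZZ}(A)$ and hence the blockwise proportionality of the Gale rows. Your verification of the sign/gcd normalization of $\cb_{B_i}$ (using primitivity of $\cb_i$ and $c_{i1}>0$) and of the free-block and $\ab_i=\mathbf{0}$ edge cases is accurate, so nothing is missing.
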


As a consequence of Definition~\ref{bouq_def} and the fact there are  two types of non-free bouquets, one has the following straightforward combinatorial classification of toric ideals. A toric ideal may have: 1) all of its non-free bouquets non-mixed; 2) all of its non-free bouquets mixed, or 3) the non-free bouquets are either mixed or non-mixed. In the first case, the Theorem below says that all of the combinatorial information is preserved when passing from $I_A$ to $I_{A_B}$. In addition,   homological properties are also preserved; see \cite[Theorem 3.11]{PTV}.    

\begin{Theorem}[{\cite[Theorem C]{PTV}}]
\label{stable_toric} 
Let $I_A$ be a stable toric ideal, that is all of the non-free bouquets of $A$ are non-mixed. Then the bijective correspondence between the elements of $\Ker_{\ZZ}(A)$ and $\Ker_{\ZZ}(A_B)$ given by ${\bf u}\mapsto B({\bf u})$, is preserved when we restrict to any of the following sets: Graver basis, circuits, indispensable binomials, minimal Markov bases, reduced Gr\"obner bases (universal Gr\"obner basis).
\end{Theorem}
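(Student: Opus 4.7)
My plan would start from a structural observation that pins down how the map $B$ interacts with the sign decomposition under stability. Since every non-free bouquet $B_i$ is non-mixed, \cite[Lemma 1.6]{PTV} together with the convention that the first nonzero coordinate of $\cb_{B_i}$ is positive forces every such $\cb_{B_i}$ to have all coordinates non-negative (and strictly positive precisely on $B_i$). In addition, for a free bouquet $B_i$ every $\ub \in \Ker_{\ZZ}(A_B)$ satisfies $u_i = 0$, since the vertices of $B_i$ lie outside the support of any kernel element of $A$. Combined with the fact that the supports of $\cb_{B_1},\ldots,\cb_{B_s}$ are pairwise disjoint, these two facts yield the identities
\[
B(\ub)^+ \;=\; \sum_{u_i > 0} u_i\, \cb_{B_i}, \qquad B(\ub)^- \;=\; \sum_{u_i < 0} (-u_i)\, \cb_{B_i}
\]
for every $\ub \in \Ker_{\ZZ}(A_B)$. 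This block-separated sign decomposition would be the technical backbone of the entire argument.

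From it I would deduce that $B$ preserves conformality in both directions: $\vb \sqsubseteq \ub$ in $\Ker_{\ZZ}(A_B)$ if and only if $B(\vb) \sqsubseteq B(\ub)$ in $\Ker_{\ZZ}(A)$. The forward direction is immediate from the displayed identities. For the converse, I would invoke Theorem~\ref{all_is_well} to write any ${\bf w} \sqsubseteq B(\ub)$ as ${\bf w} = B(\vb)$ for some $\vb \in \Ker_{\ZZ}(A_B)$; then the disjoint supports and non-negativity of the $\cb_{B_i}$ translate the coordinatewise comparison ${\bf w} \sqsubseteq B(\ub)$ into the componentwise comparison $\vb \sqsubseteq \ub$. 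This already re-derives the $\Gr$ and $\mathcal{C}$ bijections of Theorem~\ref{all_is_well} and, more importantly, transfers the notion of ``minimal fiber generator'' between $A_B$ and $A$.

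For indispensable binomials and minimal Markov bases I would leverage the sign decomposition to transfer the fiber graph that characterizes these sets. The displayed identities imply that two exponent vectors in $\NN^n$ lie in the same $A$-fiber if and only if their preimages under the block-expansion map $\delta \mapsto \sum \delta_i \cb_{B_i}$ lie in the same $A_B$-fiber (this map is bijective on the relevant fibers because, coordinatewise within each bouquet, all entries are tied by $\cb_{B_i}$), and Graver moves pass through $B$ by the conformality preservation in the previous paragraph. The Markov connectivity graphs on the two sides are therefore isomorphic, and since minimal Markov bases and indispensable binomials are characterized entirely by this graph, both sets correspond bijectively under $B$.

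For reduced and universal Gr\"obner bases I would transfer term orders through the bouquet structure: a term order $\prec$ on $K[z_1,\ldots,z_s]$ lifts to one $\prec'$ on $K[y_1,\ldots,y_n]$ compatible with the bouquet partition (e.g. by weighting each $y_j$ with the $\prec$-weight of the $z_i$ whose bouquet contains $j$ and refining arbitrarily within each bouquet block), and the sign decomposition guarantees that $\prec'$-initial monomials of binomials $\yb^{B(\ub)^+}-\yb^{B(\ub)^-}$ in $I_A$ correspond under $B$ to $\prec$-initial monomials of $\zb^{\ub^+}-\zb^{\ub^-}$ in $I_{A_B}$. I expect the main obstacle to lie precisely here: one must check that lifting and restricting term orders through the bouquet partition are mutually inverse procedures at the level of reduced Gr\"obner bases, and that the lift interacts correctly with \emph{every} binomial in $I_A$ (not only those already singled out as Markov elements). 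Once this is established, the universal Gr\"obner basis correspondence follows by taking the union over all orders.
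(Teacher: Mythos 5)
This theorem is imported by the paper from \cite[Theorem C]{PTV} and is stated here without proof, so there is no in-paper argument to compare against; I can only assess your proposal on its own terms. Your structural starting point is exactly the right one and is what stability buys: since every non-free bouquet is non-mixed, each $\cb_{B_i}$ has all nonzero coordinates positive, the free-bouquet coordinate of any $\ub\in\Ker_{\ZZ}(A_B)$ vanishes, and the disjointness of the supports of the $\cb_{B_i}$ gives $B(\ub)^{+}=\sum_{u_i>0}u_i\cb_{B_i}$ and $B(\ub)^{-}=\sum_{u_i<0}(-u_i)\cb_{B_i}$. The two-way preservation of conformality then follows as you say, using Theorem~\ref{all_is_well} for the converse direction. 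This is a sound backbone.

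The one step you assert rather than prove, and which carries real weight, is the claim that the block-expansion map $\delta\mapsto\sum_i\delta_i\cb_{B_i}$ is \emph{bijective onto the relevant fibers}: injectivity is clear from disjoint supports, but surjectivity needs an argument, since a priori an $A$-fiber could contain exponent vectors in $\NN^n$ that are not block expansions, and then the fiber graphs would not be isomorphic and the Markov/indispensable transfer would collapse. The gap is fillable in one line with the tools you already have: if $A\alpha=A\,B(\ub)^{+}$ for $\alpha\in\NN^n$, then $\alpha-B(\ub)^{+}\in\Ker_{\ZZ}(A)$ equals $B(\vb)$ for some $\vb$ by Theorem~\ref{all_is_well}, so $\alpha=\sum_i(\max(u_i,0)+v_i)\cb_{B_i}$, and positivity of the nonzero entries of each $\cb_{B_i}$ together with $\alpha\geq 0$ forces each coefficient to be non-negative; hence $\alpha$ is a block expansion. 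You should state this explicitly, because everything downstream (isomorphism of the gcd-adjacency fiber graphs, hence minimal Markov bases and indispensables) rests on it. For the Gr\"obner basis part you correctly identify the delicate direction as the descent of an \emph{arbitrary} term order on $K[y_1,\ldots,y_n]$ to one on $K[z_1,\ldots,z_s]$; here the fiber-surjectivity fact again rescues you, since every element of a reduced Gr\"obner basis of $I_A$ lies in the Graver basis and hence is a $B$-image, and the restriction of the ambient order to the submonoid of block expansions is a well-founded multiplicative order. With those two points made explicit I would consider the argument complete.
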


This result holds in a more general setting, as explained in detail in \cite[Section 3]{PTV}. For example, even if $I_A$ is not stable, one cand find a (maximal) subbouquet decomposition such that all of its subbouquets are non-mixed, and whose subbouquet matrix $A_B$ has the property that the bijective correspondence between $\Ker_{\ZZ}(A)$ and $\Ker_{\ZZ}(A_B)$ from Theorem~\ref{stable_toric} is preserved. In particular, Example~\ref{subbouquet_dec} gives such a maximal subbouquet decomposition of the matrix $A$ from Example~\ref{working_example}.

The second class of toric ideals, described above, and whose bouquets are all mixed have the following nice property. 
\begin{Proposition}[{\cite[Corollary 4.4]{PTV}}]
\label{gen_no_free}
Suppose that every non-free bouquet of $A$ is mixed. Then $I_A$ is strongly robust,  i.e. the following sets coincide:
\begin{itemize}
\item the Graver basis of $A$,
\item the universal Gr{\"o}bner basis of $A$,
\item any reduced Gr{\"o}bner basis of $A$,
\item any minimal Markov basis of $A$.
\end{itemize}
\end{Proposition}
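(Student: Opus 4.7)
The plan is to show that under the hypothesis every element of the Graver basis $\Gr(A)$ is indispensable. Via the standard hierarchy
\[
\{\text{indispensable binomials}\} \subseteq \text{any minimal Markov basis} \subseteq \text{any reduced Gr\"obner basis} \subseteq \text{universal Gr\"obner basis} \subseteq \Gr(A),
\]
having all Graver elements indispensable forces all four sets in the statement to coincide with $\Gr(A)$, which is the desired strong robustness.

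I first observe that the hypothesis forces $I_A$ to be positively graded, so that indispensability is meaningful. Take nonzero ${\bf u}\in \Ker_\ZZ(A)\cap\NN^n$ and write ${\bf u}=B({\bf w})=\sum_i w_i{\bf c}_{B_i}$ via Theorem~\ref{all_is_well}. On every non-free bouquet $B_i$ the vector ${\bf c}_{B_i}$ has entries of both signs, so $w_i\ne 0$ would produce negative coordinates in ${\bf u}$. Hence $w_i=0$ on every non-free bouquet, while the free-bouquet coordinates of any element of $\Ker_\ZZ(A)$ are zero by definition of a free vector; so ${\bf u}={\bf 0}$, a contradiction.

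Now fix ${\bf u}=B({\bf w})\in \Gr(A)$ with ${\bf w}\in \Gr(A_B)$, and suppose toward a contradiction that there exists ${\bf v}\in \Ker_\ZZ(A)\setminus\{{\bf 0},\pm{\bf u}\}$ with ${\bf v}^+\le {\bf u}^+$ componentwise (the condition ${\bf v}^-\le {\bf u}^-$ is symmetric). Write ${\bf v}=B({\bf w}')$. The crux is a bouquet-wise sign bookkeeping that decisively uses the mixed hypothesis: within each non-free bouquet $B_i$, the supports of ${\bf c}_{B_i}^+$ and ${\bf c}_{B_i}^-$ are disjoint and both non-empty. Tracing how $w_i{\bf c}_{B_i}$ contributes to ${\bf u}^\pm$ and how $w'_i{\bf c}_{B_i}$ contributes to ${\bf v}^\pm$, and checking each sign combination for $(w_i,w'_i)$, one finds that ${\bf v}^+\le {\bf u}^+$ forces $w'_i$ to share the sign of $w_i$ (or be zero) and to satisfy $|w'_i|\le |w_i|$. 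Free coordinates contribute nothing to ${\bf u}$ or ${\bf v}$, so these relations hold for all $i$.

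Consequently ${\bf w}={\bf w}'+({\bf w}-{\bf w}')$ is a conformal decomposition inside $\Ker_\ZZ(A_B)$. Primitivity of ${\bf w}\in \Gr(A_B)$ forces one summand to vanish, giving ${\bf v}\in\{{\bf 0},{\bf u}\}$, both excluded. The remaining case ${\bf v}=-{\bf u}$ with ${\bf v}^+\le {\bf u}^+$ reduces to ${\bf u}^-\le {\bf u}^+$; by the disjoint supports of ${\bf u}^\pm$ this yields ${\bf u}^-={\bf 0}$, so ${\bf 0}\ne {\bf u}\in \NN^n$, contradicting positive gradedness. Hence ${\bf u}$ is indispensable. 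The only delicate step is the sign case analysis, which is routine yet genuinely depends on each active bouquet being mixed, as that is what rigidifies the $\pm$-structure of each Graver element along the bouquet partition of coordinates.
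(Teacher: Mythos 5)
This proposition is quoted in the paper from \cite[Corollary 4.4]{PTV} without proof, so there is no in-paper argument to compare against; in \cite{PTV} the statement is obtained by recognizing a matrix all of whose non-free bouquets are mixed as (a free extension of) a generalized Lawrence lifting of its bouquet matrix and then invoking the known strong robustness of such liftings. Your proof is a correct and genuinely different, self-contained route: you deduce positive gradedness from mixedness, then show that every $\ub=B({\bf w})\in\Gr(A)$ has fiber exactly $\{\ub^+,\ub^-\}$ by pulling a hypothetical witness $\vb$ with $\vb^+\le\ub^+$ back to ${\bf w}'\in\Ker_{\ZZ}(A_B)$ and using that on each mixed bouquet the sets $\supp(\cb_{B_i}^+)$ and $\supp(\cb_{B_i}^-)$ are disjoint and both non-empty to force $0\le w_i'\le w_i$ or $w_i\le w_i'\le 0$; primitivity of ${\bf w}$ then kills $\vb$. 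I checked the sign analysis and it works; that is indeed exactly where mixedness enters. Two points should be tightened rather than repaired: the displayed hierarchy is not literally true (a minimal Markov basis need not be contained in a reduced Gr\"obner basis), but your argument never uses that containment --- what you need, and what holds, is that each of the four sets is squeezed between the indispensable binomials and $\Gr(A)$; and the two standard facts you invoke silently, namely that the nonexistence of $\vb\notin\{{\bf 0},\pm\ub\}$ with $\vb^+\le\ub^+$ forces the fiber of $\ub$ to be $\{\ub^+,\ub^-\}$, and that a binomial with a two-element fiber and coprime terms lies in every reduced Gr\"obner basis as well as in every minimal Markov basis, deserve an explicit statement or citation. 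The trade-off between the two approaches is clear: yours avoids the Lawrence-lifting machinery entirely at the cost of importing these indispensability facts.
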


However, despite the fact that almost all the examples of strongly robust ideals, previously known in the literature, had all of the bouquets mixed, there are examples of strongly robust ideals which have both types of non-free bouquets, see \cite[Example 4.3. b)]{PTV}. This lead us to raise the following question: Is it true that a strongly robust ideal has at least one mixed bouquet?, see \cite[Question 4.6]{PTV}. Sullivant answered this question in the affirmative in \cite{Su} for the particular case of codimension 2 strongly robust toric ideals.

\section{Hypergraphs and bouquets with bases}
\label{sec:HypergraphsAndBouquets}

As this section consists of three parts, we begin with a `roadmap' to point out the main definitions and results. As described in the Introduction, this section is concerned with toric ideals of  $0/1$ matrices, which are, by definition, incidence matrices of hypergraphs. To understand this class of toric ideals, we need two main definitions: a \emph{bouquet with basis} in Definition~\ref{defi_bouquet},  lacking in the previous literature and a \emph{monomial walk} in Definition~\ref{walk},  recovering the meaning of monomial walks on graphs from \cite{RTT}, \cite{Vill}. The first two subsections are motivated by \cite[Problem 6.2]{PeSt}, and they, essentially, partially solve its generalization. Theorem~\ref{bouq_basis_classif} classifies bouquets with bases,  which are either free or mixed subbouquets,  while the running example shows its implications on identifying and  constructing elements in the Graver basis of a hypergraph.  Proposition~\ref{connected_sunflower} shows that certain hypergraphs based on a sunflower are bouquets with bases and, in particular, recovers the main structural result of \cite{PeSt}. As another application of bouquets with bases, Proposition~\ref{graver_not_ugb} generalizes a result by \cite{DST} and \cite{TT} for graphs to uniform hypergraphs and showing that the universal Gr{\"o}bner and Graver bases differ for complete uniform hypergraphs with enough vertices. However, in general, there exist hypergraphs that do not admit such a subbouquet structure.  In this case, we provide general constructions in Sections~\ref{sec:complexity} and \ref{sec:complexity2}.

Let $\MH=(V,\ME)$ be a finite hypergraph on the set of vertices $V=\{x_1,\dots,x_m\}$ with edge set $\ME=\{E_1,\dots,E_n\}$, where each $E_i$ is a subset of $\{x_1,\dots,x_m\}$. When $|E_i|=2$ for all $i$ and $E_i\neq E_j$ for all $i\neq j$, we have a finite simple graph, and we will specialize to this case to  recall  results about graphs. We denote by ${\pmb \alpha}_E$ the support (column) vector of an edge $E$, and thus the toric ideal $I_{\MH}$ is the toric ideal of the matrix  $[{\pmb\alpha}_{E_1},\ldots,{\pmb\alpha}_{E_n}]$. 
Hereafter, for ease of notation, various bases of $I_\MH$ will be referred to as bases of $\MH$; the reader may simply keep in mind that the underlying toric matrix is the incidence matrix of the hypergraph $\MH$; for example, $Gr(\MH):=Gr(A_\MH)$ where $A_\MH$ is the vertex-edge incidence matrix of $\MH$.

\subsection{Bouquets with bases}

Let  $\MH=(V,\ME)$ be a hypergraph and $U\subset V$. We define the multiset $\MU_{\ME}=\{E\cap U|E\in \ME, \ E\cap U\neq\emptyset\}$ and the set $\ME_U=\{E\in \ME| E\cap U\not= \emptyset \}$. Note that the multiset $\MU_{\ME}$ and the set $\ME_U$ have the same number of elements: $\ME_U$ is the set of edges of $\MH$ that intersect the vertex set $U$, while $\MU_\ME$ consists of their  restrictions to $U$.

\begin{Definition}\label{defi_bouquet}
\rm
The set of edges $\ME_U$ of $\MH$ is called a {\em bouquet with basis} $U\subset V$ if the toric ideal of the multi-hypergraph  $(U,\MU_{\ME})$ is principal, generated by an element $e^{{\bf c}^+}-e^{{\bf c}^-}$ with $\supp({\bf c})=\ME_{U}$, and such that the first nonzero coordinate of ${\cb}={\cb}^+-{\cb}^-$ is positive. Here the toric ideal of $(U,\MU_{\ME})$ is contained in the polynomial ring with variables indexed by the non-empty subsets $e_i:=E_i\cap U$. 

 Moreover, for a bouquet with basis $\ME_U$, we define the vector ${\bf c}_{\ME_U}\in\ZZ^n$ such that $({\bf c}_{\ME_U})_E=c_E$ for any ${E\in\ME_U}$ and $0$ otherwise, and the vector ${\bf a}_{\ME_U}=\sum_{E\in\ME_U} c_E{\pmb\alpha}_E\in\ZZ^m$. Here $c_E$ denotes the coordinate of $\cb$ corresponding to the edge $E$, and note from the definition of $\cb$ that $c_E\neq 0$ for all $E\in\ME_{U}$.  
\end{Definition}

The following examples capture the subtleties of this definition and its similarity to Definition~\ref{bouq_def}.  

\begin{Example}\label{bouquet_basis_examples}
{\em a) Consider the hypergraph $\MH=(V,\ME)$ from Figure~\ref{three_bouq}, with the set of vertices $V=\{x,v_1,\ldots,v_{22}\}$ and whose set of edges $\ME$ consists of the following $20$ edges: $E_1=\{x, v_1, v_2\}$, $E_2=\{x, v_3, v_4\}$, $E_3=\{x, v_5, v_6\}$, $E_4=\{ v_1, v_3, v_5\}$, $E_5=\{v_2, v_4, v_6\}$, $E_6=\{x, v_7, v_8\}$, $E_7=\{x, v_9, v_{10}\}$, $E_8=\{x, v_{11}, v_{12}\}$, $E_9=\{ x, v_{13}, v_{14}\}$, $E_{10}=\{v_7, v_8, v_9\}$,  $E_{11}=\{v_{10}, v_{11}, v_{13}\}$, $E_{12}=\{v_{12}, v_{14}\}$, $E_{13}=\{x, v_{15}, v_{16}\}$, $E_{14}=\{x, v_{17}, v_{18}\}$, $E_{15}=\{x, v_{19}, v_{20}\}$, $E_{16}=\{ x, v_{21}, v_{22}\}$, $E_{17}=\{v_{16}, v_{18}, v_{20}\}$,  $E_{18}=\{ v_{15}, v_{19}\}$, $E_{19}=\{v_{17}, v_{19}, v_{21}\}$, $E_{20}=\{ v_{20}, v_{22}\}$.   

The first bouquet with basis $\ME_{U_1}$ has five edges, the basis $U_1$ is the set $\{v_1,\ldots,v_6\}$, the vector ${\bf c}_1$ corresponds to the binomial generator $e_1e_2e_3-e_4e_5$ of the toric ideal of $(U_1,\MU_{1_{\ME}})$, and thus ${\bf c}_{\ME_{U_1}}=(1,1,1,-1,-1,0,\ldots,0)\in\ZZ^{20}$ and  $${\bf a}_{\ME_{U_1}}={\pmb\alpha}_{E_1}+{\pmb\alpha}_{E_2}+{\pmb\alpha}_{E_3}-{\pmb\alpha}_{E_4}-{\pmb\alpha}_{E_5}=(3,0,\ldots,0)\in\ZZ^{ 23}.$$

The second bouquet with basis $\ME_{U_2}$ has seven edges, the basis $U_2$ is the set $\{v_7,\ldots,v_{14}\}$, the vector ${\bf c}_2$ corresponds to the binomial generator $e_6e_7e_8e_9-e_{10}e_{11}e_{12}$ of the toric ideal of $(U_2,\MU_{2_{\ME}})$, and thus the encoding vectors are $${\bf c}_{\ME_{U_2}}=(0,0,0,0,0,1,1,1,1,-1,-1,-1,0,\ldots,0)\in\ZZ^{20}$$ and $${\bf a}_{\ME_{U_2}}=\sum_{i=6}^9 {\pmb\alpha}_{E_i}-\sum_{j=10}^{12} {\pmb\alpha}_{E_j}=(4,0,\ldots,0)\in\ZZ^{ 23}.$$

The third bouquet with basis $\ME_{U_3}$ has eight edges, the basis $U_3=\{ v_{15},\ldots,v_{22}\}$, the vector ${\bf c}_3$ corresponds to the binomial generator $e_{13}e_{14}e_{15}^2e_{16}-e_{17}e_{18}e_{19}e_{20}$ of the toric ideal of $(U_3,\MU_{3_{\ME}})$, and thus ${\bf c}_{\ME_{U_3}}=(0,\ldots,0,1,1,2,1,-1,-1,-1,-1)\in\ZZ^{20}$ and $${\bf a}_{\ME_{U_3}}={\pmb\alpha}_{E_{13}}+{\pmb\alpha}_{E_{14}}+2{\pmb\alpha}_{E_{15}}+{\pmb\alpha}_{E_{16}}-\sum_{j=17}^{20}{\pmb\alpha}_{E_j} =(5,0,\ldots,0)\in\ZZ^{ 23}.$$ 

On the other hand, if $A=A_{\MH}$ is the incidence matrix of the hypergraph $\MH$ with columns ${\pmb\alpha}_{E_1},\ldots,{\pmb\alpha}_{E_{20}}$, then $G_A$ has three mixed non-free bouquets $B_1,B_2,B_3$. More precisely, the first bouquet $B_1$ corresponds to the vectors ${\pmb\alpha}_{E_1},\ldots,{\pmb\alpha}_{E_5}$, the second bouquet $B_2$ to the vectors ${\pmb\alpha}_{E_6},\ldots,{\pmb\alpha}_{E_{12}}$, and the third bouquet $B_3$ to the vectors ${\pmb\alpha}_{E_{13}},\ldots,{\pmb\alpha}_{E_{20}}$. Moreover, it turns out that $\cb_{B_i}=\cb_{\ME_{U_i}}$ and $\ab_{B_i}=\ab_{\ME_{U_i}}$  for all $i$. 

\begin{figure}[hbt]
\label{three_bouq}
\begin{center}
\psset{unit=0.9cm}
\begin{pspicture}(0,0.75)(12,3.5)
 \rput(1,1){$\bullet$}
 \rput(1,2){$\bullet$}
 \rput(1,3){$\bullet$}
 \rput(2,1.5){$\bullet$}
 \rput(2,2){$\bullet$}
 \rput(2,2.5){$\bullet$}
 \rput(3,2){$\bullet$}
 \rput(2,3){\tiny $v_1$}
 \rput(1,3.5){\tiny $v_2$}
 \rput(1.6,2){\tiny $v_3$}
 \rput(0.35,2){\tiny $v_4$}
 \rput(2,1){\tiny $v_5$}
 \rput(1,0.5){\tiny $v_6$}
 \rput(3.5,2){\tiny $x$}
\psecurve[linewidth=1.1pt, linecolor=black](1,0.8)(1.35,2)(1,3.2)(0.65,2)(1,0.8)(1.35,2)(1,3.2)(0.65,2)
\psecurve[linewidth=1.1pt, linecolor=black](2,1.3)(2.2,2)(2,2.7)(1.8,2)(2,1.3)(2.2,2)(2,2.7)(1.8,2)
\psecurve[linewidth=1.1pt, linecolor=black](3.2,2)(2,2.2)(0.8,2)(2,1.8)(3.2,2)(2,2.2)(0.8,2)(2,1.8)
\psecurve[linewidth=1.1pt, linecolor=black](3.2,1.9)(2.15,2.65)(0.8,3.1)(1.85,2.35)(3.2,1.9)(2.15,2.65)(0.8,3.1)(1.85,2.35)
\psecurve[linewidth=1.1pt, linecolor=black](3.2,2.1)(1.85,1.65)(0.8,0.9)(2.15,1.35)(3.2,2.1)(1.85,1.65)(0.8,0.9)(2.15,1.35)
 
 \rput(4.5,2){\tiny $x$}
 \rput(5,2){$\bullet$}
 \rput(5,2.5){$\bullet$}
 \rput(5,3){$\bullet$}
 \rput(6,1.5){$\bullet$}
 \rput(6,2){$\bullet$}
 \rput(6,2.5){$\bullet$}
 \rput(7,1){$\bullet$}
 \rput(7,2){$\bullet$}
 \rput(7,3){$\bullet$}
 \rput(4.5,2.5){\tiny $v_7$}
 \rput(4.5,3){\tiny $v_8$}
 \rput(7.5,3){\tiny $v_9$}
 \rput(6.4,2.65){\tiny $v_{10}$}
 \rput(6.5,2){\tiny $v_{11}$}
 \rput(7.5,2){\tiny $v_{12}$}
 \rput(6,1){\tiny $v_{13}$}
 \rput(7.5,1){\tiny $v_{14}$} 
\psecurve[linewidth=1.1pt, linecolor=black](7,0.8)(7.2,1.5)(7,2.2)(6.8,1.5)(7,0.8)(7.2,1.5)(7,2.2)(6.8,1.5)
\psecurve[linewidth=1.1pt, linecolor=black](6,1.3)(6.2,2)(6,2.7)(5.8,2)(6,1.3)(6.2,2)(6,2.7)(5.8,2)
\psecurve[linewidth=1.1pt, linecolor=black](4.8,2)(6,1.8)(7.2,2)(6,2.2)(4.8,2)(6,1.8)(7.2,2)(6,2.2)
\psecurve[linewidth=1.1pt, linecolor=black](4.8,1.9)(5.85,2.65)(7.2,3.1)(6.15,2.35)(4.8,1.9)(5.85,2.65)(7.2,3.1)(6.15,2.35)
\psecurve[linewidth=1.1pt, linecolor=black](4.8,2.1)(6.15,1.65)(7.2,0.9)(5.85,1.35)(4.8,2.1)(6.15,1.65)(7.2,0.9)(5.85,1.35)
\psecurve[linewidth=1.1pt, linecolor=black](5,1.8)(5.2,2.5)(5,3.2)(4.8,2.5)(5,1.8)(5.2,2.5)(5,3.2)(4.8,2.5)
\psecurve[linewidth=1.1pt, linecolor=black](5.15,2.75)(5,3.2)(4.85,2.75)(5,2.3)(5.15,2.8)(5,3.2)
\psline[linewidth=1.1pt, linecolor=black](5.15,2.8)(7,2.8)
\psline[linewidth=1.1pt, linecolor=black](5,3.2)(7,3.2)
\pscurve[linewidth=1.1pt, linecolor=black](7,2.8)(7.2,3)(7,3.2)

 \rput(8.5,2){\tiny $x$}
 \rput(9,2){$\bullet$}
 \rput(9,2.5){$\bullet$}
 \rput(9,3){$\bullet$}
 \rput(10,1.5){$\bullet$}
 \rput(10,2){$\bullet$}
 \rput(10,2.5){$\bullet$}
 \rput(11,1){$\bullet$}
 \rput(11,2){$\bullet$}
 \rput(11,3){$\bullet$}
 \rput(8.5,2.5){\tiny $v_{15}$}
 \rput(8.5,3){\tiny $v_{16}$}
 \rput(10.4,2.65){\tiny $v_{17}$}
 \rput(11.5,3){\tiny $v_{18}$}
 \rput(10.5,2){\tiny $v_{19}$}  
 \rput(11.65,2){\tiny $v_{20}$}
 \rput(10,1){\tiny $v_{21}$}
 \rput(11.5,1){\tiny $v_{22}$} 
 \psecurve[linewidth=1.1pt, linecolor=black](11,0.8)(11.2,1.5)(11,2.2)(10.8,1.5)(11,0.8)(11.2,1.5)(11,2.2)(10.8,1.5)
\psecurve[linewidth=1.1pt, linecolor=black](10,1.3)(10.2,2)(10,2.7)(9.8,2)(10,1.3)(10.2,2)(10,2.7)(9.8,2)
\psecurve[linewidth=1.1pt, linecolor=black](8.7,2)(10,1.7)(11.3,2)(10,2.3)(8.7,2)(10,1.7)(11.3,2)(10,2.3)
\psecurve[linewidth=1.1pt, linecolor=black](8.8,1.9)(9.85,2.65)(11.2,3.1)(10.15,2.35)(8.8,1.9)(9.85,2.65)(11.2,3.1)(10.15,2.35)
\psecurve[linewidth=1.1pt, linecolor=black](8.8,2.1)(10.15,1.65)(11.2,0.9)(9.85,1.35)(8.8,2.1)(10.15,1.65)(11.2,0.9)(9.85,1.35)
\psecurve[linewidth=1.1pt, linecolor=black](9,1.8)(9.2,2.5)(9,3.2)(8.8,2.5)(9,1.8)(9.2,2.5)(9,3.2)(8.8,2.5)
\psecurve[linewidth=1.1pt, linecolor=black](8.8,2.6)(9.65,2.4)(10.2,1.9)(9.35,2.1)(8.8,2.6)(9.65,2.4)(10.2,1.9)(9.35,2.1)
\psecurve[linewidth=1.1pt, linecolor=black](10.85,2.5)(11,3.2)(11.15,2.5)(11,1.8)(10.85,2.8)(11,3.2)
\psline[linewidth=1.1pt, linecolor=black](10.85,2.8)(9,2.8)
\psline[linewidth=1.1pt, linecolor=black](11,3.2)(9,3.2)
\pscurve[linewidth=1.1pt, linecolor=black](9,2.8)(8.8,3)(9,3.2)

\end{pspicture}
\end{center}
\caption{Bouquets with bases $\ME_{U_1},\ME_{U_2},\ME_{U_3}$.}\label{three_bouq} 
\end{figure}

(b) The tetrahedron is an example of a bouquet with basis, and the basis can be chosen to be any facet. Let $V=\{ v_1, v_2, v_3, v_4\}$ and $\ME= \{E_1=\{v_2, v_3, v_4\},\  E_2=\{v_1, v_3, v_4\},\ E_3=\{v_1, v_2, v_4\},\ E_4=\{v_1, v_2, v_3\} \}$. If we consider as basis the set $U_1=\{v_1, v_2, v_3\}$ then
$\MU_{1_{\ME}}=\{e_1=\{v_2, v_3\}, e_2=\{v_1, v_3\}, e_3=\{v_1, v_2\}, e_4=\{v_1, v_2, v_3\} \}$ and the toric ideal of $(U_1, \MU_{1_{\ME}})$  is generated by the element $e^{{\bf c}^+}-e^{{\bf c}^-}=e_1e_2e_3-e_4^2$, with ${\bf c}=(1,1,1,-2)$. Therefore, ${\bf c}_{\ME_{U_1}}=(1,1,1,-2)$ and ${\bf a}_{\ME_{U_1}}={\pmb\alpha}_{E_1}+{\pmb\alpha}_{E_2}+{\pmb\alpha}_{E_3}-2{\pmb\alpha}_{E_4}=(0,0,0,3)$. The same example has four different representations as a bouquet with basis $U$, each one of them having pairwise distinct vectors $\cb_{\ME_U}$ and $\ab_{\ME_U}$, respectively. More precisely, if $U_2=\{v_1, v_2, v_4\}$ then ${\bf c}_{\ME_{U_2}}=(1,1,-2,1)$ and ${\bf a}_{\ME_{U_2}}=(0,0,3,0)$, if $U_3=\{v_1, v_3, v_4\}$ then ${\bf c}_{\ME_{U_3}}=(1,-2,1,1)$ and ${\bf a}_{\ME_{U_3}}=(0,3,0,0)$, and if $U_4=\{v_2, v_3, v_4\}$ then ${\bf c}_{\ME_{U_4}}=(2,-1,-1,-1)$ and ${\bf a}_{\ME_{U_4}}=(-3,0,0,0)$.  Note that $I_\MH$ is the zero ideal and the Gale transforms $G({\pmb\alpha}_{E_i})$ are zero vectors for all $i$. Therefore, the bouquet graph of $I_{\MH}$ has only the free bouquet $B$ consisting of all vertices ${\pmb\alpha}_{E_1},\ldots,{\pmb\alpha}_{E_4}$. In particular, by definition of $\cb_B$ and $\ab_B$ we observe that we can choose these vectors to be any of the 4 pairs of vectors obtained before as $\cb_{\ME_U}$ and $\ab_{\ME_U}$.}
\end{Example}


Given the apparent similarity with Definition~\ref{bouq_def} and in light of the two situations discussed in Example~\ref{bouquet_basis_examples}, one would expect a certain relationship between a bouquet with basis of a hypergraph $\MH$ and a bouquet of its incidence matrix. Even more, one may ask whether $\cb_{\ME_U}$ and $\ab_{\ME_U}$, the encoding vectors of a bouquet with basis, are analogous to  $\cb_B$ and $\ab_B$ as defined in Section~\ref{sec:BouqDec}. The following result clarifies the `bouquet with basis' terminology. As we will see, a bouquet with basis corresponds to a subbouquet of the incidence matrix of the hypergraph, where the correspondence is the natural one associating to a set of edges of a hypergraph the set of corresponding support column vectors of its incidence matrix. Thus, from now on, by abuse of notation, we will identify the bouquet with basis $\ME_U$ with the corresponding subbouquet of the incidence matrix. Furthermore, we also note that $\cb_{\ME_U}$ matches the definition of $\cb_B$ from Section~\ref{sec:BouqDec}, and the same holds for $\ab_{\ME_U}$.

\begin{Theorem}\label{bouq_basis_classif} 
A bouquet with basis of the hypergraph $\MH=(V,\ME)$ is either a free subbouquet or a mixed subbouquet of the incidence matrix of $\MH$. 
\end{Theorem}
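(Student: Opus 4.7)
The plan is to translate the principality hypothesis on $I_{(U,\MU_{\ME})}$ into a rigid one-parameter structure on the Gale transforms of the columns indexed by $\ME_U$, from which the conclusion follows by reading off the sign pattern of $\cb$.

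First I would unpack the principality assumption. Since $I_{(U,\MU_{\ME})}$ is principal with generator $e^{\cb^+}-e^{\cb^-}$, the lattice $\Ker_{\ZZ}(A_{(U,\MU_{\ME})})$ has rank one, and because any two generators of a principal (binomial) ideal differ by a unit, $\cb$ must be a primitive generator: $\Ker_{\ZZ}(A_{(U,\MU_{\ME})})=\ZZ\cdot\cb$. Moreover, toric ideals are prime and contain no monomial, so $\cb^{+}$ and $\cb^{-}$ are both nonzero; hence $\cb$ has entries of both signs.

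Next I would construct a restriction map between kernels. Observe that $A_{(U,\MU_{\ME})}$ is precisely the submatrix of $A_\MH$ obtained by keeping the rows indexed by $U$ and the columns indexed by $\ME_U$; and for every edge $E\notin \ME_U$ (equivalently, $E\cap U=\emptyset$) the $U$-entries of $\pmb{\alpha}_E$ vanish. Consequently, for any $\ub\in\Ker_{\ZZ}(A_\MH)$, the projection $\pi(\ub)$ of $\ub$ onto the coordinates indexed by $\ME_U$ satisfies $A_{(U,\MU_{\ME})}\pi(\ub)=0$, so $\pi(\ub)\in\ZZ\cdot\cb$. Applying $\pi$ to each column $g_j$ of a Gale transform matrix $G(A_\MH)\in\ZZ^{n\times d}$ (whose columns span $\Ker_\ZZ(A_\MH)$ as a lattice) yields $\pi(g_j)=n_j\cb$ for some $n_j\in\ZZ$; reading off the row indexed by $E\in\ME_U$ produces the identity
\[
G(\pmb{\alpha}_E)\;=\;c_E\cdot\nb,\qquad \nb:=(n_1,\dots,n_d),
\]
a vector that, crucially, does not depend on $E$.

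Finally I would dichotomize on $\nb$. If $\nb=\mathbf{0}$, then every $G(\pmb{\alpha}_E)$ for $E\in\ME_U$ vanishes, so these vertices are all free, and $\ME_U$ is a free subbouquet. If $\nb\neq\mathbf{0}$, the condition $\supp(\cb)=\ME_U$ forces $c_E\neq 0$ and hence $G(\pmb{\alpha}_E)\neq 0$ for every $E\in\ME_U$, while any two such Gale transforms are nonzero rational multiples of one another; thus $\{\pmb{\alpha}_E:E\in\ME_U\}$ induces a clique in $G_{A_\MH}$, i.e.\ a non-free subbouquet. Because $\cb$ has entries of both signs there exist $E,E'\in\ME_U$ with $c_Ec_{E'}<0$, giving $G(\pmb{\alpha}_{E'})=(c_{E'}/c_E)G(\pmb{\alpha}_E)$ with negative ratio, so the subbouquet is mixed.

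The main conceptual step is the construction in the second paragraph: recognizing that principality pins down the image of $\pi$ to exactly $\ZZ\cdot\cb$. Once this is in place, the free/mixed classification is automatic, and a short computation using the primitivity of $\cb$ and the normalization ``the first nonzero coordinate of $\cb$ is positive'' also identifies $\cb_{\ME_U}$ with the bouquet encoding vector $\cb_B$ of Section~\ref{sec:BouqDec}, and similarly $\ab_{\ME_U}$ with $\ab_B$, confirming the remark made just before the theorem statement.
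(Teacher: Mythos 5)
Your proposal is correct and follows essentially the same route as the paper's proof: both arguments restrict each column of the Gale transform of the incidence matrix to the coordinates indexed by $\ME_U$, observe that the restriction lies in the kernel of the incidence matrix of $(U,\MU_{\ME})$ and hence is an integer multiple of $\cb$, deduce $G({\pmb\alpha}_{E})=c_E\cdot(\lambda_1,\ldots,\lambda_r)$ with the vector $(\lambda_1,\ldots,\lambda_r)$ independent of $E$, and then split into the cases where this vector is zero (free subbouquet) or nonzero (non-free, and mixed because $\cb$ has coordinates of both signs). The one imprecise point is your justification that $\cb^{+}$ and $\cb^{-}$ are both nonzero: ``toric ideals contain no monomial'' does not by itself rule out a generator of the form $e^{\cb^{+}}-1$; the clean reason, which the paper uses, is that the incidence matrix of $(U,\MU_{\ME})$ is a $0/1$ matrix with no zero column (each $E\cap U$ is nonempty by definition of $\ME_U$), so its toric ideal is positively graded and no nonzero kernel vector can be sign-constant.
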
  
\begin{proof}
Let $\ME_U$ be a bouquet with basis for some $U\subset V$, and set $\ME_U=\{E_1,\ldots,E_s\}$. Furthermore, denote by $M\in\ZZ^{m\times n}$ the incidence matrix of the hypergraph $(V,\ME)$. We may arrange $M$ so that its rows are indexed first by vertices in $U$ and then  vertices in $V\setminus U$, while its columns are indexed first by the edges $E_1,\ldots, E_s$ and next by the remaining edges, if any. Note that the submatrix of $M$ corresponding to the rows indexed by $U$ and the first $s$ columns, denoted by $M_U$, is the incidence matrix of the multi-hypergraph $(U,\MU_E)$, while the submatrix of $M$ corresponding to the rows indexed by $U$ and the rest of the columns is $0$. Finally, denote by $G=(g_{ij})\in\ZZ^{n\times r}$ the Gale transform of $M$, and according to the labeling of the columns of $M$ its first $s$ rows are $G({\pmb\alpha}_{E_1}),\ldots, G({\pmb\alpha}_{E_s})$. By definition of the Gale transform, column $j$ of $G$ is in the kernel of $M$. Therefore $(g_{1j},\ldots,g_{sj})\in\Ker M_U$ and since $\ME_U$ is a bouquet with basis then $(g_{1j},\ldots,g_{sj})$ is a multiple of ${\bf c}=(c_{E_1},\ldots,c_{E_s})$. Thus $g_{ij}=\lambda_j c_{E_i}$ for all $i,j$ and implicitly we obtain $G({\pmb\alpha}_{E_i})=c_{E_i} (\lambda_1,\lambda_2,\ldots,\lambda_r)$ for all $i\in\{1,\ldots,s\}$. Then it follows at once that ${\pmb\alpha}_{E_1},\ldots,{\pmb\alpha}_{E_s}$ belong to the same subbouquet $B$ of $M$. In addition, since $\supp(\cb)=\ME_U$ then $c_{E_i}\neq 0$ for all $i\in\{1,\ldots,s\}$. Thus there are two possibilities: $G({\pmb\alpha}_{E_i})={\bf 0}$ for all $i$ or $G({\pmb\alpha}_{E_i})\neq{\bf 0}$ for all $i$. In the first case we obtain that $B$ is a free subbouquet, while in the second $B$ is a non-free subbouquet. Moreover, the toric ideal of $(U,\MU_E)$ being positively graded implies that the vector ${\bf c}_{\ME_U}$ has at least one positive and one negative coordinate, and by the description of a non-free mixed bouquet $B$ in terms of $\cb_B$ from Section~\ref{sec:BouqDec},  we obtain that $B$ is mixed. \qed

\end{proof}

Let us point out that a bouquet with basis can be a proper subbouquet of a bouquet. Indeed, let $A'$ be the submatrix of the incidence matrix of the hypergraph $\MH$ from Example~\ref{bouquet_basis_examples}(a) corresponding to the first twelve columns. As it was already noticed the first five edges form a bouquet with basis, and the other seven edges form another bouquet with basis. In contrast, one can easily see that the bouquet graph of $A'$ has one mixed bouquet - consisting of all  column vectors of $A'$, and thus the two bouquets with bases are proper subbouquets.  

The previous theorem shows that bouquets with bases are always subbouquets. The natural converse question arises: Can there exist (sub)bouquets of the incidence matrix of a hypergraph which are not bouquets with bases? The answer is yes, and as an example consider the complete graph $K_4$ on four vertices, whose incidence matrix is the submatrix corresponding to the first 6 columns of $A$ from \cite[Example 1.4]{PTV}. As it was shown there $K_4$ has three non-mixed bouquets, each one being an edge. On the other hand, $K_4$ does not have any bouquets with  bases, since by Theorem~\ref{bouq_basis_classif} this would imply the existence either of a mixed non-free subbouquet or a free subbouquet.   

The main consequence of Theorem~\ref{bouq_basis_classif} is that if the edge set of a hypergraph $\MH$ can be partitioned into bouquets with bases, then the toric ideal $I_{\MH}$ is easier to describe, via Theorem~\ref{all_is_well}. The following example captures this remark.   

\begin{Example}[Example~\ref{bouquet_basis_examples}(a), continued]
\label{graver_bouquets_bases}
{\em The hypergraph $\MH$ has three bouquets with bases $\ME_{U_1},\ME_{U_2},\ME_{U_3}$ which partition $\ME$. Therefore, the subbouquet ideal of $A_{\MH}$ is given by the toric ideal of the matrix whose columns are ${\bf a}_{\ME_{U_1}}=(3,0,\ldots,0),{\bf a}_{\ME_{U_2}}=(4,0,\ldots,0),{\bf  a}_{\ME_{U_3}}=(5,0,\ldots,0)\in\ZZ^{23}$, which is the same as the toric ideal of the monomial curve $A=(3 \ 4\ 5 )$. Computing with \cite{4ti2} we obtain that the Graver basis of $I_A$ consists of seven elements  $(4, -3,  0)$, $ (1, -2,  1)$, $ (3, -1, -1)$, $ (2,  1, -2)$, $ (5,  0, -3) $,  $ (1,  3, -3)$, $ (0,  5, -4)$. Therefore, by Theorem~\ref{all_is_well}, the Graver basis of the toric ideal of the hypergraph consists of seven elements. For example,  $(1,-2,1)$ corresponds to the following Graver basis element of $I_{\MH}$: $${\bf c}_{\ME_{U_1}}-2{\bf c}_{\ME_{U_2}}+{\bf c}_{\ME_{U_3}}=(1,1,1,-1,-1,-2,-2,-2,-2,2,2,2,1,1,2,1,-1,-1,-1,-1)$$ and it encodes the binomial $$E_1E_2E_3E_{10}^2E_{11}^2E_{12}^2E_{13}E_{14}E_{15}^2E_{16} -
 E_4E_5E_6^2E_7^2E_8^2E_9^2E_{17}E_{18}E_{19}E_{20}.$$
This binomial corresponds to the primitive monomial walk (see Definition~\ref{walk}) depicted in Figure~\ref{Fig8}, where the three copies of the vertex $x$ should be identified, but are shown separately for better visibility.

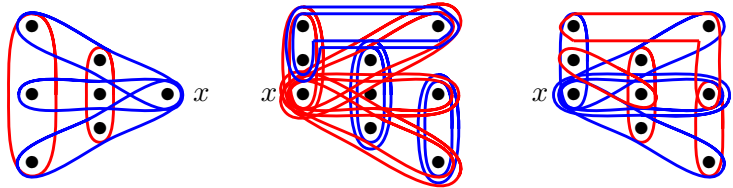
\begin{figure}[hbt]
\label{Fig8}
\begin{center}
\psset{unit=0.9cm}
\begin{pspicture}(0,0.75)(12,3.5)
 \rput(1,1){$\bullet$}
 \rput(1,2){$\bullet$}
 \rput(1,3){$\bullet$}
 \rput(2,1.5){$\bullet$}
 \rput(2,2){$\bullet$}
 \rput(2,2.5){$\bullet$}
 \rput(3,2){$\bullet$}
 \rput(3.5,2){\small $x$}
\psecurve[linewidth=1.1pt, linecolor=red](1,0.8)(1.35,2)(1,3.2)(0.65,2)(1,0.8)(1.35,2)(1,3.2)(0.65,2)
\psecurve[linewidth=1.1pt, linecolor=red](2,1.3)(2.2,2)(2,2.7)(1.8,2)(2,1.3)(2.2,2)(2,2.7)(1.8,2)
\psecurve[linewidth=1.1pt, linecolor=blue](3.2,2)(2,2.2)(0.8,2)(2,1.8)(3.2,2)(2,2.2)(0.8,2)(2,1.8)
\psecurve[linewidth=1.1pt, linecolor=blue](3.2,1.9)(2.15,2.65)(0.8,3.1)(1.85,2.35)(3.2,1.9)(2.15,2.65)(0.8,3.1)(1.85,2.35)
\psecurve[linewidth=1.1pt, linecolor=blue](3.2,2.1)(1.85,1.65)(0.8,0.9)(2.15,1.35)(3.2,2.1)(1.85,1.65)(0.8,0.9)(2.15,1.35)
 
 \rput(4.5,2){\small $x$}
 \rput(5,2){$\bullet$}
 \rput(5,2.5){$\bullet$}
 \rput(5,3){$\bullet$}
 \rput(6,1.5){$\bullet$}
 \rput(6,2){$\bullet$}
 \rput(6,2.5){$\bullet$}
 \rput(7,1){$\bullet$}
 \rput(7,2){$\bullet$}
 \rput(7,3){$\bullet$}
\psecurve[linewidth=1.1pt, linecolor=blue](7,0.8)(7.2,1.5)(7,2.2)(6.8,1.5)(7,0.8)(7.2,1.5)(7,2.2)(6.8,1.5)
\psecurve[linewidth=1.1pt, linecolor=blue](7,0.7)(7.3,1.5)(7,2.3)(6.7,1.5)(7,0.7)(7.3,1.5)(7,2.3)(6.7,1.5)
\psecurve[linewidth=1.1pt, linecolor=blue](6,1.3)(6.2,2)(6,2.7)(5.8,2)(6,1.3)(6.2,2)(6,2.7)(5.8,2)
\psecurve[linewidth=1.1pt, linecolor=blue](6,1.2)(6.3,2)(6,2.8)(5.7,2)(6,1.2)(6.3,2)(6,2.8)(5.7,2)
\psecurve[linewidth=1.1pt, linecolor=red](4.8,2)(6,1.8)(7.2,2)(6,2.2)(4.8,2)(6,1.8)(7.2,2)(6,2.2)
\psecurve[linewidth=1.1pt, linecolor=red](4.7,2)(6,1.7)(7.3,2)(6,2.3)(4.7,2)(6,1.7)(7.3,2)(6,2.3)
\psecurve[linewidth=1.1pt, linecolor=red](4.8,1.9)(5.85,2.65)(7.2,3.1)(6.15,2.35)(4.8,1.9)(5.85,2.65)(7.2,3.1)(6.15,2.35)
\psecurve[linewidth=1.1pt, linecolor=red](4.7,1.8)(5.75,2.75)(7.3,3.2)(6.25,2.25)(4.7,1.8)(5.75,2.75)(7.3,3.2)(6.25,2.25)
\psecurve[linewidth=1.1pt, linecolor=red](4.8,2.1)(6.15,1.65)(7.2,0.9)(5.85,1.35)(4.8,2.1)(6.15,1.65)(7.2,0.9)(5.85,1.35)
\psecurve[linewidth=1.1pt, linecolor=red](4.7,2.2)(6.25,1.75)(7.3,0.8)(5.75,1.25)(4.7,2.2)(6.25,1.75)(7.3,0.8)(5.75,1.25)
\psecurve[linewidth=1.1pt, linecolor=red](5,1.8)(5.2,2.5)(5,3.2)(4.8,2.5)(5,1.8)(5.2,2.5)(5,3.2)(4.8,2.5)
\psecurve[linewidth=1.1pt, linecolor=red](5,1.7)(5.3,2.5)(5,3.3)(4.7,2.5)(5,1.7)(5.3,2.5)(5,3.3)(4.7,2.5)

\psecurve[linewidth=1.1pt, linecolor=blue](5.15,2.75)(5,3.2)(4.85,2.75)(5,2.3)(5.15,2.8)(5,3.2)
\psline[linewidth=1.1pt, linecolor=blue](5.15,2.8)(7,2.8)
\psline[linewidth=1.1pt, linecolor=blue](5,3.2)(7,3.2)
\pscurve[linewidth=1.1pt, linecolor=blue](7,2.8)(7.2,3)(7,3.2)

\psecurve[linewidth=1.1pt, linecolor=blue](5.25,2.65)(5,3.3)(4.75,2.75)(5,2.2)(5.25,2.7)(5,3.3)
\psline[linewidth=1.1pt, linecolor=blue](5.25,2.7)(7,2.7)
\psline[linewidth=1.1pt, linecolor=blue](5,3.3)(7,3.3)
\pscurve[linewidth=1.1pt, linecolor=blue](7,2.7)(7.3,3)(7,3.3)

 \rput(8.5,2){\small $x$}
 \rput(9,2){$\bullet$}
 \rput(9,2.5){$\bullet$}
 \rput(9,3){$\bullet$}
 \rput(10,1.5){$\bullet$}
 \rput(10,2){$\bullet$}
 \rput(10,2.5){$\bullet$}
 \rput(11,1){$\bullet$}
 \rput(11,2){$\bullet$}
 \rput(11,3){$\bullet$}
\psecurve[linewidth=1.1pt, linecolor=red](11,0.8)(11.2,1.5)(11,2.2)(10.8,1.5)(11,0.8)(11.2,1.5)(11,2.2)(10.8,1.5)
\psecurve[linewidth=1.1pt, linecolor=red](10,1.3)(10.2,2)(10,2.7)(9.8,2)(10,1.3)(10.2,2)(10,2.7)(9.8,2)
\psecurve[linewidth=1.1pt, linecolor=blue](8.8,2)(10,1.8)(11.2,2)(10,2.2)(8.8,2)(10,1.8)(11.2,2)(10,2.2)
\psecurve[linewidth=1.1pt, linecolor=blue](8.7,2)(10,1.7)(11.3,2)(10,2.3)(8.7,2)(10,1.7)(11.3,2)(10,2.3)
\psecurve[linewidth=1.1pt, linecolor=blue](8.8,1.9)(9.85,2.65)(11.2,3.1)(10.15,2.35)(8.8,1.9)(9.85,2.65)(11.2,3.1)(10.15,2.35)
\psecurve[linewidth=1.1pt, linecolor=blue](8.8,2.1)(10.15,1.65)(11.2,0.9)(9.85,1.35)(8.8,2.1)(10.15,1.65)(11.2,0.9)(9.85,1.35)
\psecurve[linewidth=1.1pt, linecolor=blue](9,1.8)(9.2,2.5)(9,3.2)(8.8,2.5)(9,1.8)(9.2,2.5)(9,3.2)(8.8,2.5)
\psecurve[linewidth=1.1pt, linecolor=red](8.8,2.6)(9.65,2.4)(10.2,1.9)(9.35,2.1)(8.8,2.6)(9.65,2.4)(10.2,1.9)(9.35,2.1)
\psecurve[linewidth=1.1pt, linecolor=red](10.85,2.5)(11,3.2)(11.15,2.5)(11,1.8)(10.85,2.8)(11,3.2)
\psline[linewidth=1.1pt, linecolor=red](10.85,2.8)(9,2.8)
\psline[linewidth=1.1pt, linecolor=red](11,3.2)(9,3.2)
\pscurve[linewidth=1.1pt, linecolor=red](9,2.8)(8.8,3)(9,3.2)

\end{pspicture}
\end{center}
\caption{A primitive monomial walk of $\MH$}\label{Fig8}
\end{figure}} 
\end{Example}

In particular, combining Theorem~\ref{bouq_basis_classif} with Proposition~\ref{gen_no_free} we obtain the following: 

\begin{Corollary}
\label{no_free}
If the set of edges of a hypergraph $\MH$ can be partitioned into bouquets with bases then $I_{\MH}$ is strongly robust.
\end{Corollary}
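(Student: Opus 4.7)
\medskip
\noindent\textbf{Proof proposal.}
The plan is to combine Theorem~\ref{bouq_basis_classif} with Proposition~\ref{gen_no_free} via a subbouquet-to-bouquet transfer argument. Suppose the edge set $\ME$ is partitioned into bouquets with bases $\ME_{U_1},\ldots,\ME_{U_t}$. By Theorem~\ref{bouq_basis_classif} each $\ME_{U_i}$ is either a free subbouquet or a mixed subbouquet of the incidence matrix $A_{\MH}$. Since the $\ME_{U_i}$ partition the set of columns of $A_{\MH}$, they form a subbouquet decomposition of $A_{\MH}$ in which every non-free member is mixed.

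Next, I claim that this forces every non-free bouquet of $A_{\MH}$ itself to be mixed. Let $B$ be a non-free bouquet, and pick a non-free column $\pmb{\alpha}_{E}\in B$. The vertex $\pmb{\alpha}_{E}$ lies in some subbouquet $\ME_{U_i}$ of our decomposition; since $\pmb{\alpha}_{E}$ is non-free, $\ME_{U_i}$ is non-free, hence mixed by construction. Therefore $\ME_{U_i}$ contains two columns $\pmb{\alpha}_{E'},\pmb{\alpha}_{E''}$ with $G(\pmb{\alpha}_{E'})=\lambda\,G(\pmb{\alpha}_{E''})$ for some $\lambda<0$. These two vectors are Gale-proportional, so they lie in the same connected component of $G_{A_{\MH}}$, namely $B$, and the edge $\{\pmb{\alpha}_{E'},\pmb{\alpha}_{E''}\}$ is a mixed edge of $B$. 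Hence $B$ is mixed.

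Since every non-free bouquet of $A_{\MH}$ is mixed, Proposition~\ref{gen_no_free} applies directly and yields that $I_{\MH}$ is strongly robust. The argument is essentially a bookkeeping step connecting the two theorems; the only subtle point—where a bit of care is needed—is the observation that mixedness of a subbouquet is witnessed by a Gale-proportionality relation with negative ratio between two of its vertices, and this relation is automatically inherited by the ambient bouquet. No further input is required.
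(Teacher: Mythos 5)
Your argument is correct and is exactly the paper's route: the corollary is obtained by combining Theorem~\ref{bouq_basis_classif} with Proposition~\ref{gen_no_free}, and your intermediate step (a mixed subbouquet contributes a mixed edge to the ambient bouquet, so every non-free bouquet of $A_{\MH}$ is mixed) is the same bookkeeping the paper leaves implicit.
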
  

\begin{Remark}\label{2_regular}
{\em As a second application of Proposition~\ref{gen_no_free} we give a new class of hypergraphs which satisfy the conclusion of Corollary~\ref{no_free}, and whose building blocks are not necessarily bouquets with bases.  Let $\MH=(V,\ME)$ be a hypergraph such that there exists $U\subset V$ with the property that $U\cap E\neq\emptyset$ for all $E\in\ME$ and every vertex of $U$ belongs to exactly two edges. Denote by $U=\{v_1,\ldots,v_t\}\subset\{v_1,\ldots,v_m\}=V$, $\ME=\{E_1,\ldots,E_n\}$ and let $B$ be a non-free bouquet of $A_{\MH}$ (if such a $B$ does not exist then $I_{\MH}=0$ and we are done). This implies that there exists an $i\in [n]$ such that ${\pmb\alpha}_{E_i}\in B$. By definition of $\MH$ there exists $v_j\in E_i\cap E_k$ with $j\leq m$ and $k\neq i$. Then the vector ${\bf u}\in\ZZ^m$, whose only nonzero coordinate is $1$ on the j-th position satisfies the following equalities
\[
{\bf u}\cdot {\pmb\alpha}_{E_i}=1, \  {\bf u}\cdot {\pmb\alpha}_{E_k}=1, \ {\bf u}\cdot {\pmb\alpha}_{E_l}=0 \ \text{ for all } l\neq i,k.
\]    
Therefore the vector ${\bf c}_{ik}:=({\bf u}\cdot{\pmb\alpha}_{E_1},\ldots,{\bf u}\cdot{\pmb\alpha}_{E_n})$ has support $\{i,k\}$ and  $G({\pmb\alpha}_{E_i})+G({\pmb\alpha}_{E_k})={\bf 0}$, see \cite[Remark 1.3]{PTV}. Since $B$ is non-free, $G({\pmb\alpha}_{E_i})=-G({\pmb\alpha}_{E_k})\neq 0$, and thus $B$ is mixed. Therefore by Proposition~\ref{gen_no_free} we obtain the desired conclusion, and in particular we recover \cite[Proposition 4.5]{GP}. Imposing in the definition of $\MH$ that $U=V$, { thus} making $\MH$ a 2-regular hypergraph, we also recover \cite[Proposition 4.2]{GP}.}
\end{Remark}

Of course, general hypergraphs do not admit a partition of their edge sets into bouquets with bases, let alone mixed subbouquets, as it was shown earlier for $K_4$. Infinitely many such examples can be constructed; see Section~\ref{sec:complexity2} for details. 

\subsection{Sunflowers}

In this subsection we identify some interesting examples of bouquets with bases, namely, the so-called `sparse bouquets' from \cite{PeSt} (of which there was no formal definition!). These hypergraphs are built on sunflowers.  The sunflower is  highly structured and useful in the hypergraph literature; for example, it is guaranteed to occur in hypergraphs with large enough edge sets, independently of the size of the vertex set (see e.g. \cite{Juk01}.)

General properties of bouquets with bases studied in the previous subsection allow us to not only recover the theorems about existence of Graver basis elements, but also to: 1) describe completely their Graver basis elements by identifying the bouquet ideal, thus the $\ab_B$'s, 2) show that these sunflowers are actually strongly robust, and 3) identify Graver basis elements of any hypergraphs which have sunflowers as subhypergraphs. In addition, unlike \cite{PeSt}, we do not specialize to uniform hypergraphs.

In Section~\ref{sec:BouqDec} it was recalled in Theorem~\ref{all_is_well} that the bouquet graph of $A$ encodes the Graver basis of $I_A$.  On the other hand, \cite{PeSt} showed that the Graver basis of $I_\MH$ is encoded by primitive monomial walks on the hypergraph. The two concepts are consolidated in the following definition. 

\begin{Definition}\label{walk}
\rm Let $({\ME}_{blue}, {\ME}_{red})$ be a multiset collection of edges of $\MH=(V,\ME)$. We denote by $\deg_{blue}(v)$ and  $\deg_{red}(v)$  the number of edges of ${\ME}_{blue}$ and ${\ME}_{red}$  containing the vertex $v$, respectively. We say that $({\ME}_{blue}, {\ME}_{red})$ are balanced on $U\subset V$ if  $\deg_{blue}(v)=\deg_{red}(v)$ for each vertex $v\in U$. 

The vector ${\bf a}_{{\ME}_{blue}, {\ME}_{red}}=(\deg_{blue}(v)-\deg_{red}(v))_{v\in V}$ is called \emph{the vector of imbalances} of $({\ME}_{blue}, {\ME}_{red})$  and its support is contained in the complement of $U$ in $V$.  If ${\bf a}_{{\ME}_{blue}, {\ME}_{red}}={\bf 0}$ then we say that $({\ME}_{blue}, {\ME}_{red})$ is a {\em monomial walk} {\footnote{For completeness, note that the support of a monomial walk, considered as a multi-hypergraph, was called a \emph{monomial hypergraph} in \cite{PeSt}, but this definition does not make an appearance in our results. Instead, we focus on bouquets with bases and the corresponding $\MU_\ME$ and $\ME_U$.}}. 
 
\end{Definition}

Every monomial walk encodes a binomial $f_{{\ME}_B, {\ME}_R}=\prod_{E\in {\ME}_B} E - \prod_{E\in {\ME}_R} E$ in $I_{\MH}$.  A  monomial walk $({\ME}_B, {\ME}_R)$ is said to be {\em primitive} if there do not exist proper sub-multisets ${\ME'}_B\subset {\ME}_B$ and  ${\ME'}_R\subset {\ME}_R$ such that $({\ME'}_B, {\ME'}_R)$ is  also a monomial walk. The toric ideal $I_{\MH}$ is generated by binomials corresponding to primitive monomial walks, see \cite[Theorem 2.8]{PeSt}. 

\begin{Remark}\label{imbalance_bouq_basis}
{\em Let $\MH=(V,\ME)$ be a hypergraph, $U\subset V$ such that $\ME_U$ is a bouquet with basis. Since the toric ideal of the (multi)hypergraph $(U,\MU_{\ME})$ is principal generated by $e^{{\bf c}^+}-e^{{\bf c}^-}$ with $\supp({\bf c})=\ME_U$ the primitive monomial walk $(\MU_{{\ME}_{blue}},\MU_{{\ME}_{red}})$ encoded by  $(U,\MU_{\ME})$ is the following: $\MU_{{\ME}_{blue}}$ is the multiset consisting of the edges $e_1,e_{i_2},\ldots,e_{i_r}$ with multiplicities $c_1,c_{i_2},\ldots,c_{i_r}$, respectively, while $\MU_{{\ME}_{red}}$ is the multiset consisting of the edges $e_{i_{r+1}},\ldots,e_{i_t}$ with multiplicities $-c_{i_{r+1}},\ldots,-c_{i_t}$, respectively. Here $c_1,c_{i_2},\ldots,c_{i_r}$ are the positive coordinates of ${\bf c}$, while $c_{i_{r+1}},\ldots,c_{i_t}$ are the remaining coordinates of ${\bf c}$, all negative. If we consider $\ME_{blue}$ to be the multiset collection of edges $E_1,E_{i_2},\ldots,E_{i_r}$ with multiplicities $c_1,c_{i_2},\ldots,c_{i_r}$ respectively, and $\ME_{red}$ the multiset collection of edges $E_{i_{r+1}},\ldots,E_{i_t}$ with multiplicities $-c_{i_{r+1}},\ldots,-c_{i_t}$ respectively then $(\ME_{blue},\ME_{red})$ are balanced on $U$. Moreover, notice that the vector of imbalances $\ab_{\ME_{blue},\ME_{red}}$ equals $\ab_{\ME_U}$, and as explained before $\cb_{\ME_U}$ determines $(\ME_{blue},\ME_{red})$. For example, considering the bouquet with basis $\ME_{U_3}$ from Example~\ref{bouquet_basis_examples}(a), the toric ideal of $(U_3,\MU_{3_{\ME}})$ was generated by $e_{13}e_{14}e_{15}^2e_{16}-e_{17}e_{18}e_{19}e_{20}$, and thus the corresponding $\ME_{blue}$ and $\ME_{red}$ are depicted with the corresponding colors in the rightmost part of Figure~\ref{Fig8}.}
\end{Remark}

Recall that a {\em matching} on a hypergraph $\MH=(V,\ME)$ is a subset $M\subset\ME$ of pairwise disjoint edges. A matching is called {\em perfect} if it covers all the vertices of the hypergraph. A hypergraph is said to be {\em connected} if its primal graph is connected, where the primal graph has the same vertex set as the hypergraph and an edge between any two vertices contained in the same hyperedge. 

A hypergraph $\MH=(V,\ME)$ is a {\em sunflower} if, for some vertex set $C$, $E_i\cap E_j=C$  for all edges $E_i, E_j\in \ME$, $i\neq j$ and $C\subsetneq E$ for all edges $E\in\ME$. The set of vertices of $C$ is called the {\em core} of the sunflower, and each $E_i$ is called a {\em petal}. A {\em matched-petal sunflower}{\footnote{ Let us also relate this definition to \cite{PeSt}, where the authors defined a \emph{monomial sunflower}: the multi-hypergraph with ${\bf a}_{{\ME}_{blue}, {\ME}_{red}}={\bf 0}$ whose support is a matched-petal sunflower. However, there is an important distinction: not every matched-petal sunflower is the support of a monomial sunflower. (As a monomial sunflower is an example of a monomial hypergraph, this definition also isn't used in this manuscript.)}} is a hypergraph consisting of a sunflower and a perfect matching on the non-core vertices. Note that the set of edges of a matched-petal sunflower partitions into the edges of sunflower, i.e. petals, and the edges of the matching, while its set of vertices is just the set of the vertices of the sunflower. A matched-petal sunflower $\MH$ is called {\em connected} if the (multi)hypergraph $\MH-C$  is connected, where $C$ represents the core vertices. Here, $\MH-C$ is the (multi)hypergraph  consisting of the restricted sunflower: $(V\setminus C,\ME')$ where $\ME'=\{E\setminus C: E\in\ME\}$, and the edges of the perfect matching of $\MH$.   

A {\em matched-petal partitioned-core sunflower} is  a hypergraph $\MH$ consisting of a collection of vertex-disjoint sunflowers $S_1, S_2,\dots, S_l$ 
and a perfect matching on the union of non-core vertices, that is $\cup_{i}(S_i\setminus C_i)$. A matched-petal partitioned-core 
sunflower is called {\em connected} if $\MH-C$ is connected, where $C=\cup_{i}C_i$. A {\em matched-petal relaxed-core sunflower} is a hypergraph $\MH$ consisting of a collection of sunflowers $S_1,S_2,\ldots,S_l$ with cores $C_1,\ldots,C_l$ respectively, which may only intersect at their cores, and a perfect matching on the union of the non-core vertices of the sunflowers. A matched-petal relaxed-core sunflower is called {\em connected} if the (multi)hypergraph $\MH-C$ is connected, where $C=\cup_{i}C_i$. 

\begin{Remark}\label{explaining_sunflowers}
{\em 
The definitions above resemble various monomial walks (based on  various types of sunflowers) introduced in \cite{PeSt}. In contrast, here we allow hypergraphs to be non-uniform and consider the supporting sunflowers as sets instead of multisets. 

As far as the standard terminology is concerned, note that a matched-petal sunflower is a particular case of matched-petal partitioned-core sunflower, which in turn is a particular case of matched-petal relaxed-core sunflower. Each of the first two subhypergraphs from Figure~\ref{three_bouq} are connected matched-petal sunflowers, while the third one is not. If we identify the vertices $x_1$, $x_2$ and $x_3$ from Figure~\ref{sunflower_curve} then the hypergraph consisting of the three depicted connected matched-petal sunflowers is a non-connected matched-petal sunflower. }
\end{Remark}

\begin{Theorem}\label{connected_sunflower} 
Every connected matched-petal relaxed-core sunflower is a bouquet with basis, where the basis consists of the non-core vertices. 
\end{Theorem}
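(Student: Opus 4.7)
The plan is to show that the restricted multi-hypergraph $(U,\MU_{\ME})$, with $U=V\setminus C$ the set of non-core vertices and $C=\bigcup_i C_i$, has a one-dimensional lattice of linear dependencies; then its toric ideal is principal with a generator of the shape required by Definition~\ref{defi_bouquet}.

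First I would verify that $(U,\MU_{\ME})$ is \emph{$2$-regular}, meaning every $v\in U$ lies in exactly two edges of $\MU_{\ME}$: by the sunflower property together with the hypothesis that distinct sunflowers meet only at their cores, $v$ belongs to a unique petal $p_v$; and by the perfect matching hypothesis, $v$ belongs to a unique matching edge $m_v$ (which equals $m_v\cap U$ since matching edges already live in $U$). Consequently, the vector $\cb$ defined by $c_E=+1$ for every petal $E$ and $c_E=-1$ for every matching edge $E$ satisfies $\sum_{E\ni v}c_E=c_{p_v}+c_{m_v}=0$ for every $v\in U$, so $\cb$ lies in the kernel of the incidence matrix of $(U,\MU_{\ME})$ and has full support $\ME_U$.

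The main step is showing that this is the \emph{entire} kernel up to a scalar. I would introduce the bipartite graph $G$ whose left (resp.\ right) vertices are the petals (resp.\ matching edges) of $\MH$, and which has one edge for each $v\in U$ joining $p_v$ to $m_v$. Any kernel element $\mathbf{k}=(k_E)_{E\in\ME_U}$ satisfies $k_{p_v}+k_{m_v}=0$ along every edge of $G$, i.e., $\mathbf{k}$ alternates in sign along edges of $G$; so if $G$ is connected the kernel is one-dimensional, spanned by $\cb$. The nontrivial part, which I expect to be the main obstacle, is the translation ``$G$ is connected iff $\MH-C$ is connected'' (the latter being the hypothesis). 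The equivalence is essentially formal: two edges of $\MU_{\ME}$ share a vertex in $U$ precisely when the corresponding vertices of $G$ are adjacent, so a path in the primal graph of $\MH-C$ through vertices $v_0,v_1,\dots,v_t$ yields a walk in $G$ through the edges $v_0,\dots,v_t$, and conversely.

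With the kernel one-dimensional, the toric ideal of $(U,\MU_{\ME})$ is principal, generated by $\prod_{E\text{ petal}}e_E-\prod_{E\text{ matching}}e_E$; after possibly replacing $\cb$ by $-\cb$ (which defines the same principal ideal) we may ensure that the first nonzero coordinate of $\cb$ is positive, so Definition~\ref{defi_bouquet} is satisfied and $\ME_U$ is a bouquet with basis $U$.
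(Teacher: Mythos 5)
Your proposal is correct and follows essentially the same route as the paper's proof: both rest on the observation that every non-core vertex lies in exactly one petal and one matching edge, so balance forces the kernel coordinates to alternate in sign, and connectivity of $\MH-C$ propagates this to show the kernel is spanned by $(1,\dots,1,-1,\dots,-1)$. Your auxiliary bipartite intersection graph is just a repackaging of the paper's walk along a path in the primal graph of $(U,\MU_{\ME})$, so the two arguments are the same in substance.
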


\begin{proof}
Let $\MH$ be a connected matched-petal relaxed-core sunflower consisting of $l$ sunflowers $S_1,\ldots,S_l$ with $C_i$ being the core vertices of sunflower $S_i$ for all $i$, and let $U=\cup_{i=1}^l S_i\setminus C_i$. Note that by definition the set of edges $\ME$ of $\MH$ partitions into the set of petals of the sunflowers $S_1,\ldots,S_l$, labeled $E_1,\ldots,E_t$, and the edges of the matching, labeled $E_{t+1},\ldots,E_k$. In order to prove that $\ME_U$ is a bouquet with basis, we note first that the vector ${\bf c}=(1,1,\ldots,1,-1,\ldots,-1)\in\ZZ^k$ corresponds to a binomial in the toric ideal of $(U,\MU_{\ME})$, where the number of $1$'s equals the number of petals, while the number of $-1$'s equals the number of edges of the matching. Assume that ${\bf u}=(u_1,\ldots,u_t,u_{t+1},\ldots,u_k)$ is an arbitrary vector corresponding to a binomial in the toric ideal of $(U,\MU_{\ME})$. It remains to prove that $u_1=\cdots=u_t$ and $u_{t+1}=\cdots=u_k=-u_1$. Let $E,E'\in\MU_{\ME}$ be two different edges, restrictions of two petals and thus non-empty, and let $v\in E$, $v'\in E'$. 

Since $\MH$ is connected, there exists a path in the primal graph of $(U,\MU_{\ME})$ from $v$ to $v'$, that is $v=v_0,v_1,\ldots,v_r=v'$. We construct inductively a sequence of edges of $\MU_{\ME}$: let $s_1$ be the largest number such that $v_{s_1}\in E:=E_1$. Since $E,E'$ are restrictions of petals then $E\cap E'=\emptyset$, and thus $s_1<r$. By definition of $s_1$, there exists an edge of $\MU_{\ME}$, say $E_2$, such that $v_{s_1}\in E_1\cap E_2$. Let $s_2$ be the largest number such that $v_{s_2}\in E_2$. If $s_2=r$ then $E'=E_2$ and we stop, otherwise we continue. In this way we obtain a sequence of edges $E=E_1,\ldots,E_{p}=E'$ of $\MU_{\ME}$ for some $p\geq 2$. 

Denote by $u_{i_j}$ the coordinate of $u$ corresponding to $E_j$ for all $j=1,\ldots,p$ (that is $|u_{i_j}|$ is the exponent of $E_j$ in the binomial). Since the binomial corresponding to $u$ is in the toric ideal of $(U,\MU_{\ME})$, every vertex of $U$ is balanced. Moreover, since every vertex of $U$ belongs to exactly two edges, $v_{s_j}$ being balanced implies that $u_{i_{j+1}}=-u_{i_j}$ for all $j<p$. Thus we obtain $u_{i_j}=(-1)^{j+1}u_1$ for all $j=1,\ldots,p$, and in particular, if $E_j$ is the restriction of a petal then $E_{j+1}$ is an edge of a matching. Therefore, we obtain that for any distinct edges $E,E'$ of $\MU_{\ME}$ the corresponding coordinates of ${\bf u}$ are either equal or negatives of each other, with equality holding if and only if $E,E'$ are simultaneously either restrictions of petals or edges of the matching $M$. Hence we get that ${\bf u}=(u_1,\ldots,u_1,-u_1,\ldots,-u_1)$, which implies that the toric ideal of $(U,\MU_{\ME})$ is principal, and generated by $e^{{\bf c}^+}-e^{{\bf c}^-}$. Thus $\ME_U$ is a bouquet with basis, as desired. \qed

\end{proof} 

Since the connected components of a matched-petal relaxed-core sunflower are connected matched-petal relaxed-core sunflowers, then by  Theorem~\ref{connected_sunflower} and Corollary~\ref{no_free} we obtain the following: 

\begin{Proposition}\label{sunflowers}
Let $\MH$ be a matched-petal relaxed-core sunflower. Then $I_{\MH}$ is strongly robust.
\end{Proposition}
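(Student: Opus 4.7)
The plan is to reduce to the connected case handled by Theorem~\ref{connected_sunflower} and then invoke Corollary~\ref{no_free}. Given a matched-petal relaxed-core sunflower $\MH$ with sunflowers $S_1,\ldots,S_l$, cores $C_1,\ldots,C_l$, and perfect matching $M$ on $\cup_i(S_i\setminus C_i)$, set $C=\cup_i C_i$ and form the connected components of $\MH - C$. I would show that each such component $\MH_j$ canonically inherits the structure of a \emph{connected} matched-petal relaxed-core sunflower.

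For this inheritance step, I would argue as follows. The non-core vertex set of each $S_i$ is connected inside $\MH - C$ (the matching edges incident to those vertices live in $\MH - C$ and witness this), so the non-core vertices of any single $S_i$ lie in a single component $\MH_j$. Grouping the sunflowers and matching edges by the component containing their non-core vertices gives each $\MH_j$ the form of a matched-petal relaxed-core sunflower whose cores are a subset of $\{C_1,\ldots,C_l\}$; by construction $\MH_j - (\text{its cores})$ is connected. The edge set $\ME$ of $\MH$ is partitioned by the edge sets of $\MH_1,\ldots,\MH_k$.

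Now Theorem~\ref{connected_sunflower} applies to each $\MH_j$, yielding a bouquet with basis $U_j\subset V(\MH_j)$ equal to the non-core vertices of $\MH_j$. Thus $\ME$ is partitioned into bouquets with bases, and Corollary~\ref{no_free} then gives that $I_{\MH}$ is strongly robust. The only mildly technical point, and the place where one must be careful, is the bookkeeping that sunflowers sharing cores do not glue two would-be components together; but precisely because shared core vertices are removed when computing connectivity, this never happens, and the decomposition goes through cleanly.
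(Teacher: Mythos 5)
Your overall strategy is exactly the paper's: decompose $\MH$ according to the connected components of $\MH-C$, observe that each piece is a connected matched-petal relaxed-core sunflower, and conclude via Theorem~\ref{connected_sunflower} and Corollary~\ref{no_free} (the paper compresses all of this into the single sentence preceding the proposition). However, the justification you give for the inheritance step contains a false claim: it is \emph{not} true that the non-core vertex set of a single sunflower $S_i$ is connected inside $\MH-C$, nor do the matching edges witness this. Distinct petals of $S_i$ become pairwise disjoint edges once the core is deleted, and the matching is free to pair each petal's non-core vertices among themselves, so the petals of one sunflower can land in several different components of $\MH-C$. The paper's own example of a non-connected matched-petal sunflower (Remark~\ref{explaining_sunflowers}: identify $x_1=x_2=x_3$ in Figure~\ref{sunflower_curve}) is a counterexample --- there $\MH$ is built from a \emph{single} sunflower with twelve petals whose non-core vertices fall into three different components.

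The gap is easily repaired without changing the architecture of your argument: assign individual petals, not whole sunflowers, to components. Each restricted petal $E\setminus C_i$ is a single edge of $\MH-C$ and hence lies entirely in one component, and the petals of $S_i$ whose restrictions land in a given component form a sub-sunflower with the same core $C_i$; sub-sunflowers sharing a core still intersect only at their cores, so each component (with its cores restored) is a connected matched-petal relaxed-core sunflower, the edge sets of these pieces still partition $\ME$, and Theorem~\ref{connected_sunflower} together with Corollary~\ref{no_free} finishes the proof as you intended.
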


In particular, one recovers \cite[Theorem 4.12]{PeSt} and implicitly \cite[Proposition 4.5]{PeSt} and \cite[Proposition 4.9]{PeSt}. To see this, we first identify the subbouquets of a matched-petal relaxed-core sunflower $\MH$ and their corresponding $\ab$-vectors. If $\MH_1,\ldots,\MH_t$ are the connected components of $\MH$, that is matched-petal relaxed-core sunflowers with the sets of core vertices $C_1,\ldots,C_t$, then denote by $C=\{v_1,\ldots,v_s\}$ the union $\cup_{i}C_i$ of core vertices of $\MH$. Moreover, we label the edges such that petals are labeled first, while the edges of the matching are labeled last. By Theorem~\ref{connected_sunflower}, $\MH_1,\ldots,\MH_t$ are bouquets with bases, the bases are the sets of non-core vertices, and the vectors $\ab_{\MH_i}$ can be computed as vectors of imbalances induced by $\cb_{\MH_i}$, as explained in Remark~\ref{imbalance_bouq_basis}. Therefore, if we label the rows of the incidence matrix of $\MH$ first according to the vertices from $C$, then it follows from Remark~\ref{imbalance_bouq_basis} that for each $j$ the vector $\ab_{\MH_j}$ has at most the first $s$ components nonzero and for each $i=1,\ldots,s$ the $i$-th coordinate of $\ab_{\MH_j}$ is equal to $d_{ij}$, where $d_{ij}$ is the number of petals of $\MH_j$ containing the vertex $v_i$. Now it is obvious via Theorem~\ref{all_is_well} that $I_{\MH}\neq 0$ if and only if $I_{D}\neq 0$, where $D$ is the matrix $(d_{ij})\in\ZZ^{s\times t}$, and their Graver basis are in bijective correspondence,  and this is essentially the content of \cite[Theorem 4.12]{PeSt}.

\begin{Example}\label{matched-petal}
{\em Consider $\MH$ to be the matched-petal relaxed-core sunflower whose three connected components $\MH_1,\MH_2,\MH_3$ are depicted below, and with cores $C_i=\{x_i\}$ for $i=1,2,3$. Note that each $\MH_i$ is a connected matched-petal sunflower, and thus a bouquet with basis by Theorem~\ref{connected_sunflower}. If $x_i\neq x_j$ for all $i\neq j$ then the union of core vertices is $C=\{x_1,x_2,x_3\}$ and the matrix $(d_{ij})\in\ZZ^{3\times 3}$ is the displayed matrix $D_1$. We recall from the previous paragraph that $d_{ij}$ is the number of petals of $\MH_j$ containing $x_i$. Thus in this case $I_{\MH}=0$.       
\[
D_1=\left( \begin{array}{ccc}
3 & 0 & 0 \\
0 & 4 & 0 \\
0 & 0 & 5  
\end{array} \right) \hspace{0.5cm} D_2=\left( \begin{array}{ccc}
3 & 4 & 0 \\
0 & 0 & 5  
\end{array} \right) \hspace{0.5cm} D_3=\left( \begin{array}{ccc}
3 & 4 & 5  
\end{array} \right) 
\]
If $x_1=x_2\neq x_3$ then the union of core vertices is $C=\{x_1,x_3\}$ and we obtain the corresponding matrix $D_2\in\ZZ^{2\times 3}$. Therefore in this case
$I_{\MH}\neq 0$ is principal. Finally, if $x_1=x_2=x_3$ then $C=\{x_1\}$ and the matrix $D_3$ is given above. 

\begin{figure}[hbt]
\label{sunflower_curve}
\begin{center}
\psset{unit=0.9cm}
\begin{pspicture}(0,0.75)(12,3.5)
 \rput(1,1){$\bullet$}
 \rput(1,2){$\bullet$}
 \rput(1,3){$\bullet$}
 \rput(2,1.5){$\bullet$}
 \rput(2,2){$\bullet$}
 \rput(2,2.5){$\bullet$}
 \rput(3,2){$\bullet$}
 \rput(3.6,2){\small $x_1$}
\psecurve[linewidth=1.1pt, linecolor=black](1,0.8)(1.35,2)(1,3.2)(0.65,2)(1,0.8)(1.35,2)(1,3.2)(0.65,2)
\psecurve[linewidth=1.1pt, linecolor=black](2,1.3)(2.2,2)(2,2.7)(1.8,2)(2,1.3)(2.2,2)(2,2.7)(1.8,2)
\psecurve[linewidth=1.1pt, linecolor=black](3.2,2)(2,2.2)(0.8,2)(2,1.8)(3.2,2)(2,2.2)(0.8,2)(2,1.8)
\psecurve[linewidth=1.1pt, linecolor=black](3.2,1.9)(2.15,2.65)(0.8,3.1)(1.85,2.35)(3.2,1.9)(2.15,2.65)(0.8,3.1)(1.85,2.35)
\psecurve[linewidth=1.1pt, linecolor=black](3.2,2.1)(1.85,1.65)(0.8,0.9)(2.15,1.35)(3.2,2.1)(1.85,1.65)(0.8,0.9)(2.15,1.35)
 
 \rput(4.5,2){\small $x_2$}
 \rput(5,2){$\bullet$}
 \rput(5,2.5){$\bullet$}
 \rput(5,3){$\bullet$}
 \rput(6,1.5){$\bullet$}
 \rput(6,2){$\bullet$}
 \rput(6,2.5){$\bullet$}
 \rput(7,1){$\bullet$}
 \rput(7,2){$\bullet$}
 \rput(7,3){$\bullet$}
\psecurve[linewidth=1.1pt, linecolor=black](7,0.8)(7.2,1.5)(7,2.2)(6.8,1.5)(7,0.8)(7.2,1.5)(7,2.2)(6.8,1.5)
\psecurve[linewidth=1.1pt, linecolor=black](6,1.3)(6.2,2)(6,2.7)(5.8,2)(6,1.3)(6.2,2)(6,2.7)(5.8,2)
\psecurve[linewidth=1.1pt, linecolor=black](4.8,2)(6,1.8)(7.2,2)(6,2.2)(4.8,2)(6,1.8)(7.2,2)(6,2.2)
\psecurve[linewidth=1.1pt, linecolor=black](4.8,1.9)(5.85,2.65)(7.2,3.1)(6.15,2.35)(4.8,1.9)(5.85,2.65)(7.2,3.1)(6.15,2.35)
\psecurve[linewidth=1.1pt, linecolor=black](4.8,2.1)(6.15,1.65)(7.2,0.9)(5.85,1.35)(4.8,2.1)(6.15,1.65)(7.2,0.9)(5.85,1.35)
\psecurve[linewidth=1.1pt, linecolor=black](5,1.8)(5.2,2.5)(5,3.2)(4.8,2.5)(5,1.8)(5.2,2.5)(5,3.2)(4.8,2.5)
\psecurve[linewidth=1.1pt, linecolor=black](5.15,2.75)(5,3.2)(4.85,2.75)(5,2.3)(5.15,2.8)(5,3.2)
\psline[linewidth=1.1pt, linecolor=black](5.15,2.8)(7,2.8)
\psline[linewidth=1.1pt, linecolor=black](5,3.2)(7,3.2)
\pscurve[linewidth=1.1pt, linecolor=black](7,2.8)(7.2,3)(7,3.2)

 \rput(8.5,2){\small $x_3$}
 \rput(9,2){$\bullet$}
 \rput(9,2.5){$\bullet$}
 \rput(9,3){$\bullet$}
 \rput(9,1.5){$\bullet$}
 \rput(9,1){$\bullet$}
 \rput(10,1.5){$\bullet$}
 \rput(10,2){$\bullet$}
 \rput(10,2.5){$\bullet$}
 \rput(11,1){$\bullet$}
 \rput(11,2){$\bullet$}
 \rput(11,3){$\bullet$}
\psecurve[linewidth=1.1pt, linecolor=black](10,1.3)(10.2,2)(10,2.7)(9.8,2)(10,1.3)(10.2,2)(10,2.7)(9.8,2)
\psecurve[linewidth=1.1pt, linecolor=black](9,0.8)(9.2,1.5)(9,2.2)(8.8,1.5)(9,0.8)(9.2,1.5)(9,2.2)(8.8,1.5)
\psecurve[linewidth=1.1pt, linecolor=black](8.75,2)(10,1.8)(11.25,2)(10,2.2)(8.75,2)(10,1.8)(11.25,2)(10,2.2)
\psecurve[linewidth=1.1pt, linecolor=black](8.8,1.9)(9.85,2.65)(11.2,3.1)(10.15,2.35)(8.8,1.9)(9.85,2.65)(11.2,3.1)(10.15,2.35)
\psecurve[linewidth=1.1pt, linecolor=black](8.8,2.1)(10.15,1.65)(11.2,0.9)(9.85,1.35)(8.8,2.1)(10.15,1.65)(11.2,0.9)(9.85,1.35)
\psecurve[linewidth=1.1pt, linecolor=black](9,1.8)(9.2,2.5)(9,3.2)(8.8,2.5)(9,1.8)(9.2,2.5)(9,3.2)(8.8,2.5)
\psecurve[linewidth=1.1pt, linecolor=black](10.85,2.5)(11,3.2)(11.15,2.5)(11,1.8)(10.85,2.8)(11,3.2)
\psline[linewidth=1.1pt, linecolor=black](10.85,2.8)(9,2.8)
\psline[linewidth=1.1pt, linecolor=black](11,3.2)(9,3.2)
\pscurve[linewidth=1.1pt, linecolor=black](9,2.8)(8.8,3)(9,3.2)
\psccurve[linewidth=1.1pt, linecolor=black](9,2.32)(9.18,2.5)(9,2.68)(8.82,2.5)(9,2.32)(9.18,2.5)(9,2.68)(8.82,2.5)
\psccurve[linewidth=1.1pt, linecolor=black](9,1.32)(9.18,1.5)(9,1.68)(8.82,1.5)(9,1.32)(9.18,1.5)(9,1.68)(8.82,1.5)
\psecurve[linewidth=1.1pt, linecolor=black](8.75,1)(10,0.8)(11.25,1)(10,1.2)(8.75,1)(10,0.8)(11.25,1)(10,1.2)

\end{pspicture}
\end{center}
\caption{Matched-petal sunflowers}\label{sunflower_curve}
\end{figure}
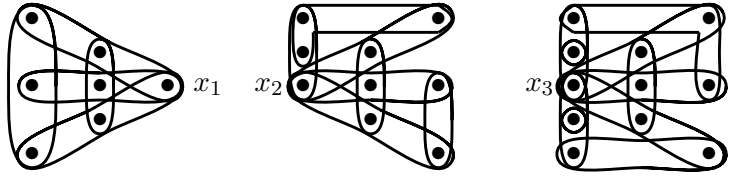
}
\end{Example}

Specializing the previous discussions to a matched-petal sunflower $\MH$ with $t\geq 1$ connected components one obtains the following classification. The toric ideal $I_{\MH}=0$ if and only if $t=1$, that is $\MH$ is a connected matched-petal sunflower. The toric ideal $I_{\MH}\neq 0$ if and only if $t>1$, in which case the subbouquet ideal is just the toric ideal of the monomial curve $(d_1 \ldots d_t)$, where $d_i$ represents the number of petals of the $i$-th connected component, containing the core.    

\subsection{Graver basis elements of hypergraphs}

 In cases of interest, it may happen that all (or almost all) bouquets are singletons, seemingly implying that the bouquet construction does not offer anything. However, this is not the case since by \cite[Proposition 4.13]{St} if we restrict to a submatrix $C$ of $A$ obtained 
by deleting some of the columns of $A$ then the Graver basis, universal Gr\"obner basis and circuits of $C$ are included in the corresponding ones of $A$. Based on this and the bouquet techniques we construct in Proposition~\ref{graver_not_ugb} an element in the Graver basis of the uniform complete hypergraph with large enough number of vertices, which is not in the universal Gr\"obner basis. 

In \cite{DST, TT} it was shown that the  universal Gr{\"o}bner basis and the Graver basis of the toric ideal of the complete graph $K_n$ are identical for $n\leq 8$ and differ for $n\geq 9$. As one application of the bouquet technique, we show that the universal Gr{\"o}bner basis and the Graver basis of the toric ideal of the complete d-uniform hypergraph $K^d_n$ differ for $n\geq (d+1)^2$, by giving a non trivial example of an element in the Graver basis which does not belong to the universal Gr{\"o}bner basis.   First note that all of the bouquets of the complete d-uniform hypergraph $K^d_n$, for large $n$, are singletons. Then, restricting to a subhypergraph $\MH$ of $K^d_n$, as explained before, we can apply the bouquet techniques to find an element from the Graver basis of $\MH$ that belongs also to the Graver basis of $K^d_n$. 

We consider a hypergraph $\MH_{d+1}=(V,\ME)$ with $(d+1)^2$ vertices $V=\{ v_{ij}| 1\leq i,j \leq d+1\}$ and $(d+2)(d+1)$ edges: let $E_j=\{v_{k1}|1\leq k\leq d+1,
k\not=j\}$  for $1\leq j\leq d+1$ and $E_{ij}=\{v_{ik}|1\leq k\leq d+1, k\not=j\}$ for $1\leq i, j\leq d+1$. The hypergraph $\MH_4$, for $d=3$, is depicted in Figure~\ref{5tetra}.

Note that $\MH_{d+1}$ is a subhypergraph of $K^d_n$ for $n\geq (d+1)^2$. In addition the set of edges of $\MH_{d+1}$ partitions into $d+1$ bouquets with bases $\ME_{U_1},\ldots,\ME_{U_{d+1}}$ and $d+1$ single edges $E_1,\ldots,E_{d+1}$. Here $\ME_{U_i}=\{E_{ij}| 1\leq j\leq d+1\}$ and $U_i=\{v_{ij}| 2\leq j\leq d+1\}$ for all $i=1,\ldots,d+1$, while the principal generator of the toric ideal of the hypergraph $(U_i,\MU_{i_{\ME}})$ is the binomial $\prod_{j\neq 1}e_{ij}-e_{i1}^{d-1}$, see Definition~\ref{defi_bouquet}. Moreover, the incidence matrix of $\MH_{d+1}$ has the rows indexed by the vertices in the following way: $v_{11},v_{21},\ldots,{v_{d+1,1}},v_{12},v_{22},\ldots,v_{d+1,d+1}$ and the columns indexed by the edges in the following way $E_{11},E_{12},\ldots,E_{1,d+1},E_{21},E_{22},\ldots,E_{d+1,d+1},E_1,\ldots,E_{d+1}$. With respect to this labeling of the edges and similarly to the computations from Example~\ref{bouquet_basis_examples}(b) we obtain that $\cb_{\ME_{U_1}}=(d-1,-1,\ldots,-1,{\bf 0},\ldots,{\bf 0})$, $\cb_{\ME_{U_2}}=({\bf 0},d-1,-1,\ldots,-1,{\bf 0},\ldots,{\bf 0})$, and so on $\cb_{\ME_{U_{d+1}}}=({\bf 0},\ldots,{\bf 0},d-1,-1,\ldots,-1,{\bf 0})$ are vectors of $\ZZ^{(d+2)(d+1)}$, where ${\bf 0}\in\ZZ^{d+1}$ represents the vector with all coordinates zero. By Remark~\ref{imbalance_bouq_basis} the vector $\ab_{\ME_{U_i}}\in\ZZ^{(d+1)^2}$ has all coordinates zero except the one corresponding to the vertex $v_{i1}$, which equals $-d$.  For the singleton subbouquets $E_1,\ldots,E_{d+1}$, the encoding vectors are: $\cb_{E_1}=({\bf 0},\ldots,{\bf 0},1,0,\ldots,0)$, $\dots$,  $\cb_{E_{d+1}}=({\bf 0},\ldots,{\bf 0},0,\ldots,0,1)${\bf,} while $\ab_{E_i}={\pmb\alpha}_{E_i}$ for all $i=1,\ldots,d+1$. 

\begin{figure}[hbt]
\label{5tetra}
\begin{center}
\psset{unit=0.5cm}
\begin{pspicture}(3,1)(10,10)

\pspolygon[fillcolor=medium,fillstyle=solid](4,7.25)(5.5,5.7)(4,4.15)(3,5.7)
\psline(4,7.25)(4,4.15)
\psline[linestyle=dashed](3,5.7)(5.5,5.7)
 
\pspolygon[fillcolor=medium,fillstyle=solid](7,4.3)(5.5,5.7)(7,7.4)(8.5,5.7)
\psline[linestyle=dashed](7,7.4)(7,4.3)
\psline(5.5,5.7)(8.5,5.7) 
 
\pspolygon[fillcolor=medium,fillstyle=solid](7,4.3)(5.5,2.4)(7,1.2)(8.5,2.4)
\psline(7,1.2)(7,4.3)
\psline[linestyle=dashed](5.5,2.4)(8.5,2.4) 
 
\pspolygon[fillcolor=medium,fillstyle=solid](10,4.15)(11,5.7)(10,7.25)(8.5,5.7)
\psline(10,7.25)(10,4.15)
\psline[linestyle=dashed](8.5,5.7)(11,5.7) 
 
\pspolygon[fillcolor=medium,fillstyle=solid](7,10.5)(5.5,9.3)(7,7.4)(8.5,9.3)
\psline(7,7.4)(7,10.5)
\psline[linestyle=dashed](5.5,9.3)(8.5,9.3) 
 
\rput(6.3,7.4){\tiny $v_{11}$} \rput(5.5,6.4){\tiny $v_{21}$}  \rput(7.7,4.3){\tiny $v_{31}$}  \rput(8.5,5){\tiny $v_{41}$} 
\rput(5,9.3){\tiny $v_{12}$} \rput(7,10.8){\tiny $v_{13}$} \rput(9,9.3){\tiny $v_{14}$} 
\rput(2.5,5.7){\tiny $v_{23}$} \rput(4,3.85){\tiny $v_{22}$} \rput(4,7.55){\tiny $v_{24}$} 
\rput(5,2.4){\tiny $v_{32}$} \rput(7,0.9){\tiny $v_{33}$} \rput(9,2.4){\tiny $v_{34}$} 
\rput(10,3.85){\tiny $v_{42}$} \rput(11.5,5.7){\tiny $v_{43}$} \rput(10,7.55){\tiny $v_{44}$}

  \end{pspicture}
\end{center}
\caption{$\MH_{4}$}\label{5tetra}
\end{figure}

For the rest of this subsection we use for simplicity the binomial representation for an element in the toric ideal of a hypergraph instead of the vector representation. It is easy to see that the following three types of binomials belong to the toric ideal of the complete d-uniform hypergraph $K^d_n$: a) $d+1$ binomials of the form $$g_i:=E_i^{d-1}\prod_{j\not=1}E_{ij}-E_{i1}^{d-1}\prod_{j\not=i}E_{j},$$ b) $d+1$ binomials of the form
$$h_i:=\prod_{i\not=l}\prod_{j\not=1}E_{ij}-E_{l}^{d}\prod_{i\not=l}E_{i1}^{d-1},$$
and c) one binomial of the form
$$\prod_{i}\prod_{j\not=1}E_{ij}-\prod_{i}E_{i}\prod_{i}E_{i1}^{d-1}.$$
Furthermore, it can be shown that they are all elements of the Graver basis of $K^d_n$, although we will prove next just for the last binomial.

\begin{Proposition}\label{graver_not_ugb}
The universal Gr{\"o}bner basis of the toric ideal of the complete $d$-uniform hypergraph $K^d_n$ differs from the Graver basis for $n\geq (d+1)^2$.
\end{Proposition}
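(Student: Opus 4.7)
The plan is to produce a single binomial
\[
f := \prod_{i=1}^{d+1}\prod_{j=2}^{d+1} E_{ij} \;-\; \prod_{i=1}^{d+1} E_i \cdot \prod_{i=1}^{d+1} E_{i1}^{d-1} \;=:\; M_1 - M_2,
\]
verify that $f\in I_{\MH_{d+1}}\subseteq I_{K^d_n}$ by a vertex-degree balance check, and prove (i) $f\in\Gr(K^d_n)$ and (ii) $f\notin\mathrm{UGB}(K^d_n)$. Since $\MH_{d+1}$ is a subhypergraph of $K^d_n$ for $n\geq (d+1)^2$, \cite[Proposition 4.13]{St} gives $\Gr(\MH_{d+1})\subseteq \Gr(K^d_n)$, so for (i) it is enough to work inside $\MH_{d+1}$.

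For (i), I would invoke Theorem~\ref{all_is_well} on the subbouquet decomposition of $A_{\MH_{d+1}}$ identified above, with subbouquet matrix $A_B$. A direct kernel computation using the explicit columns of $A_B$ shows that $\Ker_{\ZZ}(A_B)$ consists of the vectors $(u_1,\ldots,u_{d+1},u'_1,\ldots,u'_{d+1})$ satisfying $u'_j = S - du_j$ and $\sum_k u_k = S$, and that the vector $\mathbf{u}=-\mathbf{1}$ corresponds under $B$ precisely to the binomial $f$. Primitivity of $-\mathbf{1}$ in $\Gr(A_B)$ then reduces to a short case analysis: a conformal summand has each coordinate in $\{-1,0\}$; letting $r$ denote the number of coordinates $v_j$ equal to $-1$, the constraints $v'_j\in\{-1,0\}$ force $r\in\{0,1\}$ whenever some $v_j=0$ occurs and $r\in\{d,d+1\}$ whenever some $v_j=-1$ occurs. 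For $d\geq 2$ these two windows are disjoint, leaving only the trivial cases $r=0$ and $r=d+1$. Thus $-\mathbf{1}\in\Gr(A_B)$, and by Theorem~\ref{all_is_well}, $f\in\Gr(\MH_{d+1})\subseteq\Gr(K^d_n)$.

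For (ii), I would show that for every term order $\prec$ on the polynomial ring of $K^d_n$, the monomial $\ini_\prec(f)\in\{M_1,M_2\}$ is properly divisible by the leading monomial of some binomial in $I_{K^d_n}$; hence $\ini_\prec(f)$ is not a minimal generator of $\ini_\prec(I_{K^d_n})$ and $f$ is absent from the reduced Gr\"obner basis with respect to $\prec$. The reducers will be the auxiliary binomials $g_i$ and $h_l$ listed in the paragraph preceding the statement, which lie in $I_{K^d_n}$ by an analogous degree-balance check. The technical core is a multiplicativity identity: writing $h_l = A_l-B_l$ and $g_i = C_i-D_i$ with the obvious labels, one reindexes to obtain
\[
\prod_{l=1}^{d+1} A_l = M_1^d, \quad \prod_{l=1}^{d+1} B_l = M_2^d, \quad \prod_{i=1}^{d+1} C_i = M_1\,K, \quad \prod_{i=1}^{d+1} D_i = M_2\,K,
\]
where $K=(\prod_k E_k)^{d-1}$. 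When $\ini_\prec(f)=M_1$ one has $M_1^d\succ M_2^d$, so $A_l\succ B_l$ for some $l$; the leading monomial $A_l = \prod_{i\neq l}\prod_{j\neq 1}E_{ij}$ of $h_l$ then properly divides $M_1$. Symmetrically, when $\ini_\prec(f)=M_2$ the common factor $K$ cancels, giving $D_i\succ C_i$ for some $i$, and the leading monomial $D_i = E_{i1}^{d-1}\prod_{j\neq i}E_j$ of $g_i$ properly divides $M_2$. Either way, $f\notin\mathrm{UGB}(K^d_n)$.

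The main obstacle is the primitivity step: one must systematically rule out all non-trivial conformal decompositions of $-\mathbf{1}$ in $\Ker_{\ZZ}(A_B)$, and the hypothesis $d\geq 2$ is used exactly where the two divisibility windows $\{0,1\}$ and $\{d,d+1\}$ become disjoint. The product identity in step (ii) is the clean technical insight that produces a uniform reducer for every term order at once, rather than dealing with them case by case.
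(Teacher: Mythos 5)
Your proposal is correct and follows essentially the same route as the paper's proof: the same binomial $f$, the same auxiliary binomials $g_i$ and $h_l$ exploited through the same product identities $\prod_l A_l=M_1^d$, $\prod_l B_l=M_2^d$, $\prod_i C_i=KM_1$, $\prod_i D_i=KM_2$ for the term-order part, and the same reduction via the subbouquet matrix $A_B$ and Theorem~\ref{all_is_well} to the primitivity of a $\pm\mathbf{1}$ vector for the Graver part. (Incidentally, your kernel element $-\mathbf{1}$ is the correct preimage of $f$ under the correspondence $B$; the vector $(1,\ldots,1,-1,\ldots,-1)$ displayed in the paper does not lie in $\Ker_{\ZZ}(A_B)$ for this sign convention, so your version quietly fixes a sign slip while carrying out the same case analysis.)
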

\begin{proof} 
First we prove that the binomial $$f=\prod_{i}\prod_{j\not=1}E_{ij}-\prod_{i}E_{i}\prod_{i}E_{i1}^{d-1}$$ does not belong to the universal Gr{\"o}bner basis of the toric ideal of the complete $d$-uniform hypergraph $K^d_n$. We argue by contradiction. Assume that there exists a monomial order $>$ such that the binomial $f$ belongs to the reduced Gr{\"o}bner basis with respect to the order $>$. Since $E_{i1}^{d-1}\prod_{j\not=i}E_{j}$ divides properly   $\prod_{i}E_{i}\prod_{i}E_{i1}^{d-1}$, it follows that
\begin{eqnarray}\label{1category}
E_i^{d-1}\prod_{j\not=1}E_{ij}>E_{i1}^{d-1}\prod_{j\not=i}E_{j}.
\end{eqnarray}
Indeed, if this is not the case then $\ini_<(g_i)=E_{i1}^{d-1}\prod_{j\not=i}E_{j}$. From the previous divisibility it follows that: 1) if  $\ini_<(f)=\prod_{i}E_{i}\prod_{i}E_{i1}^{d-1}$ then we contradict that $f$ is in the Gr\"obner basis, or 2) if $\ini_<(f)=\prod_{i}\prod_{j\not=1}E_{ij}$ then we contradict that $f$ is reduced. Thus we obtain the inequality (\ref{1category}).

Also $\prod_{i\not=l}\prod_{j\not=1}E_{ij}$ divides $\prod_{i}\prod_{j\not=1}E_{ij}$, and similarly to the proof of (\ref{1category}) it can be shown that  
\begin{eqnarray}\label{2category}
\prod_{i\not=l}\prod_{j\not=1}E_{ij}<E_{l}^{d}\prod_{i\not=l}E_{i1}^{d-1}.
\end{eqnarray}
Taking the product of inequalities (\ref{1category}) when $i$ runs from $1$ to $d+1$ and canceling common terms we obtain
$$\prod_{i}\prod_{j\not=1}E_{ij}>\prod_{i}E_{i}\prod_{i}E_{i1}^{d-1}.$$
Similarly, taking the product of inequalities (\ref{2category}) we obtain
$$(\prod_{i}\prod_{j\not=1}E_{ij})^d<(\prod_{i}E_{i}\prod_{i}E_{i1}^{d-1})^d,$$
a contradiction.

In order to prove that the binomial $f$ belongs to the Graver basis of $K^d_n$ it is enough to prove via \cite[Proposition 4.13]{St} that $f$ belongs to the Graver basis of its subhypergraph $\MH_{d+1}$. The partition of the edges of $\MH_{d+1}$ into $d+1$ bouquets with bases $\ME_{U_1},\ldots,\ME_{U_{d+1}}$ and $d+1$ single edges $E_1,\ldots,E_{d+1}$ induces the matrix $A_B\in\ZZ^{(d+1)^2\times 2(d+1)}$
\[
A_B=[\ab_{\ME_{U_1}},\ldots,\ab_{\ME_{U_{d+1}}},\ab_{E_1},\ldots,\ab_{E_{d+1}}]=\begin{array}{c}
\left( \begin{array}{cccccccc}
-d & 0 & \ldots & 0 & 0 & 1 & \ldots & 1\\
0 & -d & \ldots & 0 & 1 & 0 & \ldots & 1\\
  &  & \ddots &  & & & \ddots & \\
0 & 0 & \ldots & -d & 1 & 1 & \ldots & 0\\
{\bf 0} & {\bf 0} & \ldots & {\bf 0} & {\bf 0} & {\bf 0} & \ldots & {\bf 0}\\
\end{array} \right)\end{array} \ ,
\]
where $\ab_{E_i}={\pmb\alpha}_{E_i}$ for all $i$, the first $d+1$ rows are indexed after the vertices $v_{11},\ldots,v_{d+1,1}$ and the last row represents block matrix ${\bf 0}\in\ZZ^{d(d+1)\times 2(d+1)}$. As it was noticed in the comments prior to this proposition the binomial $f$ corresponds to the vector $${\bf v}=(1-d,1,\ldots,1,1-d,1,\ldots,1,\ldots,1-d,1,\ldots,1,-1,-1,\ldots,-1)\in\Ker(A_{\MH_{d+1}}).$$ 
Applying Theorem~\ref{all_is_well} we have a bijective correspondence between $\Ker(A_B)$ and $\Ker(A_{\MH_{d+1}})$ and given by $$B(u_1,\ldots,u_{2d+2})=\sum_{i=1}^{d+1}{\cb_{\ME_{U_i}}}u_i+\sum_{i=1}^{d+1}{\cb_{E_i}}u_{d+1+i}.$$
Therefore replacing the formulas obtained before for $\cb_{\ME_{U_i}}$ and $\cb_{E_i}$, for all $i=1,\ldots,d+1$, we obtain that ${\bf v}=B({\bf u})$ where ${\bf u}=(1,\ldots,1,-1,\ldots,-1)\in\ZZ^{2d+2}$ with equally many $1$'s and $-1$'s. By Theorem~\ref{all_is_well} it is enough to prove that ${\bf u}$ belongs to the Graver basis of $A_B$ to conclude that ${\bf v}$ (and thus $f$) is in the Graver basis of $\MH_{d+1}$. Assume by contradiction ${\bf u}={\bf u}^+-{\bf u}^-$ is not in the Graver basis of $A_B$. Then there exists ${\bf 0}\neq {\bf w}\in\Ker(A_B)$ such that ${\bf w}^+\leq{\bf u}^+$ and ${\bf w}^-\leq{\bf u}^-$, with at least one inequality strict. In terms of coordinates this can be restated: $w_i\in\{0,1\}$ for all $i=1,\ldots,d+1$ and $w_i\in\{-1,0\}$ for all $i=d+2,\ldots,2d+2$, and with at least one coordinate zero. If ${\bf w}^+={\bf u}^+$ then since ${\bf w}\in\Ker(A_B)$ we obtain ${\bf w}^-={\bf u}^-$, a contradiction. Otherwise there exists $i\in \{1,\ldots,d+1\}$ such that $w_i=0$, and without loss of generality we may assume that $w_1=0$. Since ${\bf w}\in\Ker(A_B)$ then $w_{d+3}+\cdots+w_{2d+2}=0$, and thus $w_{d+3}=\ldots=w_{2d+2}=0$. If $w_{d+2}=0$ then ${\bf w}={\bf 0}$, a contradiction, so we necessarily have $w_{d+2}=-1$. This leads to $-dw_i=1$ for all $i=1,\ldots,d+1$ a contradiction to ${\bf w}\in\ZZ^{2d+2}$. Therefore ${\bf u}$ belongs to the Graver basis of $A_B$, and we are done.    \qed

\end{proof}

\section{Complexity of hypergraphs}
\label{sec:complexity}

It is well-known that the elements of the Graver basis of a toric ideal of a graph are of a rather special form: the exponents of each variable in an element of the Graver basis is either one or two, see, for example, \cite{RTT} and references therein, and are completely determined by their support. In other words, one can not find two elements in the Graver basis of a toric ideal of a graph with the same support. In \cite{PeSt} it was shown that exponents in the elements of the Graver basis of a toric ideal of a hypergraph can be arbitrarily high, and are not uniquely determined by their supports, see Example~\ref{graver_bouquets_bases}. Thus the natural question is: how complicated are the Graver basis, the universal Gr{\"o}bner basis  or a Markov basis of a hypergraph? 

The bouquet ideal technique can be used to prove that toric ideals of hypergraphs are as complicated as toric ideals in general. Namely, for any toric ideal, there exists a toric ideal of a hypergraph with ``worse" Graver basis, universal Gr{\"o}bner basis and Markov bases, as well as circuits. ``Worse" means that the corresponding set for the toric ideal of a hypergraph has at least the same cardinality and elements of higher degrees than the corresponding elements of the toric ideal. In \cite[Theorem 2.1]{DO} the authors show that for a particular matrix $A$, namely the one corresponding to the no-three-way interaction model in statistics, elements in a minimal Markov basis can be arbitrarily complicated as two of the three dimensions of the underlying table grow to infinity. Such matrices are, in fact, incidence matrices of $3$-uniform hypergraphs. In contrast, Theorem~\ref{worse} implies that the complexity of the whole Graver basis (resp. minimal Markov or Universal Gr{\" o}bner basis) of  any matrix $A$  can be captured by an almost $3$-uniform hypergraph, that is a hypergraph whose edges have cardinality at most $3$. In Section~\ref{sec:complexity2} there will be a stronger connection but for matrices $A$ defining positively graded toric ideals.

We start first with an example, which shows the details of the general construction.

\begin{Example}\label{toric->hyper}
{\em Let 
\[
A=\left( \begin{array}{cccc}
-1 & -1 & 2 & 2\\
-2 & 2 & -1 & 0  
\end{array} \right),
\]
be the matrix whose columns are denoted by ${\bf a}_1,\ldots,{\bf a}_4$. We construct a hypergraph $\MH=(V,\ME)$ such that the bouquet graph of $\MH$ has four bouquets with bases $\ME_{U_1},\ldots,\ME_{U_4}$, and with the property that the corresponding subbouquet ideal (associated to the vectors ${\bf a}_{\ME_{U_1}},\ldots,{\bf a}_{\ME_{U_4}}$) is $I_A$. The latter will follow since the vectors ${\bf a}_{\ME_{U_1}},\ldots,{\bf a}_{\ME_{U_4}}$ are ``essentially" the vectors ${\bf a}_1,\ldots,{\bf a}_4$, in the sense that each ${\bf a}_{\ME_{U_i}}\in\ZZ^{|V|}$ is just the natural embedding of ${\bf a}_i$ in $\ZZ^{|V|}$ with the other coordinates $0$. The construction of $\MH$ is carried out in three steps. 

{\bf Step 1.} Every non-zero entry of the matrix $A$ is used to construct a sunflower, which will be the building blocks of the desired hypergraph. Precisely, for a positive entry $\lambda$ of the matrix $A$ we consider a $3$-uniform sunflower with one core vertex and $|\lambda|$ petals, while for a negative entry $\lambda$ we consider the sunflower with one core vertex and $|\lambda|+1$ petals, which is almost 3-uniform, meaning that only one petal has two vertices, the other have three. In the particular case of our example the matrix $A$ has three different nonzero entries $-1,-2,2$, and thus we have two almost $3$-uniform sunflowers and one $3$-uniform sunflower pictured below, see Figure~\ref{sunflowers}.

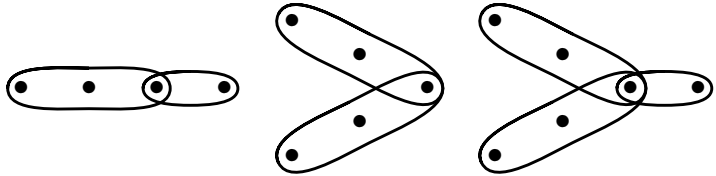
\begin{figure}[hbt]
\label{sunflowers}
\begin{center}
\psset{unit=0.9cm}
\begin{pspicture}(0,0.75)(12,3.5)
 
 \rput(1,2){$\bullet$}
  \rput(2,2){$\bullet$}
 \rput(4,2){$\bullet$}
 \rput(3,2){$\bullet$}
 
\psecurve[linewidth=1.1pt, linecolor=black](4.2,2)(3.5,2.25)(2.8,2)(3.5,1.75)(4.2,2)(3.5,2.25)(2.8,2)(3.5,1.75)
\psecurve[linewidth=1.1pt, linecolor=black](3.2,2)(2,2.3)(0.8,2)(2,1.7)(3.2,2)(2,2.3)(0.8,2)(2,1.7)

 \rput(5,1){$\bullet$}
 \rput(5,3){$\bullet$}
 \rput(6,1.5){$\bullet$}
 \rput(6,2.5){$\bullet$}
 \rput(7,2){$\bullet$}
 
\psecurve[linewidth=1.1pt, linecolor=black](7.2,1.9)(6.15,2.8)(4.8,3.1)(5.85,2.2)(7.2,1.9)(6.15,2.8)(4.8,3.1)(5.85,2.2)
\psecurve[linewidth=1.1pt, linecolor=black](7.2,2.1)(5.85,1.8)(4.8,0.9)(6.15,1.2)(7.2,2.1)(5.85,1.8)(4.8,0.9)(6.15,1.2)

 \rput(8,1){$\bullet$}
 \rput(8,3){$\bullet$}
 \rput(9,1.5){$\bullet$}
 \rput(9,2.5){$\bullet$}
 \rput(10,2){$\bullet$}
 \rput(11,2){$\bullet$}
 
\psecurve[linewidth=1.1pt, linecolor=black](10.2,1.9)(9.15,2.8)(7.8,3.1)(8.85,2.2)(10.2,1.9)(9.15,2.8)(7.8,3.1)(8.85,2.2)
\psecurve[linewidth=1.1pt, linecolor=black](10.2,2.1)(8.85,1.8)(7.8,0.9)(9.15,1.2)(10.2,2.1)(8.85,1.8)(7.8,0.9)(9.15,1.2)
\psecurve[linewidth=1.1pt, linecolor=black](11.2,2)(10.5,2.25)(9.8,2)(10.5,1.75)(11.2,2)(10.5,2.25)(9.8,2)(10.5,1.75)

\end{pspicture}
\end{center}
\caption{The sunflowers corresponding to matrix entries $-1$ (left), $2$ (center), and $-2$ (right).}
\label{sunflowers}
\end{figure}

{\bf Step 2.} Next we construct for each column $\ab_j$ a connected matched-petal partitioned-core sunflower $\MH_j$. For this we use the sunflowers constructed in {\bf Step 1} in the following way: if $a_{ij}>0$ we use the previously constructed sunflower, otherwise the sunflower obtained by deleting the petal with two vertices from the corresponding sunflower. Finally, $\MH_j$ consists of these disjoint 3-uniform sunflowers, and a perfect matching on the union of non-core vertices. Then we add to $\MH_j$ the deleted petals with two vertices to obtain $\MH'_j$. For example, since ${\bf a}_1=(-1,-2)$ we take the sunflowers corresponding to $-1$ and $-2$ from {\bf Step 1} (see the leftmost picture of Figure~\ref{matched-petal}), then delete the petals with two vertices to construct the connected matched-petal partition-core sunflower $\MH_1$ (see the middle picture of Figure~\ref{matched-petal}) and finally, add the two petals of cardinality 2 to obtain $\MH'_1$ (the rightmost picture of Figure~\ref{matched-petal}).  

\begin{figure}[hbt]
\label{matched-petal}
\begin{center}
\psset{unit=0.9cm}
\begin{pspicture}(0,0.75)(14,5.5)
 
 \rput(1,5){$\bullet$}
 \rput(2,5){$\bullet$}
 \rput(3,5){$\bullet$}
 \rput(4,5){$\bullet$}
  \rput(2.5,3.75){$+$}
  
 \rput(1,4.5){\tiny $v_2(11)$}
 \rput(2,4.5){\tiny $v_1(11)$}
 \rput(3,4.5){\tiny $u(11)$}
 \rput(4,4.5){\small $v_1$}  
 
\psecurve[linewidth=1.1pt, linecolor=black](4.2,5)(3.5,5.25)(2.8,5)(3.5,4.75)(4.2,5)(3.5,5.25)(2.8,5)(3.5,4.75)
\psecurve[linewidth=1.1pt, linecolor=black](3.2,5)(2,5.3)(0.8,5)(2,4.7)(3.2,5)(2,5.3)(0.8,5)(2,4.7)

 \rput(1,1){$\bullet$}
 \rput(1,3){$\bullet$}
 \rput(2,1.5){$\bullet$}
 \rput(2,2.5){$\bullet$}
 \rput(3,2){$\bullet$}
 \rput(4,2){$\bullet$}
 
 \rput(1,0.5){\tiny $v_4(21)$}
 \rput(1,3.5){\tiny $v_2(21)$}
 \rput(2.5,1){\tiny $v_3(21)$}
 \rput(2.5,3){\tiny $v_1(21)$}
 \rput(3.4,1.5){\tiny $u(21)$}
 \rput(4.2,1.5){\small $v_2$}

\psecurve[linewidth=1.1pt, linecolor=black](3.2,1.9)(2.15,2.8)(0.8,3.1)(1.85,2.2)(3.2,1.9)(2.15,2.8)(0.8,3.1)(1.85,2.2)
\psecurve[linewidth=1.1pt, linecolor=black](3.2,2.1)(1.85,1.8)(0.8,0.9)(2.15,1.2)(3.2,2.1)(1.85,1.8)(0.8,0.9)(2.15,1.2)
\psecurve[linewidth=1.1pt, linecolor=black](4.2,2)(3.5,2.25)(2.8,2)(3.5,1.75)(4.2,2)(3.5,2.25)(2.8,2)(3.5,1.75)

\rput(5,3){$\longrightarrow$}

 \rput(7,4){$\bullet$}
 \rput(8,4){$\bullet$}
 \rput(9,4){$\bullet$}

 \rput(7,4.5){\tiny $v_2(11)$}
 \rput(8,4.5){\tiny $v_1(11)$}
 \rput(9,4.5){\tiny $u(11)$}
 
\psecurve[linewidth=1.1pt, linecolor=black](9.2,4)(8,4.3)(6.8,4)(8,3.7)(9.2,4)(8,4.3)(6.8,4)(8,3.7)

 \rput(7,1){$\bullet$}
 \rput(7,3){$\bullet$}
 \rput(8,1.5){$\bullet$}
 \rput(8,2.5){$\bullet$}
 \rput(9,2){$\bullet$}
 
 \rput(7,0.5){\tiny $v_4(21)$}
 \rput(6.5,3.4){\tiny $v_2(21)$}
 \rput(8.5,1){\tiny $v_3(21)$}
 \rput(8.6,3){\tiny $v_1(21)$}
 \rput(9.7,2){\tiny $u(21)$}

\psecurve[linewidth=1.1pt, linecolor=black](9.2,1.9)(8.15,2.8)(6.8,3.1)(7.85,2.2)(9.2,1.9)(8.15,2.8)(6.8,3.1)(7.85,2.2)
\psecurve[linewidth=1.1pt, linecolor=black](9.2,2.1)(7.85,1.8)(6.8,0.9)(8.15,1.2)(9.2,2.1)(7.85,1.8)(6.8,0.9)(8.15,1.2)

\psecurve[linewidth=1.1pt, linecolor=blue](8.15,1.3)(7.7,2.35)(6.85,3.2)(7.3,2.15)(8.15,1.3)(7.7,2.35)(6.85,3.2)(7.3,2.15)
\psecurve[linewidth=1.1pt, linecolor=blue](8.15,2.3)(7.7,3.35)(6.85,4.2)(7.3,3.15)(8.15,2.3)(7.7,3.35)(6.85,4.2)(7.3,3.15)
\psecurve[linewidth=1.1pt, linecolor=blue](8.15,4.2)(7.35,2.65)(6.85,0.8)(7.65,2.35)(8.15,4.2)(7.35,2.65)(6.85,0.8)(7.65,2.35)

\rput(10,3){$\longrightarrow$}

 \rput(11,4){$\bullet$}
 \rput(12,4){$\bullet$}
 \rput(13,4){$\bullet$}
 \rput(14,4){$\bullet$}
  
 \rput(11,4.5){\tiny $v_2(11)$}
 \rput(12,4.5){\tiny $v_1(11)$}
 \rput(13,4.5){\tiny $u(11)$}  
 \rput(14,4.5){\small $v_1$}

\psecurve[linewidth=1.1pt, linecolor=black](14.2,4)(13.5,4.25)(12.8,4)(13.5,3.75)(14.2,4)(13.5,4.25)(12.8,4)(13.5,3.75)
\psecurve[linewidth=1.1pt, linecolor=black](13.2,4)(12,4.3)(10.8,4)(12,3.7)(13.2,4)(12,4.3)(10.8,4)(12,3.7)

 \rput(11,1){$\bullet$}
 \rput(11,3){$\bullet$}
 \rput(12,1.5){$\bullet$}
 \rput(12,2.5){$\bullet$}
 \rput(13,2){$\bullet$}
 \rput(14,2){$\bullet$}
 
 \rput(11,0.5){\tiny $v_4(21)$}
 \rput(10.5,3.4){\tiny $v_2(21)$}
 \rput(12.5,1){\tiny $v_3(21)$}
 \rput(12.6,3){\tiny $v_1(21)$}
 \rput(13.4,1.5){\tiny $u(21)$} 
 \rput(14.2,1.5){\small $v_2$}

\psecurve[linewidth=1.1pt, linecolor=black](13.2,1.9)(12.15,2.8)(10.8,3.1)(11.85,2.2)(13.2,1.9)(12.15,2.8)(10.8,3.1)(11.85,2.2)
\psecurve[linewidth=1.1pt, linecolor=black](13.2,2.1)(11.85,1.8)(10.8,0.9)(12.15,1.2)(13.2,2.1)(11.85,1.8)(10.8,0.9)(12.15,1.2)
\psecurve[linewidth=1.1pt, linecolor=black](14.2,2)(13.5,2.25)(12.8,2)(13.5,1.75)(14.2,2)(13.5,2.25)(12.8,2)(13.5,1.75)

\psecurve[linewidth=1.1pt, linecolor=blue](12.15,1.3)(11.7,2.35)(10.85,3.2)(11.3,2.15)(12.15,1.3)(11.7,2.35)(10.85,3.2)(11.3,2.15)
\psecurve[linewidth=1.1pt, linecolor=blue](12.15,2.3)(11.7,3.35)(10.85,4.2)(11.3,3.15)(12.15,2.3)(11.7,3.35)(10.85,4.2)(11.3,3.15)
\psecurve[linewidth=1.1pt, linecolor=blue](12.15,4.2)(11.35,2.65)(10.85,0.8)(11.65,2.35)(12.15,4.2)(11.35,2.65)(10.85,0.8)(11.65,2.35)

\end{pspicture}
\end{center}
\caption{The bouquet with basis $\MH'_1$}
\label{matched-petal}
\end{figure}

For sake of completeness, we have labeled the vertices of sunflowers from Figure~\ref{matched-petal} according to the general construction of Theorem~\ref{worse}, and we denote their edges by $e_0(11)=\{u(11),v_1\}$, $e_1(11)=\{u(11),v_1(11),v_2(11)\}$, $e_0(21)=\{u(21),v_2\}$, $e_1(21)=\{u(21),v_1(21),v_2(21)\}$, $e_2(21)=\{u(21),v_3(21),v_4(21)\}$, $e_1=\{v_2(11),v_1(21)\}$, $e_2=\{v_2(21),v_3(21)\}$, $e_3=\{v_4(21),v_1(11)\}$. One should note that there is not a unique way to choose the blue edges (in the picture $e_2,e_3,e_4$) such that they form a matching of the non-core vertices of the connected matched-petal partitioned-core sunflower $\MH_1$. Also, we see that the hypergraph $\MH'_1$ constructed is a bouquet with basis the set $U_1$ of all vertices except $v_1,v_2$. Similarly one repeats the construction for each one of the column vectors of $A$, that is ${\bf a}_2,{\bf a}_3,{\bf a}_4$, obtaining the hypergraphs depicted in Figure~\ref{all_matched_petal}. Note that all four hypergraphs $\MH'_1,\ldots,\MH'_4$ obtained through the previous construction are bouquets with basis $U_1,\ldots,U_4$, where each $U_i$ is just the set of vertices of $\MH'_i$ except $v_1,v_2$.  

\begin{figure}
\begin{center}
\psset{unit=0.9cm}
\begin{pspicture}(0,0.75)(13,4.5)

 \rput(1,4){$\bullet$}
 \rput(2,4){$\bullet$}
 \rput(3,4){$\bullet$}
 \rput(4,4){$\bullet$}
 \rput(1,4.5){\tiny $v_2(12)$}
 \rput(2,4.5){\tiny $v_1(12)$}
 \rput(3,4.5){\tiny $u(12)$}
 \rput(4,4.5){\small $v_1$}

\psecurve[linewidth=1.1pt, linecolor=black](4.2,4)(3.5,4.25)(2.8,4)(3.5,3.75)(4.2,4)(3.5,4.25)(2.8,4)(3.5,3.75)
\psecurve[linewidth=1.1pt, linecolor=black](3.2,4)(2,4.3)(0.8,4)(2,3.7)(3.2,4)(2,4.3)(0.8,4)(2,3.7)

 \rput(1,1){$\bullet$}
 \rput(1,3){$\bullet$}
 \rput(2,1.5){$\bullet$}
 \rput(2,2.5){$\bullet$}
 \rput(3,2){$\bullet$}
 
  \rput(1,0.5){\tiny $v_4(22)$}
 \rput(0.5,3.4){\tiny $v_2(22)$}
 \rput(2.5,1){\tiny $v_3(22)$}
 \rput(2.5,3){\tiny $v_1(22)$}
 \rput(3.5,2){\small $v_2$}

\psecurve[linewidth=1.1pt, linecolor=black](3.2,1.9)(2.15,2.8)(0.8,3.1)(1.85,2.2)(3.2,1.9)(2.15,2.8)(0.8,3.1)(1.85,2.2)
\psecurve[linewidth=1.1pt, linecolor=black](3.2,2.1)(1.85,1.8)(0.8,0.9)(2.15,1.2)(3.2,2.1)(1.85,1.8)(0.8,0.9)(2.15,1.2)

\psecurve[linewidth=1.1pt, linecolor=blue](2.15,1.3)(1.7,2.35)(0.85,3.2)(1.3,2.15)(2.15,1.3)(1.7,2.35)(0.85,3.2)(1.3,2.15)
\psecurve[linewidth=1.1pt, linecolor=blue](2.15,2.3)(1.7,3.35)(0.85,4.2)(1.3,3.15)(2.15,2.3)(1.7,3.35)(0.85,4.2)(1.3,3.15)
\psecurve[linewidth=1.1pt, linecolor=blue](2.15,4.2)(1.35,2.65)(0.85,0.8)(1.65,2.35)(2.15,4.2)(1.35,2.65)(0.85,0.8)(1.65,2.35)

 \rput(6,1){$\bullet$}
 \rput(7,1){$\bullet$}
 \rput(8,1){$\bullet$}
 \rput(9,1){$\bullet$}
 \rput(6,0.5){\tiny $v_2(23)$}
 \rput(7,0.5){\tiny $v_1(23)$}
 \rput(8,0.5){\tiny $u(23)$}
 \rput(9,0.5){\small $v_2$}

\psecurve[linewidth=1.1pt, linecolor=black](9.2,1)(8.5,1.25)(7.8,1)(8.5,0.75)(9.2,1)(8.5,1.25)(7.8,1)(8.5,0.75)
\psecurve[linewidth=1.1pt, linecolor=black](8.2,1)(7,1.3)(5.8,1)(7,0.7)(8.2,1)(7,1.3)(5.8,1)(7,0.7)

 \rput(6,2){$\bullet$}
 \rput(6,4){$\bullet$}
 \rput(7,2.5){$\bullet$}
 \rput(7,3.5){$\bullet$}
 \rput(8,3){$\bullet$}
 
 \rput(5.4,1.7){\tiny $v_4(13)$}
 \rput(6,4.5){\tiny $v_2(13)$}
 \rput(7.5,2){\tiny $v_3(13)$}
 \rput(7.5,4){\tiny $v_1(13)$}
 \rput(8.5,3){\small $v_1$} 
 
\psecurve[linewidth=1.1pt, linecolor=black](8.2,2.9)(7.15,3.8)(5.8,4.1)(6.85,3.2)(8.2,2.9)(7.15,3.8)(5.8,4.1)(6.85,3.2)
\psecurve[linewidth=1.1pt, linecolor=black](8.2,3.1)(6.85,2.8)(5.8,1.9)(7.15,2.2)(8.2,3.1)(6.85,2.8)(5.8,1.9)(7.15,2.2)

\psecurve[linewidth=1.1pt, linecolor=blue](7.15,2.3)(6.7,3.35)(5.85,4.2)(6.3,3.15)(7.15,2.3)(6.7,3.35)(5.85,4.2)(6.3,3.15)
\psecurve[linewidth=1.1pt, linecolor=blue](7.15,0.8)(6.7,1.65)(5.85,2.2)(6.3,1.35)(7.15,0.8)(6.7,1.65)(5.85,2.2)(6.3,1.35)
\psecurve[linewidth=1.1pt, linecolor=blue](7.15,3.7)(6.3,2.4)(5.85,0.8)(6.7,2.10)(7.15,3.7)(6.3,2.4)(5.85,0.8)(6.7,2.10)

 \rput(11,1.5){$\bullet$}
 \rput(11,3.5){$\bullet$}
 \rput(12,2){$\bullet$}
 \rput(12,3){$\bullet$}
 \rput(13,2.5){$\bullet$}
 
 \rput(11,1){\tiny $v_4(14)$}
 \rput(11,4){\tiny $v_2(14)$}
 \rput(12.5,1.5){\tiny $v_3(14)$}
 \rput(12.5,3.5){\tiny $v_1(14)$}
 \rput(13.5,2.5){\small $v_1$}

\psecurve[linewidth=1.1pt, linecolor=black](13.2,2.4)(12.15,3.3)(10.8,3.6)(11.85,2.7)(13.2,2.4)(12.15,3.3)(10.8,3.6)(11.85,2.7)
\psecurve[linewidth=1.1pt, linecolor=black](13.2,2.6)(11.85,2.3)(10.8,1.4)(12.15,1.7)(13.2,2.6)(11.85,2.3)(10.8,1.4)(12.15,1.7)
\psecurve[linewidth=1.1pt, linecolor=blue](11,1.3)(11.35,2.5)(11,3.7)(10.65,2.5)(11,1.3)(11.35,2.5)(11,3.7)(10.65,2.5)
\psecurve[linewidth=1.1pt, linecolor=blue](12,1.8)(12.2,2.5)(12,3.2)(11.8,2.5)(12,1.8)(12.2,2.5)(12,3.2)(11.8,2.5)

\end{pspicture}
\end{center}
\caption{The other 3 bouquets with basis corresponding to ${\bf a}_2,{\bf a}_3,{\bf a}_4$.}
\label{all_matched_petal}
\end{figure}
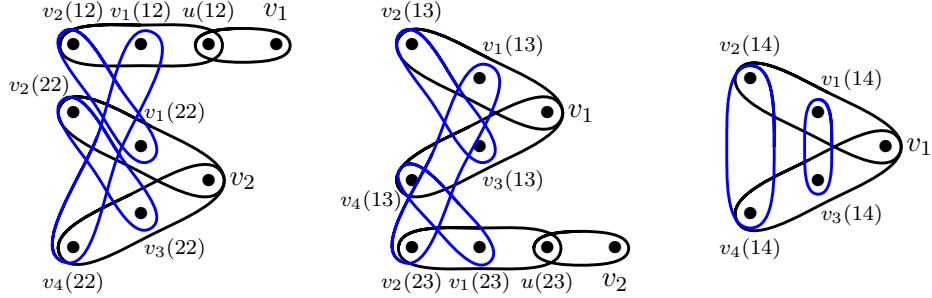

{\bf Step 3} Finally, consider the hypergraph $\MH=(V,\ME)$ with $|V|=28$ and $|\ME|=26$, obtained from $\MH'_1,\ldots,\MH'_4$ by taking the vertex set to be the union of the set of vertices of $\MH'_1,\ldots,\MH'_4$, and the set of edges is the (disjoint) union of the set of edges of $\MH'_1,\ldots,\MH'_4$. Furthermore, we compute the subbouquet vectors of $\MH'_1,\ldots,\MH'_4$. For this, we analyze in detail only the first hypergraph $\MH'_1$ which is a bouquet with basis $U_1$, constructed in Figure~\ref{matched-petal}, the others being computed similarly. We assume that the column vectors of the incidence matrix of $\MH$ are indexed such that the first eight correspond, in this order, to the edges $e_1(11),e_0(11),e_1(21),e_2(21),e_0(21),e_1,e_2,e_3$, while the first two rows are indexed by the vertices $v_1$ and $v_2$. Then $\cb_{\MH'_1}=(1, -1, 1, 1, -2, -1, -1, -1, {\bf 0})\in\ZZ^{26}$ and $$\ab_{\MH'_1}=e_1(11)-e_0(11)+e_1(21)+e_2(21)-2e_0(21)-e_1-e_2-e_3=(-1,-2,{\bf 0})\in\ZZ^{28}.$$

Analogously, with respect to a similar order for the rest of the edges of the other three bouquets, we obtain the following
${\bf c}_{\MH'_2}=({\bf 0},1,-1,1,1,-1,-1,-1,{\bf 0},{\bf 0})$, ${\bf c}_{\MH'_3}=({\bf 0},{\bf 0},1,1,1,-1,-1,-1,-1,{\bf 0})$ and ${\bf c}_{\MH'_4}=({\bf 0},{\bf 0}, {\bf 0}, 1,1,-1,-1)\in\ZZ^{26}$. Therefore the other three bouquet vectors are ${\bf a}_{\MH'_2}=(-1,2,{\bf 0})$, ${\bf a}_{\MH'_3}=(2,-1,{\bf 0})$ and ${\bf a}_{\MH'_4}=(2,0,{\bf 0})\in\ZZ^{28}$. Since the set of edges of $\MH$ can be partitioned into four bouquets with bases $\MH_1',\ldots,\MH_4'$, the ideal $I_{\MH}$ is strongly robust, by Corollary~\ref{no_free}. By Theorem~\ref{all_is_well}, the bijective correspondence between the Graver basis of $I_A$ and the Graver basis of $I_{\MH}$ applies in the following way: to the vector ${\bf u}=(1,3,4,-2)\in\Gr(A)$ corresponds the vector $B({\bf u})$
\[
(1, -1, 1, 1, -2, -1, -1, -1| 3,-3,3,3,-3,-3,-3| 4,4,4,-4,-4,-4,-4| 2,2,-2,-2). 
\]    

There are seven elements in $\Gr({\MH})$.}
\end{Example}

 For the next construction we may assume that the matrix $A$ has no zero column or row.  Indeed, if, say, $\ab_1={\bf 0}$, then $I_A=(I_{A'},1-x_1)$, where $I_{A'}\subset K[x_2,\ldots,x_n]$, and if, say, the first row is zero, then $I_A=I_{A'}$, where $A'=[\ab'_1,\ldots,\ab'_n]$, with $\ab'_j=(a_{2j},\ldots,a_{mj})\subset\ZZ^{m-1}$ for all $j$.    

\begin{Theorem}\label{worse}
Given any integer matrix $A$ without any zero row or zero column,  there exists an almost 3-uniform hypergraph $\MH=(V,\ME)$ such that: 
\begin{enumerate}
\item[(1)] There is a bijective correspondence, ${\bf u}\mapsto B({\bf u})$, between $\Ker_{\ZZ}(A)$, $\Gr(A)$ and $\MC(A)$ and $\Ker_{\ZZ}(\MH)$, $\Gr(\MH)$ and $\MC(A)$, respectively. 
\item[(2)] For every ${\bf u}\in\Gr(A)$ we have $|\ub|_1\leq |B(\ub)|_1$, where $|\ub|_1=\sum_{i=1}^n |u_i|$ represents the $1$-norm of the vector $\ub$. 
\item[(3)] The toric ideal $I_{\MH}$ is strongly robust.
\end{enumerate}
\end{Theorem}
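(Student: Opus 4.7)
The plan is to generalize the three-step construction from Example~\ref{toric->hyper} to an arbitrary matrix $A=[a_{ij}]\in\ZZ^{m\times n}$ without zero rows or columns, arranging the hypergraph $\MH$ so that its edge set partitions into $n$ bouquets with bases $\MH'_1,\ldots,\MH'_n$, one per column of $A$. Fix $m$ common ``row vertices'' $v_1,\ldots,v_m$. For each nonzero entry $a_{ij}$, introduce $2|a_{ij}|$ fresh petal vertices $v_k(ij)$ and build a sunflower $S_{ij}$: if $a_{ij}>0$, its core is $v_i$ and its $a_{ij}$ petals are $\{v_i,v_{2k-1}(ij),v_{2k}(ij)\}$; if $a_{ij}<0$, introduce an auxiliary core vertex $u(ij)$, take $|a_{ij}|$ three-element petals $\{u(ij),v_{2k-1}(ij),v_{2k}(ij)\}$, and append a 2-vertex external petal $\{u(ij),v_i\}$. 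For each column $j$, add a perfect matching $M_j$ on the union of all column-$j$ petal vertices, chosen so that the $s_j:=\sum_i|a_{ij}|$ regular petals, regarded as nodes of a graph whose edges are the matching pairs that bridge petals, form a connected graph. Such a matching always exists: cycle the $s_j$ petals and pair one endpoint of each petal with an endpoint of the next petal in the cycle. Let $\MH'_j$ be the union of all column-$j$ sunflowers, $M_j$, and the external 2-petals; set $\MH:=\bigcup_j\MH'_j$ with row vertices identified. All edges have size at most $3$ and size-$3$ petals exist since $A\neq 0$, so $\MH$ is almost $3$-uniform.

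The technical heart is verifying each $\MH'_j$ is a bouquet with basis $U_j:=V(\MH'_j)\setminus\{v_1,\ldots,v_m\}$, i.e.\ the toric ideal of the multi-hypergraph $(U_j,(U_j)_\ME)$ is principal with a generator $\cb_{\MH'_j}$ whose support is all of $\ME(\MH'_j)$. Set up balance equations at each $u\in U_j$: every non-core vertex $v_k(ij)$ lies in exactly one regular petal and one matching edge, forcing these two coefficients to be opposite; connectedness of $M_j$ then propagates the constraint so all regular petals share a single coefficient $\alpha$ and all matching edges carry $-\alpha$. For each $a_{ij}<0$, the core $u(ij)$ lies in $|a_{ij}|$ petals and in the restricted external petal (which becomes the singleton $\{u(ij)\}$ on $U_j$); balance at $u(ij)$ forces the external-petal coefficient to be $-|a_{ij}|\alpha = a_{ij}\alpha$. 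Hence the kernel is $1$-dimensional and generated by a primitive vector $\cb_{\MH'_j}$ supported on the whole of $\ME(\MH'_j)$, proving the bouquet-with-basis property.

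Normalizing $\alpha=1$, a direct computation shows that $\ab_{\MH'_j}$ equals $\ab_j$ extended by zeros on the non-row coordinates: at $v_i$ it collects $a_{ij}$, coming either from the $a_{ij}$ positive petals around $v_i$ or from the external petal carrying coefficient $a_{ij}$. Hence the subbouquet matrix $A_B=[\ab_{\MH'_1},\ldots,\ab_{\MH'_n}]$ is just $A$ padded with zero rows, so $\Ker_\ZZ(A_B)=\Ker_\ZZ(A)$. Part~(1) then follows from Theorem~\ref{all_is_well} applied to the subbouquet decomposition $\{\MH'_j\}$, which simultaneously identifies kernels, Graver bases and circuits. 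Part~(2) follows from the disjointness of the supports of the $\cb_{\MH'_j}$: for $\ub\in\Gr(A)$, $|B(\ub)|_1=\sum_j|u_j|\cdot|\cb_{\MH'_j}|_1\geq\sum_j|u_j|=|\ub|_1$ since each $|\cb_{\MH'_j}|_1\geq 1$. Part~(3) follows from Corollary~\ref{no_free}, since the edges of $\MH$ partition into the bouquets with bases $\MH'_1,\ldots,\MH'_n$.

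The main obstacle lies in the previous step: ensuring the kernel of $(U_j,(U_j)_\ME)$ is exactly $1$-dimensional, which fails for a careless matching (e.g.\ pairing both non-core vertices of a single petal leaves that petal's coefficient independent of the others). The explicit ``cycle through all $s_j$ petals'' construction of $M_j$ is what makes the propagation go through. Once this combinatorial ingredient is in place, the bouquet-matrix computation and the deduction of (1)--(3) reduce to routine invocations of Theorem~\ref{all_is_well}, Theorem~\ref{bouq_basis_classif}, and Corollary~\ref{no_free}.
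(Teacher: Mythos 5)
Your construction is the same as the paper's: one sunflower per nonzero entry (with an auxiliary core $u(ij)$ and a $2$-edge $\{u(ij),v_i\}$ for negative entries), a cyclic perfect matching on the non-core vertices of each column to force connectivity, and then Theorem~\ref{all_is_well} plus Corollary~\ref{no_free} to conclude (1) and (3). Your explicit balance-propagation argument that each $\MH'_j$ is a bouquet with basis, and your support-disjointness argument for (2), correctly fill in details the paper leaves implicit (deferring to Theorem~\ref{connected_sunflower}), so the proposal is correct and essentially identical in approach.
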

\begin{proof}
Let $A=[{\bf a}_1,\ldots,{\bf a}_n]\in\ZZ^{m\times n}$. We will  construct  a hypergraph $\MH$ whose \emph{subbouquet ideal} is $I_A$, and with the property that all its non-free subbouquets are mixed. This will imply at once conditions (1) and (3), by Theorem~\ref{all_is_well} and   Corollary~\ref{no_free}. To this end, let $\{v_1, \dots, v_m\}$ be a set of vertices, and for each nonzero entry of the matrix $a_{ij}$ we introduce the following new vertices: 
\begin{enumerate}
 \item[(1)] if $a_{ij}>0$ the set $\{v_1(ij),v_2(ij), \cdots, v_{2a_{ij}}(ij)\}$ of $2a_{ij}$ vertices,
 \item[(2)] if $a_{ij}<0$ the set $\{u(ij), v_1(ij),v_2(ij), \cdots, v_{-2a_{ij}}(ij)\}$ of $-2a_{ij}+1$ vertices,
 \end{enumerate}
as well as the following edges: 
\begin{enumerate}
 \item[(3)] if $a_{ij}>0$ the $a_{ij}$ edges $e_s(ij)=\{v_i, v_{2s-1}(ij), v_{2s}(ij)\}$, where $1\leq s\leq a_{ij}$,
 \item[(4)] if $a_{ij}<0$ the $-a_{ij}+1$ edges $e_s(ij)=\{u(ij), v_{2s-1}(ij), v_{2s}(ij)\}$, where $1\leq s\leq -a_{ij}$ and $e_0(ij)=\{v_i, u(ij)\}$.
\end{enumerate}

Note that for each $a_{ij}\neq 0$ the hypergraph $\MS_{ij}$, on the set of vertices  defined in item (1) (item (2), respectively) with the set of edges defined in item (3) (item (4), respectively) is a sunflower with the core $v_i$ ($u(ij)$, respectively). Furthermore, each sunflower $\MS_{ij}$ is either a 3-uniform sunflower if $a_{ij}>0$ or an almost 3-uniform sunflower if $a_{ij}<0$ (by almost we mean that all the edges have three vertices except $e_0(ij)$, which has only two vertices). In addition, for any fixed $j$ the sunflowers $\MS_{1j},\ldots,\MS_{mj}$ are vertex disjoint by definition. For each nonzero column $\ab_j=(a_{1j},\ldots,a_{mj})$ we construct first a connected matched-petal partitioned-core sunflower $\MH_j=(V_j,\ME_j)$ on the set of vertices
\[
V_j=\bigcup_{i:a_{ij}>0} \{v_1(ij),v_2(ij),\ldots, v_{2a_{ij}}(ij), v_i\}\cup \bigcup_{i:a_{ij}<0}\{v_1(ij),v_2(ij),\ldots, v_{-2a_{ij}}(ij), u(ij)\}, 
\]
with the core vertices 
\[
C_j=\bigcup_{i:a_{ij}>0}\{v_i\}\cup\bigcup_{i:a_{ij}<0}\{u(ij)\}.
\]

 For this is enough to describe the construction of $\MH_1=(V_1,\ME_1)$, the others being similar. Consider all nonzero entries of the column $\ab_1=(a_{11},\ldots,a_{m1})$. To each nonzero $a_{i1}$ we associate the sunflower $\MS_{i1}$ if $a_{i1}>0$, or the sunflower denoted by $\MS_{i1}\setminus\{e_0(i1)\}$ obtained from $\MS_{i1}$ by removing the edge $e_0(i1)$, if $a_{i1}<0$. Note that $\MS_{i1}\setminus\{e_0(i1)\}$ is a 3-uniform sunflower with core the set $\{u(i1)\}$, and its vertex set is obtained from the vertex set of $\MS_{i1}$ by removing $v_i$. The matched-petal partitioned-core sunflower $\MH_1$ with vertex set $V_1$, consists of the vertex-disjoint sunflowers $\MS_{i1}$ (for those $i$ with $a_{i1}>0$) and $\MS_{i1}\setminus\{e_0(i1)\}$  (for those $i$ with $a_{i1}<0$), and with the following perfect matching on the set $V_j\setminus C_j$, of non-core vertices:
\[
\{v_2(11),v_3(11)\}, \ldots, \{v_{2|a_{11}|-2}(11),v_{2|a_{11}|-1}(11)\}, \{v_{2|a_{11}|}(11),v_1(21)\},
\]
\[
 \{v_2(21),v_3(21)\}, \ldots, \{v_{2|a_{m1}|-2}(m1),v_{2|a_{m1}|-1}(m1)\}, \{v_{2|a_{m1}|}(m1),v_1(11)\},         
\]
where for convenience of notation we assumed that all $a_{i1}\neq 0$ (see for an example the blue edges from Figures~\ref{matched-petal},~\ref{all_matched_petal}). This perfect matching on the non-core vertices ensures that $\MH_j=(V_j,\ME_j)$ is a connected matched-petal partitioned-core sunflower.  

Then we ``extend" the hypergraph $\MH_j$ to the hypergraph $\MH'_j=(V'_j,\ME'_j)$, where $V'_j=V_j\cup \{v_i: a_{ij}<0\}$ and $\ME'_j=\ME_j\cup \{e_0(ij): a_{ij}<0\}$, which turns out to be a bouquet with basis the set $U_j=V'_j\setminus\{v_i: a_{ij}\neq 0\}$. Alternatively, we can write $\MH'_j=\ME_{U_j}$ for all $j=1,\ldots,n$. Finally, the hypergraph $\MH=(V,\ME)$ we are looking for is obtained from $\MH'_1,\ldots,\MH'_n$ as follows $V=\bigcup_{j=1}^n V'_j$, and $\ME=\bigcup_{j=1}^n \ME'_j$. Note that since $A$ has no zero row, then $\{v_1,\ldots,v_m\}\subset V$. Since each $\MH'_j$ is a bouquet with basis, then it follows immediately from construction that $\MH$ has $n$ bouquets with bases. Moreover, the resulting bouquet vector of $\MH'_j=\ME_{U_j}$ is $\ab_{\ME_{U_j}}=(\ab_j,{\bf 0})\in\ZZ^{|V|}$ and thus we obtain that the subbouquet ideal of $I_{\MH}$ is $I_A$, as desired.  \qed

\end{proof}


 The following two results were inspired by the universality results of \cite{DO}.  In some sense, the following two corollaries strengthen \cite[Theorem 1.2]{DO} and \cite[Corollaries 2.1,2.2]{DO}. There, the motivation from algebraic statistics was to  show that, for a particular $0/1$ matrix of a $3$-uniform hypergraph encoding a model on  three-dimensional contingency tables,   as two of the dimensions of the  table grow, there cannot be a universal upper bound on the degrees of minimal Markov bases. To show this, it is enough for \cite{DO} to construct one element of given arbitrarily high degree. 
The results below differ in three ways: 1) the underlying incidence matrices of the almost $3$-uniform hypergraphs are $0-1$ matrices with at most three ones on each column; 2) our statement holds true for all binomials in any minimal Markov basis, and not only for one binomial; 3) since the toric ideal constructed is strongly robust then the statement is also valid for Graver basis, indispensable binomials, universal Gr\" obner basis respectively. 
The reader should note that  the two classes of matrices considered in \cite{DO} and considered here have in common only the fact that they are $0-1$ matrices with at most three ones on each column.

\begin{Corollary}\label{universality_markov}
For any vectors $d_1,\ldots,d_r \in\ZZ^m$ there exists an almost 3-uniform hypergraph $\MH$ with its toric ideal strongly robust such that all of the elements of the minimal Markov basis of $\MH$ restricted to some of its entries cover the whole set $\{d_1,\ldots,d_r\}$. 
\end{Corollary}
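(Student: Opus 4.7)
The plan is to construct a specific integer matrix $A$ whose Graver basis contains, for each $k$, a distinguished vector whose ``tail'' equals $d_k$, then invoke Theorem~\ref{worse} to encode $I_A$ by a hypergraph, and finally pinpoint $m$ edges of $\MH$ on which the corresponding Markov basis elements restrict to $d_1,\ldots,d_r$.

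After discarding any zero vectors from the list $d_1,\ldots,d_r$ (which are handled trivially), I would take
\[
A=[\,d_1,\ldots,d_r\mid -\mathbf{e}_1,\ldots,-\mathbf{e}_m\,]\in\ZZ^{m\times(r+m)},
\]
where $\mathbf{e}_j$ denotes the $j$-th standard basis vector of $\ZZ^m$. Then $A$ has no zero column and no zero row (row $i$ contains a $-1$ in position $r+i$), so Theorem~\ref{worse} applies. For each $k\in\{1,\ldots,r\}$, set $u_k:=(\mathbf{e}_k,d_k)\in\ZZ^{r+m}$; then $Au_k=d_k-d_k=\mathbf{0}$, so $u_k\in\Ker_\ZZ(A)$. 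I claim $u_k$ is primitive: any $v=(x,y)\in\Ker_\ZZ(A)$ with $v^+\leq u_k^+$ and $v^-\leq u_k^-$ must have $x=\alpha\mathbf{e}_k$ for some $\alpha\in\{0,1\}$ (from the componentwise bounds on $x$), and the kernel relation $y=\sum_i x_i d_i$ forces $y=\alpha d_k$, so $v\in\{\mathbf{0},u_k\}$. Hence $u_k\in\Gr(A)$.

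Applying Theorem~\ref{worse} yields an almost $3$-uniform hypergraph $\MH=(V,\ME)$ with $I_\MH$ strongly robust together with the bijection $u\mapsto B(u)$ between $\Gr(A)$ and $\Gr(\MH)$. Since $I_\MH$ is strongly robust, Proposition~\ref{gen_no_free} identifies its Graver basis with its (unique) minimal Markov basis; in particular, $B(u_k)$ is a minimal Markov basis element of $I_\MH$ for every $k$. By Theorem~\ref{all_is_well} combined with the bouquet-with-basis decomposition from the proof of Theorem~\ref{worse},
\[
B(u_k)=\cb_{\ME_{U_k}}+\sum_{j=1}^{m}d_{k,j}\,\cb_{\ME_{U_{r+j}}}.
\]

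The main step is to identify, for each $j$, a distinguished edge $E_j^*$ inside the bouquet $\MH'_{r+j}$ built from the column $-\mathbf{e}_j$ at which $(\cb_{\ME_{U_{r+j}}})_{E_j^*}=+1$. Tracing the construction of Theorem~\ref{worse} for this singleton column: $\MH'_{r+j}$ has exactly three edges, namely the $2$-vertex petal $e_0(j,r+j)=\{v_j,u(j,r+j)\}$, the $3$-vertex petal $e_1(j,r+j)=\{u(j,r+j),v_1(j,r+j),v_2(j,r+j)\}$, and the matching edge $m(j,r+j)=\{v_1(j,r+j),v_2(j,r+j)\}$. A direct $3\times 3$ kernel computation on the incidence matrix of the restricted multi-hypergraph $(U_{r+j},\MU_{r+j,\ME})$ shows that $\cb_{\ME_{U_{r+j}}}$ takes the values $(1,-1,1)$ on $(e_0,e_1,m)$ and vanishes elsewhere, so one may choose $E_j^*:=e_0(j,r+j)$. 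Because the supports of $\cb_{\ME_{U_1}},\ldots,\cb_{\ME_{U_{r+m}}}$ are pairwise disjoint (they live in distinct bouquets), the restriction of $B(u_k)$ to $(E_1^*,\ldots,E_m^*)$ equals $(d_{k,1},\ldots,d_{k,m})=d_k$. This produces $r$ minimal Markov basis elements whose restrictions on these $m$ entries reproduce the prescribed set $\{d_1,\ldots,d_r\}$. The only delicate point is the explicit bouquet-vector computation for a singleton column $-\mathbf{e}_j$, which reduces to the small linear-algebra verification recorded above.
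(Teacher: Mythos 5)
Your pipeline (build an intermediate matrix $A$, exhibit primitive kernel elements with tails $d_k$, pass to a hypergraph via Theorem~\ref{worse}, use strong robustness to identify the Graver basis with the minimal Markov basis, and read off $d_k$ on the two-vertex petals $e_0(j,r+j)$) is internally sound, and your primitivity check for $u_k=(\mathbf{e}_k,d_k)$ is correct. The gap is that this only proves a weak reading of the statement: that $r$ \emph{particular} elements of the minimal Markov basis restrict to $d_1,\ldots,d_r$. The corollary is intended — as the paper's framing makes explicit (``our statement holds true for all binomials in any minimal Markov basis, and not only for one binomial'', and the universally quantified phrasing of Corollary~\ref{universality}) — to assert that \emph{every} element of the minimal Markov basis restricts, on suitable entries, to some $\pm d_i$, with all of $d_1,\ldots,d_r$ achieved. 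For your matrix $A=[\,d_1,\ldots,d_r\mid -\mathbf{e}_1,\ldots,-\mathbf{e}_m\,]$ the Graver basis is much larger than $\{u_1,\ldots,u_r\}$: already for $m=1$, $d_1=2$, $d_2=3$ the vector $(1,-1,-1)$ is primitive in $\Ker_{\ZZ}([2\ \ 3\ \ {-1}])$ and its tail $-1$ is none of $\pm d_1,\pm d_2$. Under the bijection of Theorem~\ref{worse} such elements become minimal Markov basis elements of $\MH$ whose restrictions lie outside $\{\pm d_1,\ldots,\pm d_r\}$, so the universal quantifier fails.

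The missing idea is the mechanism the paper uses to control the \emph{entire} Graver basis of the intermediate matrix: start from a unimodular matrix $D$ with $r$ columns, all bouquets non-free and every variable appearing in $\Gr(D)$ (e.g.\ the incidence matrix of a complete bipartite graph), and apply Theorem~\ref{inverse_construction} with the primitive vectors $\cb'_i=(1,d_i)$ to obtain a generalized Lawrence matrix $A$ whose subbouquet ideal is $I_D$. Unimodularity forces every element of $\Gr(D)$ to have entries in $\{0,\pm 1\}$, hence by Theorem~\ref{all_is_well} every element of $\Gr(A)$ has each coordinate block equal to $\mathbf{0}$ or $\pm(1,d_i)$; only then does Theorem~\ref{worse} produce a hypergraph in which \emph{every} minimal Markov basis element carries some $\pm d_i$ as a restriction, while the choice of $D$ guarantees all $r$ of them occur. (Two minor points in your write-up: with the paper's edge ordering and sign convention the values of $\cb_{\ME_{U_{r+j}}}$ on $(e_1,e_0,m)$ are $(1,-1,-1)$, so your distinguished entries recover $-d_k$ rather than $d_k$ — harmless, since the statement is up to the sign of a binomial; and the reduction discarding zero $d_k$'s and zero rows should be made before invoking Theorem~\ref{worse}, as you note.)
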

\begin{proof}
Let $D=[{\pmb\alpha}_1,\ldots,{\pmb\alpha}_r]\in\ZZ^{p\times r}$ be a unimodular matrix such that all of its bouquets are not free, and consider the vectors $\cb'_i=(1,{\bf d}_i)=(1,d_{i1},\ldots,d_{im})\in\ZZ^{m+1}$ for all $i=1,\ldots,r$. By definition the vectors $\cb'_1,\ldots,\cb'_r$ are primitive and thus we can apply Theorem~\ref{inverse_construction} in order to obtain a generalized Lawrence matrix $A=[\ab_1,\ldots,\ab_n]\in\ZZ^{q\times n}$ such that its subbouquet ideal equals $I_D$, and whose Graver basis equals the set of circuits and implicitly the universal Gr\" obner basis. In addition, we know that the subbouquet $B_i$ is encoded by the vectors $\ab_{B_i}=({\pmb\alpha}_i,{\bf 0},\ldots,{\bf 0})$ and $\cb_{B_i}=({\bf 0},\ldots,\cb'_i,\ldots,{\bf 0})$, for all $i=1,\ldots,r$. Since $D$ is unimodular applying \cite[Proposition 8.11]{St} we have that the Graver basis elements of $D$ are vectors with coordinates only $0,1,-1$, which in turn implies via the one-to-one correspondence of Theorem~\ref{all_is_well} 
\[
B((u'_1,\ldots,u'_r)) = \sum_{i=1}^r \cb_{B_i} u'_i = (u'_1,u'_1{\bf d}_1,\ldots,u'_r,u'_r{\bf d}_r)\in\ZZ^n
\]
that the Graver basis elements of $A$ are vectors whose coordinate nonzero blocks are either $\cb'_i$ or $-\cb'_i$.  

Finally, we apply for the matrix $A$ the construction of Theorem~\ref{worse} to obtain the hypergraph $\MH$ which has $n$ bouquets with bases $\MH'_1,\ldots,\MH'_n$, and whose toric ideal is strongly robust. Note from the proof of Theorem~\ref{worse} that the nonzero coordinates of the vectors $\cb_{\MH'_i}$ corresponding to the edges of the perfect matching on the non-core vertices are equal either to $1$ or $-1$.  Since the subbouquet ideal of $I_{\MH}$ is equal to $I_A$ it follows from the description of the Graver basis elements of $A$ that via the one-to-one correspondence 
\[
(u'_1,u'_1d_{11},\ldots,u'_1d_{1m},\ldots,u'_r,u'_rd_{r1},\ldots,u'_rd_{rm}) = (u_1,\ldots,u_n)\mapsto \sum_{j=1}^n \cb_{\MH'_j}u_j
\]    
the Graver basis elements of $\MH$ contain as subvectors at least one of $\pm d_1,\ldots,\pm d_r$. Indeed, from the construction of $A$ we have that $u'_i\in\{-1,0,1\}$ and we just substitute them in the previous displayed formula. Since $I_{\MH}$ is strongly robust then the Graver basis equals the minimal Markov basis and thus we have the desired conclusion. 

For the final part of statement, that is all of the vectors $d_1,\ldots,d_r$ appear as a support of some elements from the minimal Markov basis of $\MH$ we need to make a choice on the unimodular matrix $D$ to ensure that in the Graver basis of $I_D$ appear all of the variables. For example, we may choose $D$ to be the incidence matrix of a complete bipartite graph $K_{p,\ell}$ with $p,\ell\geq 2$ if $r$ equals its number of edges. Otherwise, we consider a complete bipartite graph $K_{p,\ell}$ with the number of edges $q>r$, let $D\in\ZZ^{p\times q}$ be its incidence matrix and consider the $q$ vectors $\cb'_1,\ldots,\cb'_r,\cb'_r,\ldots,\cb'_r$ with the vector $\cb'_r$ being repeated $q-r+1$ times. \qed

\end{proof}

Taking $d_1=\cdots=d_r=d$ in the previous corollary and switching from the vector notation of an element $\vb\in\Ker_{\ZZ}(\MH)$ to the binomial notation $x^{\vb^+}-x^{\vb^-}\in I_{\MH}$ we obtain the following

\begin{Corollary}
\label{universality}
For any vector $d\in\ZZ^n$ there exists an almost 3-uniform hypergraph $\MH$ with its toric ideal strongly robust such that all binomials in the minimal generating set of $I_H$ satisfy the following: one of its monomials restricted to a suitable subset of variables has multi-degree $d^+$ and its other monomial restricted to a suitable subset of variables has multi-degree $d^-$. 
\end{Corollary}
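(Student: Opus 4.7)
The plan is to derive this as an essentially notational consequence of Corollary~\ref{universality_markov}. First I would invoke that corollary with the constant family $d_1=d_2=\cdots=d_r=d$ (for any $r\geq 2$, with the unimodular matrix $D$ chosen as in the final paragraph of the proof of Corollary~\ref{universality_markov} so that every variable appears). This produces an almost $3$-uniform hypergraph $\MH$ whose toric ideal $I_{\MH}$ is strongly robust and whose minimal Markov basis—which coincides with the Graver basis by strong robustness—has the property that every element $\vb\in\Ker_{\ZZ}(\MH)$ contains one of $\pm d$ as a subvector: there exists a subset $S$ of coordinates with $\vb|_S=\pm d$.

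The remaining step is the translation from vector to binomial language. Suppose $\vb$ is such an element with $\vb|_S=d$. Then for every $i\in S$ we have $v_i=d_i$, and because $\vb=\vb^+-\vb^-$ is the unique expression of $\vb$ as a difference of non-negative integer vectors with disjoint support, the same holds coordinate-wise in $S$: namely $v_i^+=d_i^+$ and $v_i^-=d_i^-$ for all $i\in S$. Hence $\vb^+|_S=d^+$ and $\vb^-|_S=d^-$. Passing to the binomial $x^{\vb^+}-x^{\vb^-}\in I_{\MH}$ in the minimal generating set, the restriction of the monomial $x^{\vb^+}$ to the variables indexed by $S$ has multi-degree $d^+$, and the restriction of $x^{\vb^-}$ to the variables indexed by $S$ has multi-degree $d^-$, which is exactly the stated property. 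If instead $\vb|_S=-d$, the identical argument applies with the roles of $\vb^+$ and $\vb^-$ interchanged; this is absorbed by the phrase ``one of its monomials'' in the statement.

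I do not expect any real obstacle, since all the hard work—constructing $\MH$, guaranteeing strong robustness, and verifying that every Graver basis element of $\MH$ contains $\pm d$ as a subvector—has already been carried out in Corollary~\ref{universality_markov}. The content here is only the elementary observation that the conformal decomposition $\vb=\vb^+-\vb^-$ commutes with restriction to a subset of coordinates, so that the subvector condition on $\vb$ transfers directly to a multi-degree condition on the two monomials of the associated binomial.
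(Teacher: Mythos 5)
Your proposal is correct and follows exactly the paper's route: the paper likewise obtains this corollary by specializing Corollary~\ref{universality_markov} to $d_1=\cdots=d_r=d$ and then switching from the vector $\vb\in\Ker_{\ZZ}(\MH)$ to the binomial $x^{\vb^+}-x^{\vb^-}$. Your explicit verification that the decomposition $\vb=\vb^+-\vb^-$ commutes with restriction to a coordinate subset (with the sign case $\vb|_S=-d$ handled by swapping the two monomials) is the only content of the translation, and it is carried out correctly.
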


\section{ Hypergraphs encode all positively graded toric ideals }
\label{sec:complexity2}


This section presents a correspondence between positively graded (general) toric ideals and stable toric ideals of hypergraphs. Namely, given a positively graded $I_A$, Theorem~\ref{general_complexity} constructs a stable toric ideal of a hypergraph $\MH=\MH(A)$ that has the same combinatorial complexity and whose homological properties are preserved. In particular, the Graver bases of the ideals $I_A$ and $I_\MH$ have the same number of elements, and the same holds for their universal Gr\"obner and Markov bases, as well as indispensable binomials and circuits. Even more is true: if $I_A$ has $\ell$ different minimal Markov bases then $I_\MH$ has also $\ell$ different minimal Markov bases, and the same holds for the reduced Gr\"{o}bner bases. Contrast this with Section~\ref{sec:complexity}, where an arbitrary toric ideal may have infinitely many minimal Markov bases.

For the remainder of this section, fix the following notation: 
\[
	\Sigma_n:=\left( \begin{matrix}
	0&1&\dots&\dots&1\\1&0&1&\dots&1 \\ \vdots &\ddots&\ddots&\ddots&\vdots\\  1&\dots&1&0&1 \\ 1&\dots&\dots&1&0
	\end{matrix}\right)\in\ZZ^{n\times n}
\]
and  
\[ \varepsilon_{k,n}:=(\underbrace{1 \ \cdots 1}_{k \text{ times }} \ 0 \ \cdots \ 0)\in\ZZ^n, \text{ where } 1\leq k<n.\]

The following example illustrates a general construction which is the basis for  the proof of Theorem~\ref{general_complexity}.

\begin{Example}
{\em Given a non-negative integer matrix $D$, we seek a procedure to create a $0/1$ matrix $A$ such that there is a bijective correspondence between distinguished sets of binomials of $I_D$ and $I_A$, namely, each of: Graver basis, circuits, indispensable binomials, minimal Markov bases, reduced Gr\"obner bases (universal Gr\"obner basis).    

Let 
\[
D=(d_{ij})=\left( \begin{array}{ccccc}
1 & 3 & 2 & 0 & 1\\
3 & 2 & 1 & 3 & 2\\
3 & 0 & 2 & 2 & 1
\end{array} \right)\in\NN^{3\times 5}.
\]

Let $\delta_i:=\max_l \{d_{li}\}$ be the maximum entry in column $i$ of $D$, and $j_i:=\min\{l : d_{li}=\delta_i\}$ be the index of the first row where $\delta_i$ appears. For the given matrix $D$, these values are as follows:  $\delta_1=3$, $\delta_2=3$, $\delta_3=2$, $\delta_4=3$ and $\delta_5=2$; and $j_1=2$, $j_2=1$, $j_3=1$, $j_4=2$ and $j_5=2$. Define $\delta:=\sum_{i=1}^5 (\delta_i+1)=18$  and $l=3 - |\{j_1,\dots,j_5\}|=1$, the number of rows that do not contain any column-maximum entry $\delta_i$. We construct a 0-1 matrix $A=M_{\MH}$ of size $19\times 18$, the incidence matrix of a hypergraph $\MH$, such that its subbouquet ideal will be $I_D$. Here, $19=\delta+l$ and $18=\delta$. The matrix $A$ is constructed from $D$ in two steps.

{\bf Step 1.} Every column-maximum entry $\delta_i$ defined above will determine a set of horizontal blocks of the matrix $A$ as follows. For each  row index $k\in\{j_i\}_{i=1}^{5}$, consider the set of all $\delta_i$'s appearing on the $k$-th row of $D$. For each such $\delta_i$, construct a $(\delta_i+1)\times\delta$ block matrix by concatenating (horizontally) the following $5$ sub-blocks: block $i$ shall consist of the matrix $\Sigma_{\delta_i+1}$, while for each $l\neq i$, block $l$ shall consist of $\delta_i+1$ copies of $\varepsilon_{d_{kl},\delta_i+1}$.

 For example, the first row of $D$ contains two column-maximum entries, $\delta_2$ and $\delta_3$. The first entry will generate a row-block of size $(3+1)\times 18$ inside $A$, while the second entry will generate a row-block of size $(2+1)\times 18$. First, the entry $\delta_2=3$ requires concatenating five sub-blocks: the first  one consisting of $4=\delta_2+1$ copies of $\varepsilon_{d_{11},4}=(1 \ 0 \ 0 \ 0)$; the second one, $\Sigma_{4}$; the third one consisting of $4$ copies of $\varepsilon_{d_{13},4}=(1 \ 1 \ 0 \ 0)$; the fourth one, $4$ copies of $\varepsilon_{d_{14},4}=(0 \ 0 \ 0 \ 0)$; and the fifth, $4$ copies of  $\varepsilon_{d_{15},4}=(1 \ 0 \ 0 \ 0)$. This resulting row-block of $A$ is displayed below, with the distinguished sub-block corresponding to $\Sigma_{4}$ colored in blue.
 
\[
\left( \begin{array}{cccc|cccc|ccc|cccc|ccc}
 1 &  0 &  0 &  0 &  {\blue 0} & {\blue 1} & {\blue 1} & {\blue 1} &  1 &  1 &  0 &  0 &  0 &  0 &  0 &  1 & 0 & 0 \\
 1 &  0 &  0 &  0 &  {\blue 1} & {\blue 0} & {\blue 1} & {\blue 1} &  1 &  1 &  0 &  0 &  0 &  0 &  0 &  1 & 0 & 0 \\
 1 &  0 &  0 &  0 &  {\blue 1} & {\blue 1} & {\blue 0} & {\blue 1} &  1 &  1 &  0 &  0 &  0 &  0 &  0 &  1 & 0 & 0 \\
 1 &  0 &  0 &  0 &  {\blue 1} & {\blue 1} & {\blue 1} & {\blue 0} &  1 &  1 &  0 &  0 &  0 &  0 &  0 &  1 & 0 & 0 
\end{array} \right).
\]         
Similarly, the second entry, $\delta_3=2$, will generate the following $3\times 18$ row-block: 
 
\[
\left( \begin{array}{cccc|cccc|ccc|cccc|ccc}
 1 &  0 &  0 &  0 &  1 &  1 &  1 &  0 & {\blue 0} & {\blue 1} & {\blue 1} &  0 &  0 &  0 &  0 &  1 & 0 & 0 \\
 1 &  0 &  0 &  0 &  1 &  1 &  1 &  0 & {\blue 1} & {\blue 0} & {\blue 1} &  0 &  0 &  0 &  0 &  1 & 0 & 0 \\
 1 &  0 &  0 &  0 &  1 &  1 &  1 &  0 & {\blue 1} & {\blue 1} & {\blue 0} &  0 &  0 &  0 &  0 &  1 & 0 & 0
\end{array} \right).
\]

The remaining row indices are $j_1=j_4=j_5=2$, thus we consider the second row of $D$. It contains three $\delta_i$ values: $\delta_1,\delta_4,\delta_5$, and thus gives rise to three row-blocks of similar structure. These are the third, fourth and fifth row-blocks of the matrix $A$ in Equation~\eqref{non_mixed_hypergraph}.

{\bf Step 2.} For each row $k$ of $D$ containing none of the column-maxima $\delta_i$'s, that is, for all row indices $k\in\{1,2,3\}\setminus\{j_i\}_{i=1}^{5}$, we will create one additional row of $A$. This row will be obtained by concatenating (in this order) the following vectors $\varepsilon_{d_{k1},\delta_1+1}$, $\varepsilon_{d_{k2},\delta_2+1}$, $\varepsilon_{d_{k3},\delta_3+1}$, $\varepsilon_{d_{k4},\delta_4+1}$, $ \varepsilon_{d_{k5},\delta_5+1}$. For example, the third row of $D$ does not contain any $\delta_i$ and thus the row of $A$ obtained from concatenating the vectors $\varepsilon_{d_{31},4}=(1 \ 1 \ 1 \ 0)$, $\varepsilon_{d_{32},4}=(0 \ 0 \ 0 \ 0)$, $\varepsilon_{d_{33},3}=(1 \ 1 \ 0)$, $\varepsilon_{d_{34},4}=(1 \ 1 \ 0 \ 0)$ and $\varepsilon_{d_{35},3}=(1 \ 0 \ 0)$ is 
\[
\left( \begin{array}{cccc|cccc|ccc|cccc|ccc}
1 &  1 &  1 &  0 &  0 &  0 &  0 &  0 &  1 &  1 &  0 &  1 &  1 &  0 &  0 &  1 & 0 & 0
\end{array} \right).
\]

 Finally, the matrix $A=[\ab_1,\ldots,\ab_{18}]$ is obtained by concatenating (vertically) the two row-blocks generated by the first row of $D$, the three  row-blocks generated by the second row of $D$, and the row generated by the third row of $D$; see Equation~\eqref{non_mixed_hypergraph}. Note that the submatrix of $A$ generated by the first row of $D$ has $4+3=7$ rows, the one generated by the second row of $D$ has $4+4+3=11$ rows, and the one generated by the third row of $D$ has $1$ row. The matrix $A$ has $5$ non-mixed bouquets such that $\ab_1,\ab_2,\ab_3,\ab_4$ belong to the first bouquet $B_1$, $\ab_5,\ab_6,\ab_7,\ab_8$ to $B_2$, $\ab_9,\ab_{10},\ab_{11}$ to $B_3$, $\ab_{12},\ab_{13},\ab_{14},\ab_{15}$ to $B_4$, and $\ab_{16},\ab_{17},\ab_{18}$ to $B_5$. Moreover all nonzero components of $\cb_{B_1},\ldots,\cb_{B_5}\in\ZZ^{18}$ are 1, and thus $\ab_{B_1}=\ab_1+\cdots+\ab_4\in\ZZ^{19}$ is the transposed vector of 
\[
\left( \begin{array}{ccccccc|ccccccccccc|c}
1 &  1 &  1 &  1 &  1 &  1 &  1 &  3 &  3 &  3 &  3 &  3 &  3 &  3 &  3 &  3 & 3 & 3 & 3
\end{array} \right),
\]  
where the first block has $7$ coordinates, the second one  $11$ coordinates, and the last one $1$ coordinate. Similarly, $\ab_{B_2}=\ab_5+\cdots+\ab_8$ is the transposed vector of 
\[
\left( \begin{array}{ccccccc|ccccccccccc|c}
3 &  3 &  3 &  3 &  3 &  3 &  3 &  2 &  2 &  2 &  2 &  2 &  2 &  2 &  2 &  2 & 2 & 2 & 0
\end{array} \right),
\]
and so on. Thus $I_{A_B}=I_D$, and since $I_A$ is a stable toric ideal the desired bijective correspondence follows from Theorem~\ref{stable_toric}.
 
\begin{eqnarray}\label{non_mixed_hypergraph}
A=M_{\MH} = \left( \begin{array}{cccc|cccc|ccc|cccc|ccc}
 1 &  0 &  0 &  0 &  {\blue 0} & {\blue 1} & {\blue 1} & {\blue 1} &  1 &  1 &  0 &  0 &  0 &  0 &  0 &  1 & 0 & 0 \\
 1 &  0 &  0 &  0 &  {\blue 1} & {\blue 0} & {\blue 1} & {\blue 1} &  1 &  1 &  0 &  0 &  0 &  0 &  0 &  1 & 0 & 0 \\
 1 &  0 &  0 &  0 &  {\blue 1} & {\blue 1} & {\blue 0} & {\blue 1} &  1 &  1 &  0 &  0 &  0 &  0 &  0 &  1 & 0 & 0 \\
 1 &  0 &  0 &  0 &  {\blue 1} & {\blue 1} & {\blue 1} & {\blue 0} &  1 &  1 &  0 &  0 &  0 &  0 &  0 &  1 & 0 & 0 \\
\hline
 1 &  0 &  0 &  0 &  1 &  1 &  1 &  0 & {\blue 0} & {\blue 1} & {\blue 1} &  0 &  0 &  0 &  0 &  1 & 0 & 0 \\
 1 &  0 &  0 &  0 &  1 &  1 &  1 &  0 & {\blue 1} & {\blue 0} & {\blue 1} &  0 &  0 &  0 &  0 &  1 & 0 & 0 \\
 1 &  0 &  0 &  0 &  1 &  1 &  1 &  0 & {\blue 1} & {\blue 1} & {\blue 0} &  0 &  0 &  0 &  0 &  1 & 0 & 0 \\
\hline
{\blue 0} & {\blue 1} & {\blue 1} & {\blue 1} &  1 &  1 &  0 &  0 &  1 &  0 &  0 &  1 &  1 &  1 &  0 &  1 & 1 & 0 \\
{\blue 1} & {\blue 0} & {\blue 1} & {\blue 1} &  1 &  1 &  0 &  0 &  1 &  0 &  0 &  1 &  1 &  1 &  0 &  1 & 1 & 0 \\
{\blue 1} & {\blue 1} & {\blue 0} & {\blue 1} &  1 &  1 &  0 &  0 &  1 &  0 &  0 &  1 &  1 &  1 &  0 &  1 & 1 & 0 \\
{\blue 1} & {\blue 1} & {\blue 1} & {\blue 0} &  1 &  1 &  0 &  0 &  1 &  0 &  0 &  1 &  1 &  1 &  0 &  1 & 1 & 0 \\
\hline
 1 &  1 &  1 &  0 &  1 &  1 &  0 &  0 &  1 &  0 &  0 & {\blue 0} & {\blue 1} & {\blue 1} & {\blue 1} &  1 & 1 & 0 \\
 1 &  1 &  1 &  0 &  1 &  1 &  0 &  0 &  1 &  0 &  0 & {\blue 1} & {\blue 0} & {\blue 1} & {\blue 1} &  1 & 1 & 0 \\
 1 &  1 &  1 &  0 &  1 &  1 &  0 &  0 &  1 &  0 &  0 & {\blue 1} & {\blue 1} & {\blue 0} & {\blue 1} &  1 & 1 & 0 \\
 1 &  1 &  1 &  0 &  1 &  1 &  0 &  0 &  1 &  0 &  0 & {\blue 1} & {\blue 1} & {\blue 1} & {\blue 0} &  1 & 1 & 0 \\
\hline 
 1 &  1 &  1 &  0 &  1 &  1 &  0 &  0 &  1 &  0 &  0 &  1 &  1 &  1 &  0 & {\blue 0} & {\blue 1} & {\blue 1} \\
 1 &  1 &  1 &  0 &  1 &  1 &  0 &  0 &  1 &  0 &  0 &  1 &  1 &  1 &  0 & {\blue 1} & {\blue 0} & {\blue 1} \\
 1 &  1 &  1 &  0 &  1 &  1 &  0 &  0 &  1 &  0 &  0 &  1 &  1 &  1 &  0 & {\blue 1} & {\blue 1} & {\blue 0} \\
\hline
 1 &  1 &  1 &  0 &  0 &  0 &  0 &  0 &  1 &  1 &  0 &  1 &  1 &  0 &  0 &  1 & 0 & 0
\end{array} \right)
\end{eqnarray}
\normalsize
 }

\end{Example}

\begin{Theorem}\label{general_complexity}
Let $I_D$ be an arbitrary positively graded nonzero toric ideal. Then there exists a hypergraph $\MH$ such that there is a bijective correspondence between the Graver bases, all minimal Markov bases, all reduced Gr\"obner bases, circuits, and indispensable binomials of $I_D$ and $I_{\MH}$. Furthermore, all of the homological data of $I_{\MH}$ is inherited by $I_D$ and viceversa.
\end{Theorem}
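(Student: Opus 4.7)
The plan is to explicitly construct an incidence matrix $M_\MH$ of a hypergraph $\MH$ whose toric ideal is stable and whose bouquet ideal coincides with $I_D$. Theorem~\ref{stable_toric} will then deliver the bijective correspondence of Graver bases, minimal Markov bases, reduced Gr\"obner bases, indispensable binomials, and circuits, while \cite[Theorem 3.11]{PTV} will transfer the homological data, both ideals being positively graded.

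First I would reduce to the case $D \in \NN^{m \times n}$ with no zero columns. Positive gradedness of $I_D$ means $\Ker_{\ZZ}(D) \cap \NN^n = \{\mathbf{0}\}$, which is equivalent to the existence of $\omega \in \ZZ_{>0}^n$ in the rational row-span of $D$. Adding sufficiently large integer multiples of such an $\omega$ to each row of $D$ yields a non-negative matrix with the same integer kernel and hence the same toric ideal. Any zero columns correspond to free variables and can be set aside, so we may assume $\delta_i := \max_l d_{li} \geq 1$ for every $i$.

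Next I would construct $M_\MH$ exactly as in the preceding example. Fix, for each $i$, a row $j_i$ with $d_{j_i, i} = \delta_i$. The matrix $M_\MH$ has $n$ column-blocks of widths $\delta_1+1, \dots, \delta_n+1$, and its rows are assembled as follows: for each $k \in \{j_1, \dots, j_n\}$ and each $i$ with $j_i = k$, append a row-block of height $\delta_i+1$ in which the $i$-th column-block is $\Sigma_{\delta_i+1}$ and, for $l \neq i$, the $l$-th column-block consists of $\delta_i+1$ vertically stacked copies of $\varepsilon_{d_{kl}, \delta_i+1}$; for each $k \in \{1,\dots,m\} \setminus \{j_1,\dots,j_n\}$, append a single row $(\varepsilon_{d_{k1}, \delta_1+1} \mid \cdots \mid \varepsilon_{d_{kn}, \delta_n+1})$. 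The resulting matrix has entries in $\{0,1\}$ by construction. The claim is that the $n$ column-blocks are precisely the non-free bouquets $B_1, \dots, B_n$ of $M_\MH$, that each $\cb_{B_i}$ equals the all-ones vector on its support (so every bouquet is non-mixed and $I_{M_\MH}$ is stable), and that the sum $\ab_{B_l} = \sum_{j \in B_l} \ab_j$ has value $d_{kl}$ on every row generated by row $k$ of $D$. The last assertion is a direct computation: inside a row-block of height $\delta_i+1$ coming from row $k = j_i$, a row of $\Sigma_{\delta_i+1}$ sums to $\delta_i = d_{ki}$ for $l = i$, while a row of $\varepsilon_{d_{kl}, \delta_i+1}$ sums to $d_{kl}$ for $l \neq i$; the single-row case is analogous. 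Consequently $(M_\MH)_B$ is a row-duplication of $D$, giving $I_{(M_\MH)_B} = I_D$, after which Theorem~\ref{stable_toric} and \cite[Theorem 3.11]{PTV} complete the proof.

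The main obstacle is rigorously verifying the bouquet structure: that the column-blocks are neither merged into larger bouquets nor split into smaller ones. The coarseness (columns within the $i$-th block lie in one bouquet) is supplied by the $\Sigma_{\delta_i+1}$ sub-block, which contributes kernel vectors $e_p - e_q$ for all $p,q$ in that block, forcing Gale-equivalence with positive ratio and thus non-mixedness with $\cb_{B_i}$ all-positive. The fineness (distinct blocks give distinct bouquets) requires showing via Theorem~\ref{all_is_well} that any $\vb \in \Ker_{\ZZ}(M_\MH)$ must be constant on each column-block, so that the correspondence $\ub \mapsto B(\ub)$ restricted to $\Ker_{\ZZ}(D)$ is a bijection onto $\Ker_{\ZZ}(M_\MH)$; this forces the bouquet decomposition of $M_\MH$ to match the proposed partition exactly.
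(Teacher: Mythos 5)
Your overall strategy coincides with the paper's: the same block matrix $M_{\MH}$, the same verification that the row sums over each column-block reproduce the entries of $D$ (so the subbouquet matrix is a row-duplication of $D$), and the same final appeal to Theorem~\ref{stable_toric} and \cite[Theorem 3.11]{PTV}; your reduction to $D\in\NN^{m\times n}$ via a strictly positive vector in the rational row span is a correct substitute for the paper's citation of \cite[Corollary 7.23]{MS}. One slip in the coarseness step: $e_p-e_q$ is \emph{not} a kernel vector of $M_{\MH}$ (columns $\ab_p$ and $\ab_q$ are distinct, and a kernel vector supported on $\{p,q\}$ would say nothing about Gale-equality with positive ratio). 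What the $\Sigma_{\delta_i+1}$ sub-block actually provides is that the difference of rows $p$ and $q$ of $M_{\MH}$ is a \emph{row-span} vector supported on $\{p,q\}$ with values $-1,+1$; since the row span annihilates $\Ker_{\ZZ}(M_{\MH})$, this forces $G(\ab_p)=G(\ab_q)$, which is exactly the paper's computation with the covectors $\cb_{i,i+1}$.

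The genuine problem is the ``fineness'' step that you yourself flag as the main obstacle: it is false in general, so you would get stuck trying to prove it. Take $D=\left(\begin{smallmatrix}1&0&1\\0&1&1\end{smallmatrix}\right)$, so $I_D=(x_1x_2-x_3)$ is positively graded and nonzero. The construction produces a $6\times 6$ matrix $M_{\MH}$ of rank $5$ whose kernel is spanned by $(1,1,1,1,-1,-1)$; hence all six columns have pairwise proportional Gale rows and form a \emph{single mixed} bouquet, i.e.\ the three column-blocks merge. (Merging occurs exactly when $D$ itself has non-singleton, possibly mixed, bouquets, which positive grading does not preclude.) It is true, as you say, that every element of $\Ker_{\ZZ}(M_{\MH})$ is constant on each block and that this sets up the bijection with $\Ker_{\ZZ}(D)$ --- but that does not force the blocks to be maximal, and in the example above it visibly fails. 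Fortunately fineness is not needed: the column-blocks always form a \emph{subbouquet decomposition} of $M_{\MH}$ into free or non-mixed subbouquets --- which is all your coarseness argument establishes --- and Theorem~\ref{all_is_well} together with the version of Theorem~\ref{stable_toric} valid for subbouquet decompositions into non-mixed subbouquets (see the discussion following Theorem~\ref{stable_toric} and \cite[Section 3]{PTV}) already yields the bijections and the homological transfer. Replace ``the blocks are precisely the bouquets'' by ``the blocks are free or non-mixed subbouquets forming a subbouquet decomposition whose subbouquet matrix defines $I_D$'', and the proof closes.
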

\begin{proof}
Since $I_D$ is a positively graded toric ideal, we may assume by \cite[Corollary 7.23]{MS} that $D=(d_{ij})\in\NN^{m\times n}$. Furthermore, every column of $D$ is nonzero, since, otherwise, if the $j$-th column is $\bf 0$, then $x_j-1\in I_D$, a contradiction to $I_D$ being positively graded.  We will construct the incidence matrix $A$ of a hypergraph $\MH$, such that its subbouquet ideal is equal to $I_D$. For this define column-maximum entries $\delta_i:=\max\{d_{ji}: \ j=1,\ldots,m\}$ for all $i=1,\ldots,n$ and set $\delta=n+\sum_{i=1}^n \delta_i$. Note that $\delta_i>0$ for every $i=1,\ldots,m$. Moreover, denote by $j_i:=\min\{k: \ d_{ki}=\delta_i\}$ for all $i=1,\ldots,n$ and denote by $l=m-\#\{j_i: 1\leq i\leq n\}$. 

Then the 0-1 matrix $A\in\NN^{(\delta+l)\times \delta}$ consists of several blocks, concatenated vertically, of the following two types: 1) For each $k\in\{j_1,\dots,j_n\}$, and for each column-maximum entry $\delta_i$ located on the $k$-th row of $D$, construct  $\delta_i+1$ rows of $A$ by concatenating (horizontally) $n$ block matrices. Here, the $i$-th block is $\Sigma_{\delta_i+1}$, while for each $l\neq i$ the $l$-th block consists of $\delta_i+1$ copies of the row sub-vector $\varepsilon_{d_{kl},\delta_l+1}$. 2)  For each row index $k\in[m]\setminus \{j_1,\dots,j_n\}$ of $D$, that is, each row not containing any column-maximum entry $\delta_i$, construct a row of $A$ by by concatenating (in this order) the following vectors $$(\varepsilon_{d_{k1},\delta_1+1}| \varepsilon_{d_{k2},\delta_2+1}| \ldots | \varepsilon_{d_{kn},\delta_n+1}).$$

Assume now that the columns of $A$ are labeled ${\bf a}_1,\ldots,{\bf a}_{\delta}$. We prove that the first $\delta_1+1$ column vectors of $A$ belong to the same subbouquet $B_1$, the next $\delta_2+1$ belong to the same subbouquet $B_2$, and so on, until the last $\delta_n+1$ column vectors belong to the same subbouquet $B_n$. We will prove this only for the first $\delta_1+1$ columns of $A$, the other cases being similar. By the definition of $A$ we have that in the submatrix of $A$  determined by the columns $\ab_1,\ldots,\ab_{\delta_1+1}$ there exist integers $i_1,\ldots,i_1+\delta_1=i_2$ such that the matrix corresponding to these rows and the columns $\ab_1,\ldots,\ab_{\delta_1+1}$ is $\Sigma_{\delta_1+1}$. We consider the vectors $\cb_{i,i+1}\in\ZZ^{\delta+l}$ for every $i=i_1,\ldots,i_1+\delta_1-1$, whose only nonzero coordinates are $1$ on position $i$, and $-1$ on position $i+1$. Then the co-vector $(\cb_{i_1,i_1+1}\cdot \ab_1,\ldots,\cb_{i_1,i_1+1}\cdot \ab_{\delta})=(-1,1,0,\ldots,0)$ has support of cardinality two and $G(\ab_{1})=G(\ab_{2})$. Here, we used the fact that in the horizontal block containing $\Sigma_{\delta_1+1}$ (and corresponding to the rows $i_1,\ldots,i_2$) all the entries corresponding to the rows $i_1,i_1+1$ and columns $3,\ldots,\delta$ are identical, by definition. Similarly, using all the vectors $\cb_{i,i+1}$ for $i=i_1,\ldots,i_2-1$, we obtain $G(\ab_1)=\cdots=G(\ab_{\delta_1+1})$, and thus they all belong to the same subbouquet $B_1$, which is either free or non-mixed. In any case, the vector $\cb_{B_1}=({\bf 1},{\bf 0},\ldots,{\bf 0})\in\ZZ^{\delta}$ with ${\bf 1}=(1,\ldots,1)\in\ZZ^{\delta_1+1}$ and $\ab_{B_1}=\ab_1+\cdots+\ab_{\delta_1+1}$. Analogously, we have $\cb_{B_2}=({\bf 0},{\bf 1},\ldots,{\bf 0})$ with ${\bf 1}=(1,\ldots,1)\in\ZZ^{\delta_2+1}$ and $\ab_{B_2}=\ab_{\delta_1+2}+\cdots+\ab_{\delta_1+\delta_2+2}$, and so on, until $\cb_{B_2}=({\bf 0},{\bf 0},\ldots,{\bf 1})$ with ${\bf 1}=(1,\ldots,1)\in\ZZ^{\delta_n+1}$ and $\ab_{B_n}=\ab_{\delta-\delta_n}+\cdots+\ab_{\delta}$. One can easily see from the construction of $A$ that $I_{A_B}=I_D$. In addition, since $I_D\neq 0$ then at least one subbouquet is non-free, and therefore $I_{\MH}=I_A$ is a stable toric ideal. Applying now Theorem~\ref{stable_toric} and \cite[Theorem 3.11]{PTV} we obtain the desired conclusions. \qed       

\end{proof}

\begin{Remark}\label{homological_data}
{\em In conclusion, the previous theorem can be regarded as a polarization-type operation for positively graded toric ideals by comparison to the properties of the classical polarization for monomial ideals, see \cite[Corollary 1.6.3]{HH}. Indeed, if $I_D\subset S=K[x_1,\ldots,x_n]$ and $I_{\MH}\subset T=K[y_1,\ldots,y_{\delta}]$ are the ideals from the above construction since $I_D$ is the subbouquet ideal of $I_{\MH}$ then $\height(I_D)=\height(I_{\MH})$. Applying now \cite[Theorem 3.11]{PTV} we have $\pd(T/I_{\MH})=\pd(S/I_D)$, the $\NN D$-graded Betti numbers of $I_D$ are obtained from the $\NN A_{\MH}$-graded Betti numbers of $I_{\MH}$ and viceversa, and thus $S/{I_D}$ is Cohen-Macaulay (respectively Gorenstein) if and only if $T/I_{\MH}$ is Cohen-Macaulay (respectively Gorenstein).}
\end{Remark}
 
\begin{Remark}\label{robust_conj}
{\em Moreover, combinatorial classification problems of arbitrary positively graded toric ideals whose different sets of bases are equal can be reduced to problems about toric ideals of hypergraphs. For example, in \cite{BBDLMNS} Boocher et.\ al proved that, for robust toric ideals of graphs, the Graver basis is a minimal generating set. In other words, robust toric ideals of graphs are strongly robust. They ask if this property is true in general for \emph{any} robust ideal. To prove such a statement, it is enough to prove it only for toric ideals of hypergraphs. Indeed, if $I_A$ is any robust toric ideal then Theorem~\ref{general_complexity} shows that $I_{\MH}$ is robust. Then if one can prove that robust ideals of hypergraphs are strongly robust then it follows, again from Theorem~\ref{general_complexity}, that $I_A$ is also strongly robust.}
\end{Remark}

\end{document}